% SIAM Article Template
\documentclass[onefignum,onetabnum]{siamart190516}

% Information that is shared between the article and the supplement
% (title and author information, macros, packages, etc.) goes into
% ex_shared.tex. If there is no supplement, this file can be included
% directly.

% SIAM Shared Information Template
% This is information that is shared between the main document and any
% supplement. If no supplement is required, then this information can
% be included directly in the main document.

% Packages and macros go here
\usepackage{graphicx}
\graphicspath{{../}}
\usepackage{epstopdf}
\ifpdf
  \DeclareGraphicsExtensions{.eps,.pdf,.png,.jpg}
\else
  \DeclareGraphicsExtensions{.eps}
\fi

\usepackage{xr}

\makeatletter
\newcommand*{\addFileDependency}[1]{% argument=file name and extension
  \typeout{(#1)}
  \@addtofilelist{#1}
  \IfFileExists{#1}{}{\typeout{No file #1.}}
}
\makeatother

% Add a serial/Oxford comma by default.

% Used for creating new theorem and remark environments
\newsiamremark{remark}{Remark}
\newsiamremark{hypothesis}{Hypothesis}
\crefname{hypothesis}{Hypothesis}{Hypotheses}
\newsiamthm{claim}{Claim}

\newsiamthm{assumption}{Assumption}

% Sets running headers as well as PDF title and authors
\headers{ADFS}{H. Hendrikx, F. Bach and L. Massouli\'e}

\title{An Optimal Algorithm for Decentralized\\
Finite Sum Optimization
}

% Authors: full names plus addresses.
\author{Hadrien Hendrikx\thanks{INRIA - DIENS - PSL Research University 
  (\email{hadrien.hendrikx@inria.fr}, \email{francis.bach@inria.fr}, \email{laurent.massoulie@inria.fr})} %, \url{http://www.imag.com/\string~ddoe/}).}
\and Francis Bach\footnotemark[1]
\and Laurent Massouli\'e\footnotemark[1]} %\footnotemark[3]}

\usepackage{amsopn}

\usepackage{microtype}
\usepackage{subcaption}
\usepackage{booktabs}
\usepackage{dsfont}
\usepackage{xcolor}
% \usepackage{natbib}
% Dirty hack
\newcommand{\citep}{\cite}
\newcommand{\citet}{\cite}

\usepackage{algorithm}
\usepackage{algorithmic}

\usepackage[mathcal]{eucal}

\usepackage[utf8]{inputenc}

\usepackage{amssymb}
\usepackage{amsfonts}

\usepackage{amsmath}
\usepackage{dsfont}

\newcommand{\esp}[1]{\mathbb{E}\left[ #1 \right]}
\newcommand{\grando}[1]{O\left(#1 \right)}
\newcommand{\R}{\mathbb{R}}
\newcommand{\adfs}{ADFS}
\newcommand{\esdacd}{ESDACD}
\newcommand{\msda}{MSDA}
\newcommand{\ndim}{{\tilde{d}}}
\newcommand{\Ker}{{\rm Ker}}
\newcommand{\pcomm}{p_{\rm comm}}
\newcommand{\pcomp}{p_{\rm comp}}

\newcommand{\comm}{{\rm comm}}
\newcommand{\comp}{{\rm comp}}
\newcommand{\prox}{{\rm prox}}
\newcommand{\poso}{{{\rm pos}_0}}

\newcommand{\cM}{\mathcal{M}}
\newcommand{\cN}{\mathcal{N}}

% Optional PDF information
\ifpdf
\hypersetup{
  pdftitle={An Optimal Algorithm for Decentralized Finite Sum Optimization},
  pdfauthor={H. Hendrikx, F. Bach and L. Massouli\'e}
}
\fi

\begin{document}

\maketitle

\begin{abstract}
Modern large-scale finite-sum optimization relies on two key aspects: distribution and stochastic updates. For smooth and strongly convex problems, existing decentralized algorithms are slower than modern accelerated variance-reduced stochastic algorithms when run on a single machine, and are therefore not efficient. Centralized algorithms are fast, but their scaling is limited by global aggregation steps that result in communication bottlenecks. In this work, we propose an efficient \textbf{A}ccelerated \textbf{D}ecentralized stochastic algorithm for \textbf{F}inite \textbf{S}ums named ADFS, which uses local stochastic proximal updates and decentralized communications between nodes. On $n$ machines, ADFS minimizes the objective function with $nm$ samples in the same time it takes optimal algorithms to optimize from $m$ samples on one machine. This scaling holds until a critical network size is reached, which depends on communication delays, on the number of samples $m$, and on the network topology. We give a lower bound of complexity to show that ADFS is optimal among decentralized algorithms. To derive ADFS, we first develop an extension of the accelerated proximal coordinate gradient algorithm to arbitrary sampling. Then, we apply this coordinate descent algorithm to a well-chosen dual problem based on an augmented graph approach, leading to the general ADFS algorithm. We illustrate the improvement of ADFS over state-of-the-art decentralized approaches with experiments.
\end{abstract}

\section{Introduction}

The success of machine learning models is mainly due to their capacity to train on huge amounts of data. Distributed systems can be used to process more data than one computer can store or to increase the pace at which models are trained by splitting the work among many computing nodes. In this work, we focus on problems of the form:\vspace{-3pt}
\begin{equation}
\label{eq:distributed_problem}
    \min_{\theta \in \mathbb{R}^d}\ \  \sum_{i=1}^n f_i(\theta),\ \ \ \ \mbox{ where } \ \ \ \  f_i(\theta) = \sum_{j=1}^m f_{i,j}(\theta) +  \frac{\sigma_i}{2}\| \theta \|^2 . \! \! \! \! \! \! \! \! \!
\end{equation}
This is the typical $\ell_2$-regularized empirical risk minimization problem with $n$ computing nodes that have $m$ local training examples each. The function $f_{i,j}$ represents the loss function for the $j$-th training example of node $i$ and is assumed to be convex and $L_{i,j}$-smooth~\citep{nesterov2013introductory, bubeck2015convex}. This kind of problems also arise in other applications, such as distributed resource allocation~\citep{xiao2006optimal} or distributed power control~\citep{molzahn2017survey}.

These problems are usually solved by first-order methods, and the basic distributed algorithms compute gradients in parallel over several machines~\citep{nedic2009distributed}. Another way to speed up training is to use \emph{stochastic} algorithms \citep{bottou2010large, defazio2014saga, johnson2013accelerating}, that take advantage of the finite sum structure of the problem to use cheaper iterations while preserving fast convergence. Lower bounds with matching optimal algorithms exist separately in both the finite-sum~\citep{lan2017optimal} and the distributed setting~\citep{scaman2017optimal}. This paper aims at bridging the gap between these two lines of work when local functions are smooth and strongly convex. In particular, we give lower complexity bounds for the distributed finite-sum setting, as well as ADFS, an algorithm that matches these bounds. Our contributions are the following, ordered by appearance in the paper:
\begin{enumerate}
    \item Tight lower complexity bounds. We recover as special cases the bounds from~\citet{scaman2018optimal} when $m=1$ (local functions are not finite sums), and the bounds from~\citet{lan2017optimal} when $n=1$ (there is only one machine).
    \item Generalization of the Accelerated Proximal Coordinate Gradient algorithm~\citep{lin2015accelerated, fercoq2015accelerated} to arbitrary sampling of blocks and strong convexity in a subspace.
    \item ADFS, a decentralized stochastic algorithm that matches our lower complexity bounds, and recovers the rate of MSDA~\citep{scaman2017optimal} when $m=1$ and the rate of optimal single machine stochastic algorithms~\citep{lin2015accelerated, defazio2016simple} when $n=1$. 
\end{enumerate}

The present paper is an extended journal version of the conference paper that introduces ADFS~\cite{hendrikx2019accelerated}. In particular, this paper presents a more flexible version of ADFS that can use synchronous rounds instead of local operations with global scheduling, and the contributions listed above were not present in the original work. We discuss the differences with the conference paper more in details later in the article. We now precisely define our setting, and discuss relevant related work. 

\section{Model and notations}
\subsection{Optimization problem}
 In the rest of this paper, following Scaman et al.~\citep{scaman2017optimal}, we assume that:
 
 \begin{itemize}
\item Each node (computing unit) $i \in \{1, ..., n\}$ can compute first-order characteristics, such as the gradient of its own functions $\nabla f_{i,j}$, the gradient of the Fenchel conjugate of its local function $\nabla f_i^*$, or the proximal operator of its own functions
\begin{equation}
     \label{eq:prox_def}
     \prox_{\eta f_{i,j}}(x) = \arg \min_v \frac{1}{2\eta}\|v - x\|^2 + f_{i,j}(v) \ \text{ for } x \in \R^d.
     \end{equation}
We assume that computing first-order characteristics for one function $f_{i,j}$ takes time $1$, and so that computing them for the function $f_i$ takes time $m$. This hides the fact that computing the proximal operator of a function is generally significantly more expensive than computing its gradient. Yet, this enables easier comparison between methods, and the difference between computing the proximal operator compared to the gradient of a single function is only of a constant factor in the case of generalized linear models such as least-squares or logistic regression.  
     \item Nodes are linked by a communication network and can only exchange messages (\emph{i.e.}, vectors in $\R^d$) with their neighbours. We assume that communications take time $\tau$. There are at this point no other restrictions on the communications, that can happen asynchronously and in parallel.
 \end{itemize}

Following notations from~\citet{xiao2017dscovr}, we define the batch condition number $\kappa_b$, which is a classical quantity in optimization, such that for all $i$, 
\begin{equation}
    \kappa_b \geq L_b(i) / \sigma_i \hbox{ where } \forall x, \ L_b(i) \geq \lambda_{\max}(\nabla^2 f_i(x)).
\end{equation}
Similarly, we define the stochastic condition number $\kappa_s$, which is a classical quantity in the analysis of finite sum opitmization problems, such that 
\begin{equation}
    \kappa_s \geq 1 + \frac{1}{\sigma_i}\sum_{j=1}^m L_{i,j} \hbox{ where } \forall x, \ L_{i,j} \geq \lambda_{\max}(\nabla^2 f_{ij}(x)).
\end{equation}
The iteration complexity of batch optimization methods such as gradient descent is proportional to $\kappa_b$, but they need to evaluate a full gradient (\emph{i.e.} $m$ individual gradients) at each step. On the other hand, the iteration complexity of stochastic variance-reduced algorithms~\citep{johnson2013accelerating, defazio2014saga, shalev2013stochastic} depends on $m + \kappa_s$, but only use one individual gradient at each iteration. Therefore, stochastic variance-reduced methods improve over batch methods by replacing their $O(m\kappa_b)$ time complexity by $O(m + \kappa_s)$. Yet, since $\nabla^2 f_i(x) = \sigma_i I_d + \nabla^2 f_{ij}(x) \leq (\sigma_i + \sum_{j=1}^m L_{i,j})I_d$ then we directly obtain that $L_b(i) \leq \sigma_i + \sum_{i=1}^m L_{i,j}$, and so $\kappa_b \leq \kappa_s$. Similarly, $\nabla^2 f_i(x) \geq \nabla^2 f_{ij}(x)$ and so $L_{i,j} \leq L_b(i)$ for all $j$. Therefore, we always have 
\begin{equation}\label{eq:kappa_b_kappa_s}
    (m + 1) \kappa_b \geq \kappa_s \geq \kappa_b.    
\end{equation}
This means that finite-sum methods gain nothing in the worst case, in which all $f_{i,j}$ are independent. However, the practical superiority of these methods suggests that $\kappa_s << m \kappa_b$ in many applications since samples are often correlated correlated. 

The upper bound on $\kappa_s$ is tight when $\max_x \lambda_{\max}(\nabla^2 f_i(x)) = \max_x \lambda_{\max}(\nabla^2 f_{ij}(x))$. Equality happens in particular in the extreme case in which all $\nabla^2 f_{ij}(x)$ are orthogonal, meaning that the sum is separable and optimization can be performed separately for each function. Considering least squares regression problems is also convenient to understand the difference between $\kappa_s$ and $\kappa_b$ more in details. In particular, if we denote by $C \in \R^{m \times m}$ the covariance matrix of the data, then $\kappa_b = \lambda_{\max}(C)$ the largest eigenvalue of $C$ and $\kappa_s = {\rm Tr}(C)$. In this case, it is clear that $\kappa_s << m \kappa_b$ unless the covariance matrix is close to isotropic. In summary, our goal is to replace the $m\kappa_b$ computational time factors by $m + \kappa_s$. Whether this improves the global time complexity depends on the structure of the problem (and thus of the data) but this is generally verified in practice. 

\subsection{Decentralized Communications}
The focus of this paper is on the \emph{decentralized} setting. In this case, \emph{gossip} algorithms~\citep{boyd2006randomized,nedic2009distributed,shi2015extra,nedic2017achieving} are generally used. Gossip communication steps consist in averaging gradients or parameters with neighbours, and can thus be abstracted as multiplication by a so-called gossip matrix $W$, which is an $n \times n$ symmetric positive semi-definite matrix such that ${\rm Ker}(W) = {\rm Span}(\mathds{1})$ where~$\mathds{1}$ is the constant vector of all ones, and ${\rm Span}(\mathds{1})$ denotes the vector space spanned by this vector.
Besides, $W$ is defined on the edges of the network, meaning that $W_{k\ell} = 0$ if $\ell \neq k$ and $\ell \notin \mathcal{N}(k)$, the set of the neighbours of node $k$.

A simple choice of gossip matrix is $L$, the Laplacian matrix of the graph, which is such that $L_{k \ell} = {\rm degree}(k)$ if $k = \ell$, $L_{k\ell} = 1$ if $k \in \mathcal{N}(\ell)$ and $L_{k \ell} = 0$ otherwise. We denote $\lambda_{\min}^+(W)$ the smallest non-zero eigenvalue of the matrix $W$, and the \emph{eigengap} of the gossip matrix (also called spectral gap) is defined as $\gamma = \lambda_{\min}^+(W) / \lambda_{\max}(W)$. This natural constant appears in the running time of many decentralized algorithms, and $\gamma^{-1/2}$ is often close to the diameter of the graph. For instance, $\gamma^{-1/2} = 1$ for the complete graph, $\gamma^{-1/2} = 2n / \pi$ for linear graphs and $\gamma^{-1/2} = O(\sqrt{n})$ for the 2D grid. More generally, $\gamma^{-1/2} \geq \frac{\Delta}{2 \sqrt{2} \ln_2(n)}$ for regular networks~\citep{alon1985lambda1}.

\section{Related work}

\begin{table*}[t]
\centering 
\begin{small}
\begin{sc}
\begin{tabular}{lcccc}
\toprule
Algorithm & Synchrony & Stochastic & Time \\
\midrule
Point-SAGA~\citep{defazio2016simple}  & N/A & \checkmark & $nm + \sqrt{n m\kappa_s}$\\
MSDA~\citep{scaman2017optimal} & Global & $\times$ & $\sqrt{\kappa_b}\left(m + \frac{\tau}{\sqrt{\gamma}}\right)$\\
ESDACD~\citep{hendrikx2018accelerated}  & Local & $\times$ & $\left(m + \tau \right)\sqrt{\frac{\kappa_b}{\gamma}}$\\
DSBA~\citep{shen2018towards}  & Global & \checkmark & $\left(m + \kappa_s + \gamma^{-1}\right) \left(1 + \tau\right)$\\
ADFS-Asynch~\citep{hendrikx2019accelerated} & Local & \checkmark & $m + \sqrt{m\kappa_s} + (1 + \tau)\sqrt{ \frac{\kappa_s}{\gamma}}$\\
ADFS-Synch (This paper) & Global & \checkmark & $m + \sqrt{m\kappa_s} + \tau \sqrt{ \frac{\kappa_\comm}{\gamma}}$\\
\bottomrule
\end{tabular}
\end{sc}
\end{small}
\caption{Comparison of various state-of-the-art decentralized algorithms to reach accuracy $\varepsilon$ in regular graphs. Constant factors are omitted, as well as the $\log\left(\varepsilon^{-1}\right)$ factor in the \emph{\textsc{Time}} column. The reported runtime for Point-SAGA corresponds to running it on a single machine with $n m$ samples. To allow for direct comparison, we assume that computing a dual gradient of a function $f_i$ as required by MSDA and ESDACD takes time $m$, although it is generally more expensive than to compute $m$ separate proximal operators of single $f_{i,j}$ functions. Rates reported are for a homogeneous setting, \emph{i.e.}, when all nodes have the same strong convexity parameter. For generalized linear models such as logistic regression, the $\kappa_\comm$ term in the rate of ADFS-Synch is defined in Lemma~\ref{lemma:rate_adfs} and is of order $\kappa_\comm = O(\kappa_b)$.} 
\label{fig:table_speeds}
\vskip -0.1in
\end{table*}
 The next paragraphs discuss the relevant state of the art for both distributed and stochastic methods, and Table~\ref{fig:table_speeds} sums up the speeds of the main decentralized algorithms available to solve Problem~\eqref{eq:distributed_problem}. Although it is not a distributed algorithm, Point-SAGA~\citep{defazio2016simple}, an optimal single-machine algorithm, is also presented for comparison.\vspace{3pt}

\paragraph{Centralized gradient methods}
A simple way to split work between nodes is to distribute gradient computations and to aggregate them on a parameter server. Provided the network is fast enough, this allows the system to learn from the datasets of~$n$ workers in the same time one worker would need to learn from its own dataset. Yet, these approaches are very sensitive to stochastic delays, slow nodes, and communication bottlenecks. Asynchronous methods may be used~\citep{recht2011hogwild, leblond2016asaga, xiao2017dscovr} to address the first two issues, but computing gradients on older (or even inconsistent) versions of the parameter harms convergence~\citep{chen2016revisiting}. Therefore, this paper focuses on decentralized algorithms, which are generally less sensitive to communication bottlenecks~\citep{lian2017can}.\vspace{3pt}

\paragraph{Decentralized gradient methods} In their synchronous versions, decentralized algorithms alternate rounds of computations (in which all nodes compute gradients with respect to their local data) and communications, in which nodes exchange information with their direct neighbors~\citep{duchi2012dual,shi2015extra, nedic2017achieving, tang2018d}. MSDA~\citep{scaman2017optimal} is a batch decentralized synchronous algorithm, and it is optimal with respect to the constants $\gamma$ and $\kappa_b$, among batch algorithms that can only perform these two operations. Instead of performing global synchronous updates, some approaches inspired from gossip algorithms~\citep{boyd2006randomized} use randomized pairwise communications~\citep{nedic2009distributed, johansson2009randomized, colin2016gossip}. This for example allows fast nodes to perform more updates in order to benefit from their increased computing power. These randomized algorithms do not suffer from the usual worst-case analyses of bounded-delay asynchronous algorithms, and can thus have fast rates because the step-size does not need to be reduced in the presence of delays. For example, ESDACD~\citep{hendrikx2018accelerated} achieves the same optimal speed as MSDA when batch computations are faster than communications ($\tau > m$). However, both algorithms are obtained using a dual approach~\citep{uribe2020dual}. Therefore, they require gradients of the Fenchel conjugates of the full local functions, which are generally much harder to get than regular gradients.\vspace{3pt}

\paragraph{Stochastic algorithms for finite sums}
All distributed methods presented earlier are \emph{batch} methods that rely on computing \emph{full gradient} steps of each function $f_i$. Stochastic methods perform updates based on randomly chosen functions $f_{i,j}$. In the smooth and strongly convex setting, they can be coupled with \emph{variance reduction}~\citep{schmidt2017minimizing,shalev2013stochastic, johnson2013accelerating, defazio2014saga} and \emph{acceleration}, to achieve the $m + \sqrt{m\kappa_s}$ optimal finite-sum rate, which significantly improves over the $m\sqrt{\kappa_b}$ batch optimum when the dataset is large. Examples of such methods include Accelerated-SDCA~\citep{shalev2014accelerated}, APCG~\citep{lin2015accelerated}, Point-SAGA~\citep{defazio2016simple} or Katyusha~\citep{allen2017katyusha}.\vspace{3pt}

\paragraph{Decentralized stochastic methods} In the smooth and strongly convex setting, DSA~\citep{mokhtari2016dsa} and later DSBA~\citep{shen2018towards} are two linearly converging stochastic decentralized algorithms. DSBA uses the proximal operator of individual functions $f_{i,j}$ to significantly improve over DSA in terms of rates. Yet, DSBA does not enjoy the $\sqrt{m\kappa_s}$ accelerated rate, and needs an excellent network with very fast communications. Indeed, nodes need to communicate each time they process a single sample, resulting in many communication steps. Other approaches based on SGD exist~\citep{koloskova2019decentralized}, but they do not use variance reduction and thus do not converge linearly. Therefore, to the best of our knowledge, there is no decentralized stochastic algorithm with accelerated linear convergence rate or low communication complexity without sparsity assumptions (\emph{i.e.}, sparse features in linear supervised learning).\vspace{3pt}

\paragraph{ADFS} The main contribution of this paper is a locally synchronous \textbf{A}ccelerated \textbf{D}ecentralized stochastic algorithm for \textbf{F}inite \textbf{S}ums, named ADFS. It reduces to APCG for empirical risk minimization~\citep{lin2015accelerated} in the limit case $n=1$ (single machine), and therefore then has a $m + \sqrt{m\kappa_s}$ convergence rate. Besides, this rate stays unchanged when the number of machines grows, meaning that ADFS can process $n$ times more data than APCG in the same amount of time on a network of size $n$. This scaling lasts as long as $\tau\sqrt{\kappa_\comm} \gamma^{-\frac{1}{2}} < m + \sqrt{m \kappa_s}$, meaning that the number of nodes can be arbitrarily large as long as delays are small enough. Therefore, \adfs~outperforms both \msda~and DSBA, combining optimal network scaling with the efficient distribution of optimal sequential finite-sum algorithms. Note however that, similarly to DSBA and Point-SAGA, ADFS requires evaluating ${\rm prox}_{f_{i,j}}$, which requires solving a local optimization problem. Yet, in the case of linear models such as logistic regression, it is only a constant factor slower than computing $\nabla f_{i,j}$, and it is especially much faster than computing the gradient of the conjugate of the full dual functions $\nabla f_i^*$ required by ESDACD and MSDA.
\vspace{3pt}

\paragraph{Improvements over the conference paper} This paper is based on the ADFS conference paper~\citep{hendrikx2019accelerated}. Yet, it is not a strict extension, and some parts have been removed in order to ease the reading and focus on contributions more related to optimization. In particular, the locally synchronous aspect of ADFS has been dropped in favor of standard synchronous gossip, which allows to remove the sections about time and scheduling. This paper is based on arguments that are similar to the ones used in the conference paper, but it presents new and stronger results.
First of all, we introduce a lower bound that was not present in the conference paper. Then, we extend the accelerated proximal coordinate descent algorithm, which is the algorithmic core of ADFS, to work with blocks of coordinates. This allows to present a synchronous version of ADFS, which is both simpler and faster when communication and computation delays are homogeneous. Furthermore, we introduce the constant $\kappa_\comm$, which captures the impact of the relationship between the topology of the graph and the regularity of local functions on the iteration complexity of ADFS. This allows us to obtain tight results on the communication complexity of ADFS and show that ADFS is actually optimal since it matches the lower bound. Note that the locally version of ADFS from the conference paper did not enjoy optimal runtime because of scheduling issues and a looser analysis. Therefore, and although they build on the same ideas as the conference paper, all results presented in this paper are novel and contribute to building a much more consistent theory.\vspace{3pt}

The first contribution of this paper is a lower bound for distributed finite sum optimization, presented in Section~\ref{sec:lower_bound}. Then, we introduce in Section~\ref{sec:apcg_main} our second contribution, a generalization of APCG that works with arbitrary sampling of blocks of coordinates. Our last contribution is ADFS, obtained by applying the previous APCG algorithm to a novel augmented graph approach formulation presented in Section~\ref{sec:model}. The generic ADFS algorithm is presented in Section~\ref{sec:alg}. Finally, Section~\ref{sec:perfs} presents a relevant choice of parameters leading to the rates shown in  Table~\ref{fig:table_speeds}, and an experimental comparison is done in Section~\ref{sec:experiments}. A Python implementation of ADFS is also provided in supplementary material.

\section{Optimal rates}
\label{sec:lower_bound}
Many of the algorithms discussed in the previous sections are proven to be optimal in specific settings. In particular, APCG (when applied to the dual of empirical risk minimization problems) and Point-SAGA are proven to be optimal among single-machine algorithms to solve finite-sum problems~\citep{lan2017optimal}. Similarly, MSDA is optimal among batch decentralized algorithms~\citep{scaman2017optimal}. Although other optimality results have recently been proven when removing the strong convexity and smoothness assumptions in the distributed setting~\citep{scaman2018optimal}, there is, to the best of our knowledge, no lower bound for distributed optimization when local functions are themselves finite sums. We fill this gap in this section by extending the decentralized lower bound of~\citet{scaman2017optimal} to the finite sum setting, using worst-case functions inspired from the single-machine finite-sum lower bound~\citet{lan2017optimal}.

\subsection{Black Box Model}
\label{sec:black_box_model}
The notion of black-box optimization procedure that we use is largely based on~\citet{scaman2018optimal}. The main difference is that nodes have many local functions but they only choose one (possibly at random) at each step to perform their update. More specifically, we consider distributed algorithms that respect:
\begin{enumerate}
  \item \textbf{Local memory:} each node $i$ can store past values in an internal memory $\cM_{i,t} \subset \R^d$ at time $t \geq 0$. The values in this local memory can come either from local computation or communication, so that for all $i \in \{1, \cdots n\}$, $\cM_{i,t} \subset \cM_{i,t}^\comm \cup \cM_{i,t}^\comp$.

  \item \textbf{Local computation:} each node can, at time $t$, compute $\nabla f_{i, \zeta_t}(\theta)$, $\nabla f_{i, \zeta_t}^*(\theta)$ and $\prox_{\eta f_{i, \zeta_t}}(\theta)$ for some $\eta > 0$, where $\zeta_t \in \{1, \cdots, m\}$ is fixed for a given $t$ (but may be chosen by the algorithm). This means that
  $$ \cM_{i,t}^\comp = {\rm Span}\left(\big\{\theta, \nabla f_{i, \zeta_t}(\theta), \nabla f_{i, \zeta_t}^*(\theta), \prox_{\eta f_{i, \zeta_t}}(\theta): \theta \in \cM_{i, t-1}\big\},  \eta \geq 0 \right).$$

  \item \textbf{Local communication:} each node can, at time $t$, share a value to its neighbours so that for all $i \in \{1, \cdots, n\}$,
$$\cM_{i,t}^\comm = {\rm Span}\left(\cup_{j \in \cN(i)} \cM_{j, t-\tau}\right).$$

  \item \textbf{Output value:} each node $i$ must, at time  specify one vector in its memory as local output of the algorithm, that is, for all $i \in \{1, \cdots, n\}$, $\theta_{i,t} \in \cM_{i,t}$.
\end{enumerate}

\vspace{5pt}The main difference with the definition from~\citet{scaman2018optimal} is that at each step, the first order characteristics are only computed for one summand (the one with index $\zeta_t$) of the local finite sum of node $i$.

\subsection{Lower bounds}
\label{sec:lower_bound_subsec}
We first present a general lower bound for the distributed optimization setting. More specifically, we show that for any black-box optimization procedure, at least $\Omega((m + \sqrt{m\kappa_s})\log(1 / \varepsilon))$ computation steps and $\Omega(\tau \sqrt{\kappa_\ell / \gamma} \log(1 / \varepsilon))$ communication steps are needed. This lower bound is not surprising since it is similar to that of~\citet{scaman2017optimal}, but the lower bound on the computation cost is replaced by the standard finite-sum lower-bound for the computation cost~\citep{lan2017optimal}. Theorem~\ref{thm:lb_general} shows that the lower bound for both communications and computations can be achieved by the same function. Lower bound proofs for first-order methods usually rely on the fact that in the work case, the algorithms can make progress in at most one dimension per oracle call~\citep{nesterov2013introductory}. This means that the lower bounds are valid only for a number of iterations~$t$ that depends on the dimension of the problem. In order to avoid this dependency, we prove a result in $\ell_2$, the space of square summable sequences. Yet, a similar result with a similar proof would hold in $\R^d$.

\begin{theorem}
\label{thm:lb_general}
Let $\mathcal{G}$ be a graph of size $n > 0$ and diameter $\Delta$, and $\kappa_\ell > 0$. There exist $n \times m$ functions $f_{i,j}: \ell_2 \rightarrow \R$ such that each $f_{i,j}$ is convex and $L_{i,j}$-smooth, $f_i \hat{=}\sum_{j=1}^m f_{i,j}$ is $L_i$-smooth and $\sigma_i$-strongly convex with $L_{i,j}$ and $\sigma_i$ such that $\kappa_\ell \geq L_i / \sigma_i \geq L_{i,j} / \sigma_{i}$ for all $i,j$, and such that if $f_i$ is the local function of node $i$ then for any $t \geq 0$ and black-box procedure that generates and output $\theta^t$ such that $(\theta^t)_i \in \ell_2$ is the output value of node $i$ at time $t$, one has:
\begin{align*}
    2 \frac{1 - q^2}{1 - q} \esp{\frac{\|\theta^t - \theta^*\|^2}{\|\theta^0 - \theta^*\|^2}} \geq
  \left(1 - \frac{2m}{m + \sqrt{m\kappa_s / 3})}\right)^{\frac{4\lceil t \rceil}{m}} + \left(1 - \frac{2}{1 + \sqrt{\kappa_\ell / 3}}\right)^{2 + \frac{2\lceil t\rceil}{\Delta\tau}}.
\end{align*}
where $\kappa_s \geq \sum_{j=1}^m L_{i,j} / \sigma_i$, $q = \frac{\sqrt{\kappa_\ell / 3} - 1}{\sqrt{\kappa_\ell / 3} + 1}$, and $\theta^* = \arg \min_\theta \sum_{i=1}^n f_i(\theta)$.
\end{theorem}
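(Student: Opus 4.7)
My plan is to construct a single family of worst-case functions realizing both lower bounds simultaneously, then argue that the expected squared error is bounded below by the sum of two separately derived terms. I would work in $\ell_2$ so the bound is dimension-free, following the Scaman et al. convention, and split the coordinates of $\ell_2$ into two orthogonal blocks: one "communication block" on which a Scaman-style tridiagonal quadratic lives and is distributed so that half of its coefficients are accessible only at one end of the graph and the other half only at the other end, and one "finite-sum block" on which a Lan--Zhou-style decomposition into $m$ summands lives and is localized at a single node. Because the two blocks are orthogonal, the global minimizer decomposes as $\theta^\star=(\theta^\star_\comm,\theta^\star_\comp)$, the squared error decomposes as well, and any black-box procedure satisfying the memory / computation / communication rules of Section~\ref{sec:black_box_model} is in particular a valid procedure on each block separately.

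Concretely, on the communication block I would use the tridiagonal operator $M=\sigma I+(L_i-\sigma)T$ with $T$ the shift-based symmetric tridiagonal, scaled so that its condition number equals $\kappa_\ell/3$, and split it as $f_1+f_n$ exactly as in Scaman--Bach--Massoulié so that any memory dimension in the "odd/even" subspace can only grow when a message has crossed the graph; one oracle call contributes at most one new dimension and each such dimension costs $\Delta\tau$ in time. Kicking off the standard Chebyshev-type argument then yields a lower bound of the form $(1-2/(1+\sqrt{\kappa_\ell/3}))^{2+2\lceil t\rceil/(\Delta\tau)}$ on the normalized squared error restricted to the communication block, with the extra "$+2$" coming from the two free dimensions one always gets from the constant term. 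On the finite-sum block, localized at a single node, I would use the Lan--Zhou construction: a single strongly convex quadratic $f_i$ with condition number $\kappa_s/3$, written as $f_i=\sum_{j=1}^m f_{i,j}$ where each $f_{i,j}$ acts non-trivially only on coordinates congruent to $j\pmod m$, so that each oracle call on one summand can reveal progress in at most one coordinate of that residue class; summing over classes gives an expected lower bound of $(1-2m/(m+\sqrt{m\kappa_s/3}))^{4\lceil t\rceil/m}$, the factor $4\lceil t\rceil/m$ accounting for the fact that an algorithm spending $t$ computation steps touches each residue class on average $t/m$ times. Both contraction rates are exactly of the form $((\sqrt\kappa-1)/(\sqrt\kappa+1))^{\cdot}$ familiar from Nesterov-type lower bounds, which is what produces the $2(1-q^2)/(1-q)=2(1+q)$ prefactor when one passes from the progress statement $\|\theta^t-\Pi_k\theta^0\|^2$ to $\|\theta^t-\theta^\star\|^2$ via the classical Nesterov identity $\|\theta^\star-\Pi_k\theta^0\|^2=q^{2k}/(1+q)\,\|\theta^0-\theta^\star\|^2$.

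The final step is to add the two bounds. Because the two blocks are orthogonal, $\esp{\|\theta^t-\theta^\star\|^2}/\|\theta^0-\theta^\star\|^2$ is at least the weighted average of the two normalized per-block errors; by choosing $\|\theta^0_\comm-\theta^\star_\comm\|=\|\theta^0_\comp-\theta^\star_\comp\|$ the weights are equal and, after absorbing the $1/2$ into the $2(1+q)$ prefactor on the left-hand side, one recovers the stated inequality. The main obstacle I anticipate is bookkeeping rather than conceptual: verifying that the two constructions remain compatible under the arbitrary-sampling oracle (so that a single call to $\nabla f_{i,\zeta_t}$, $\nabla f_{i,\zeta_t}^\star$ or $\prox_{\eta f_{i,\zeta_t}}$ really does expand the accessible subspace by at most one coordinate of the appropriate residue class), and that the proximal and conjugate oracles do not shortcut the Lan--Zhou lower bound; this can be handled by checking that the worst-case quadratic is invariant under all three oracles in the sense that they map span$(e_1,\ldots,e_k)$ into span$(e_1,\ldots,e_{k+1})$ of the relevant subspace, which is the standard verification in the finite-sum lower-bound literature.
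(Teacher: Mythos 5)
Your plan takes a genuinely different route from the paper, and the paper's unified construction sidesteps the main obstacle your two-block split creates. Rather than placing a Scaman-type communication-hard quadratic and a Lan--Zhou finite-sum function on two \emph{orthogonal} coordinate blocks, the paper puts the \emph{same} communication-hard function $f_i^Q$ on every one of $m$ orthogonal copies, setting $f_{i,j}(x)=f_i^Q(x_j)$ for $x\in(\ell_2)^m$. Both terms in the stated bound are then read off this one family via two extreme regimes: with instant computation one gets the communication exponent $2+2\lceil t\rceil/(\Delta\tau)$, and with instant communication one gets the computation exponent $4\lceil t\rceil/m$, after which the maximum of the two contractions is lower-bounded by their average. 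That averaging step, not a Nesterov identity, is what produces the factor $2$ in $2\tfrac{1-q^2}{1-q}$; the remaining $\tfrac{1-q^2}{1-q}=1+q$ is just the ratio of the tail geometric sum to the full one. A further benefit of the unified construction is that the same $q$ shows up in both exponents automatically; in your split you would have to engineer this by tying the per-class conditioning on the finite-sum block to the conditioning on the communication block, i.e.\ impose $\kappa_s\approx m\kappa_\ell$, which the paper gets for free since the $m$ summands are copies of the communication-hard function.

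The genuine gap in your proposal is the conjugate oracle. If the finite-sum block carries non-trivial functions only at one node, then for every other node $i$ the summand $f_{i,j}$ is constant along that block, its Fenchel conjugate $f_{i,j}^*$ equals $+\infty$ as soon as its argument has a non-zero finite-sum-block component, and the oracle $\nabla f_{i,\zeta_t}^*$ allowed by the black-box model of Section~\ref{sec:black_box_model} is undefined at exactly the iterates you need to control. Padding the other nodes' summands with a small quadratic on that block repairs well-definedness but either destroys the requirement $\kappa_\ell\geq L_i/\sigma_i$ (for small padding $\sigma_i$ collapses while $L_i$ does not) or, for larger padding, shifts $\theta^\star$ and changes the effective per-class conditioning. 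The paper never faces this dilemma: every $f_{i,j}$ already carries the shared $\tfrac{\sigma}{3}\|y\|^2$ regularizer, so every conjugate is finite and every node's conditioning is pinned to the same $L/\sigma$ by construction. If you want to salvage the two-block route, you would need matched quadratics on both blocks at every node, the relation $\kappa_s\approx m\kappa_\ell$ imposed explicitly, and a re-derivation of the minimizer's geometric decay on each block so that a single $q$ survives in the final inequality.
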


\begin{proof}
The proof relies on choosing particular functions that are hard to optimize locally and that require communication. Hard functions $f_i$ are chosen similar to that of~\citet{scaman2017optimal}, so that only a small set a of nodes can actually make progress towards the optimum at a given point in time, meaning that parallelism is very restricted. Then, $f_{i,j}$ are chosen such that $f_{i,j}(x) = f_i(e_j^\top x)$ for $x \in (\ell_2)^m$, so that progress along one $j \in \{1, \cdots, m\}$ does not result in progress along the other dimensions, as in~\citet{lan2017optimal}. The result is stated with $\ell_2$ instead of $(\ell_2)^m$ because if $m$ is finite and $x \in (\ell_2)^m$, then if $\theta$ is such that $\theta_{km + i} = (x_i)_k$ for $i \in \{1, \cdots, m\}$ and $k \in \mathbb{N}$ then $\theta \in \ell_2$.

Two extreme regimes are then considered, \emph{i.e.}, when communications are instant ($\tau = 0$) and when computations are instant ($\tau = \infty$). In the first case, very few nodes make progress at a given point in time so there is almost no parallelism and the complexity is the same as that of one node optimizing its own function. In the second case, the stochastic gradient aspect does not matter and the time taken by the algorithm is lower bounded by the time required for the information to go back and forth between the nodes that can actually make progress. The complete proof can be found in Appendix~\ref{app:lower_bound}.
\end{proof}

This bound can be further simplified into the asymptotic expression below:
\begin{corollary}[Centralized lower bound] \label{corr:centralized_lb}
Under the assumptions of Theorem~\ref{thm:lb_general}, there exist functions such that for any black-box procedure, the time to reach a precision $\varepsilon$ is lower bounded by: 
$$\Omega\left(\left[m + \sqrt{m\kappa_s} + \tau \Delta \sqrt{\kappa_\ell}\right]\log(\varepsilon^{-1})\right).$$
\end{corollary}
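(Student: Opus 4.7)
My plan is to derive Corollary~\ref{corr:centralized_lb} as a direct asymptotic consequence of Theorem~\ref{thm:lb_general}, by imposing the natural constraint that the normalized error on the left-hand side drop below $\varepsilon$ and inverting the resulting inequalities for $t$. First I would note that the prefactor $2(1-q^2)/(1-q) = 2(1+q)$ on the left-hand side is bounded by $4$ (for $q<1$), so in order to have $\esp{\|\theta^t-\theta^*\|^2}/\|\theta^0-\theta^*\|^2 \leq \varepsilon$, it suffices to contradict a bound where the right-hand side exceeds $4\varepsilon$; hence each of the two terms on the right-hand side of Theorem~\ref{thm:lb_general} must be at most $2\varepsilon$.

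Next I would treat each term separately using the elementary inequality $(1-x)^k \leq e^{-kx}$ for $x \in [0,1)$, which turns the lower bounds into logarithmic conditions on $t$. For the first term, setting $a = 2m/(m+\sqrt{m\kappa_s/3})$, the condition $(1-a)^{4\lceil t\rceil/m} \leq 2\varepsilon$ forces $4 t a / m \geq \log(1/(2\varepsilon))$, which after substituting $a$ gives $t = \Omega\bigl((m+\sqrt{m\kappa_s})\log(\varepsilon^{-1})\bigr)$. For the second term, setting $b = 2/(1+\sqrt{\kappa_\ell/3})$, the condition $(1-b)^{2+2\lceil t\rceil/(\Delta\tau)} \leq 2\varepsilon$ yields $(2+2t/(\Delta\tau)) b \geq \log(1/(2\varepsilon))$; since $b = \Theta(1/\sqrt{\kappa_\ell})$ for $\kappa_\ell \geq 3$, this rearranges to $t = \Omega\bigl(\tau\Delta\sqrt{\kappa_\ell}\log(\varepsilon^{-1})\bigr)$.

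Finally, I would combine the two constraints. Since reaching precision $\varepsilon$ requires both conditions to hold simultaneously, $t$ must exceed the maximum of the two lower bounds, and a maximum is within a factor of two of the sum, which gives the claimed $\Omega\bigl([m+\sqrt{m\kappa_s}+\tau\Delta\sqrt{\kappa_\ell}]\log(\varepsilon^{-1})\bigr)$ rate. The only mild subtlety is ensuring that the asymptotic regime $\kappa_\ell \geq 3$ is large enough that $b \geq c/\sqrt{\kappa_\ell}$ for an absolute constant $c$, and symmetrically that $\kappa_s \geq c' m$ so that $a \leq \sqrt{c''/\kappa_s m}$ type bounds hold; outside these regimes the claimed rate is dominated by the $m$ and $\tau\Delta$ terms respectively, so the corollary remains trivially true.

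There is no real obstacle here: the whole argument is a routine asymptotic simplification of Theorem~\ref{thm:lb_general}. The work has already been done in the construction of the hard instance used for that theorem; Corollary~\ref{corr:centralized_lb} is simply the clean statement one obtains by inverting the complexity bound in the target precision $\varepsilon$.
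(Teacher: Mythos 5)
Your strategy is the natural and correct one; note that the paper itself gives no proof of this corollary, merely describing it as a ``further simplification'' of Theorem~\ref{thm:lb_general}, so your argument is supplying a missing step rather than re-deriving an existing one. However, the central inequality you cite goes the wrong way, and the step built on it would not go through as written. You invoke $(1-x)^k \leq e^{-kx}$, which is an \emph{upper} bound on $(1-x)^k$; but from the hypothesis $(1-a)^{4\lceil t\rceil/m} \leq 4\varepsilon$ an upper bound on that same quantity tells you nothing about $t$ (the quantity can be tiny while $e^{-4ta/m}$ stays large). What you actually need is the matching \emph{lower} bound, e.g.\ $(1-x)^k \geq e^{-kx/(1-x)}$, or $(1-x)^k \geq e^{-2kx}$ when $x \leq 1/2$. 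With that, $(1-a)^{4\lceil t\rceil/m} \leq 4\varepsilon$ forces $e^{-8ta/m} \leq 4\varepsilon$, hence $t \geq (m/(8a))\log\bigl(1/(4\varepsilon)\bigr) = \Omega\bigl((m+\sqrt{m\kappa_s})\log(1/\varepsilon)\bigr)$ after substituting $a = 2m/(m+\sqrt{m\kappa_s/3})$; the analogous fix on the second term, where $b = 2/(1+\sqrt{\kappa_\ell/3}) = \Theta(1/\sqrt{\kappa_\ell})$, gives $t = \Omega(\tau\Delta\sqrt{\kappa_\ell}\log(1/\varepsilon))$.

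Two smaller remarks. From the sum of the two nonnegative terms being at most $4\varepsilon$ you can only conclude that each is at most $4\varepsilon$, not $2\varepsilon$ (a harmless constant slip). And the regime concern you raise at the end is resolved more directly than you suggest: in the hard instance of Theorem~\ref{thm:lb_general} the parameters satisfy $\kappa_s = 3mL/\sigma$ with $L\geq\sigma$, so $a = 2/(1+\sqrt{L/\sigma}) \leq 1$ automatically, and by taking $L/\sigma$ bounded away from $1$ one ensures $a \leq 1/2$, which validates the exponential lower bound above without any case split on $\kappa_s$ versus $m$. Your final combination step via $\max(T_1,T_2) \geq (T_1+T_2)/2$ is fine.
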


The previous lower bounds rely on the diameter of the network, without assuming any structure. We use in this section the same trick as in~\cite{scaman2017optimal} to extend the lower bounds to the gossip communications setting. 

\begin{corollary}[Decentralized lower bound]
\label{corr:decentralized_lb}
Let $\gamma > 0$, and $\kappa_\ell > 0$. There exist a gossip matrix $W$ with spectral gap $\gamma$ and $n \times m$ functions $f_{i,j}: \ell_2 \rightarrow \R$ such that each $f_{i,j}$ is convex and $L_{i,j}$-smooth, $f_i \hat{=}\sum_{j=1}^m f_{i,j}$ is $\sigma_i$-strongly convex with $L_{i,j}$ and $\sigma_i$ such that $\kappa_\ell \geq L_{i,j} / \sigma_{i}$ for all $i,j$, and such that if $f_i$ is the local function of node $i$ then for any black-box procedure, the time to reach precision $\varepsilon$ is lower bounded by:
$$\Omega\left(\left[m + \sqrt{m\kappa_s} + \tau \sqrt{\frac{\kappa_\ell}{\gamma}}\right]\log(\varepsilon^{-1})\right).$$
\end{corollary}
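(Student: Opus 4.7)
The plan is to reduce Corollary~\ref{corr:decentralized_lb} to the diameter-based statement in Corollary~\ref{corr:centralized_lb} by exhibiting, for any prescribed spectral gap $\gamma > 0$, a gossip matrix $W$ whose associated graph has diameter $\Delta$ of order $\gamma^{-1/2}$. Since Corollary~\ref{corr:centralized_lb} already produces a communication term of the form $\tau\Delta\sqrt{\kappa_\ell}$ on any graph of diameter $\Delta$, substituting $\Delta = \Theta(\gamma^{-1/2})$ directly yields the $\tau\sqrt{\kappa_\ell/\gamma}$ term appearing in the decentralized bound, while the computational lower bound $m + \sqrt{m\kappa_s}$ is preserved because it is purely a single-node finite-sum obstruction and does not depend on the topology.

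First I would recall (as in~\cite{scaman2017optimal}) that linear chains on $n$ nodes realize this matching: the weighted Laplacian of a path graph has eigenvalues proportional to $1 - \cos(k\pi/n)$, so $\gamma^{-1/2} = \Theta(n)$ while the diameter is exactly $n-1$. Scaling the edge weights leaves $\gamma$ invariant, and choosing $n = \lceil c/\sqrt{\gamma}\rceil$ for a suitable constant $c$ gives a gossip matrix with the prescribed spectral gap and diameter $\Delta \geq c'\gamma^{-1/2}$ for some absolute $c' > 0$. This construction (or any equivalent one, for instance taking two cliques joined by a long chain, which gives more flexibility in controlling $n$) provides the required graph independently of the functions $f_{i,j}$.

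Next I would instantiate the hard instance of Theorem~\ref{thm:lb_general} on this graph, placing the two ``worst-case'' endpoints at vertices that realize the diameter, exactly as in the centralized proof. The local smoothness constants $L_{i,j}$ and strong-convexity constants $\sigma_i$ do not depend on the edge weights of $W$, so the hypothesis $\kappa_\ell \geq L_{i,j}/\sigma_i$ and the definition of $\kappa_s$ carry over verbatim, and the algorithm is still forced both to perform $\Omega((m+\sqrt{m\kappa_s})\log\varepsilon^{-1})$ local oracle calls on each endpoint and to transmit information across the full diameter. Plugging $\Delta = \Theta(\gamma^{-1/2})$ into Corollary~\ref{corr:centralized_lb} then produces the claimed bound.

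The only genuinely substantive step is the graph construction guaranteeing $\Delta = \Omega(\gamma^{-1/2})$ for an arbitrary target $\gamma$; everything else is a direct specialization of the centralized result, since the black-box model of Section~\ref{sec:black_box_model} separates computation and communication oracles and the computation lower bound is topology-independent. I do not anticipate technical obstacles beyond verifying that the chosen $W$ is a valid gossip matrix (symmetric PSD, kernel spanned by $\mathds{1}$, sparsity pattern matching the graph), which is immediate for weighted Laplacians of connected graphs.
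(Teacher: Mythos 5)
Your proposal is correct and matches the paper's approach: the paper's proof of Corollary~\ref{corr:decentralized_lb} likewise invokes the line-graph construction (referencing Theorem~2 of Scaman et al.~2017) to obtain a gossip matrix with prescribed spectral gap $\gamma$ whose diameter is $\Theta(\gamma^{-1/2})$, then substitutes this into the diameter-based bound of Corollary~\ref{corr:centralized_lb}. The extra details you supply — the $1-\cos(k\pi/n)$ eigenvalues of the path Laplacian, the remark that the computation term is topology-independent, and the alternative two-clique-plus-chain construction for finer control of $n$ — are all consistent with (and slightly more explicit than) what the paper writes.
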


\begin{proof}
The proof relies on the fact that for all $\gamma > 0$, it is possible to construct a gossip matrix on a line graph of size $n$ with spectral gap $\gamma = 0$. In this case, the diameter of the graph is $n$, which is of order $\gamma^{-1/2}$. Details can be found in Theorem~2~\cite{scaman2017optimal}.
\end{proof}

It is interesting to remark that considering the finite-sum setting only changes the lower bound on the computation cost. This is not surprising since it only allows to compute cheaper stochastic gradients but cannot reduce communication cost without additional assumptions on the functions used. As a matter of fact, the computation and communication aspects are treated separately in the lower bound. This could suggest room for improvement for this lower bound. Yet, the bound we obtain is actually tight since it is matched by the ADFS-Synch algorithm. There is actually a small subtle gap between the lower and the upper bounds, which is caused by the fact that the communication lower bound depends on $\kappa_\ell$, whereas the complexity of ADFS-Synch depends on $\kappa_\comm$, which can be much bigger. Yet, $\kappa_\comm = O(\kappa_\ell)$ in the case of the worst case function used for the lower bound, as shwon in Appendix~\ref{app:adfs_line_graph}. More generally $\kappa_\comm = O(\kappa_b)$ for generalized linear models such as linear regression when the regularization parameter is the same for all nodes, which is a prime use-case for ADFS.

\subsection{Replicated dataset}
Assume that the $\sqrt{m\kappa_s}$ term dominates. In this case, optimal single-machine algorithms require $O(\sqrt{nm\kappa_s})$ iterations and so Theorem~\ref{thm:lb_general} suggests that the maximum speedup obtainable by any distributed algorithm in this setting is of $\sqrt{n}$. This result is surprising and seems to contradict the linear speedup obtained by Katyusha~\citep{allen2017katyusha}. This is because the speedup of Katyusha is based on mini-batching, which relies on the fact that all nodes sample the same functions. The lower bound proofs critically rely on choosing different functions for different nodes. In the setting of Theorem~\ref{thm:lb_general}, the size of the problem grows with the number of nodes. On the other hand, the linear speedup of Katyusha considers a problem with a fixed number of samples processed by an increasing number of nodes. In particular, the bound of Theorem~\ref{thm:lb_general} can be weakened to match the Katyusha complexity results when all nodes are forced to have the same local functions. The idea behind these results is that only one or two nodes actually contribute to reducing the error in the worst case (Theorem~\ref{thm:lb_general}), whereas this cannot happen if all nodes have the same local function. Note that the time aspect is overseen in Katyusha, and the network is simply expected to be ``fast enough". In the replicated setting, increasing $\tau$ only increases the runtime of Katyusha up to a certain point because nodes do not actually need to communicate to reach the optimum since they all have the same local functions. On the other hand, the theoretical rate of ADFS does not show improvements in the replicated setting. 

\section{Block Accelerated Proximal Coordinate Gradient with Arbitrary Sampling}
\label{sec:apcg_main}
Before we start with the actual distributed algorithm, we first introduce a coordinate descent method. Indeed, this is the main tool that we apply to a well-chosen dual formulation to derive ADFS. The convergence results of ADFS are based on the convergence of this Accelerated Proximal Coordinate Gradient method. ADFS is derived in a way that is similar to that of the classical  APCG algorithm~\citep{lin2015accelerated}, but we integrate the decentralized aspect, which requires several improvements over the original APCG. 

\subsection{General formulation}
\label{app:generalized_apcg}
In this section, we study the generic problem of accelerated proximal coordinate descent. We give an algorithm that works with \emph{arbitrary sampling} of \emph{blocks} of coordinates of arbitrary size, thus yielding a stronger result than state-of-the-art approaches~\citep{fercoq2015accelerated, lin2015accelerated}. This is a key contribution that allows to obtain fast rates when sampling probabilities are heterogeneous and determined by the problem. In the dual formulation of the problem, there is one coordinate per point in the dataset as well as one for each edge of the network. Therefore, the block aspect allows to have a synchronous algorithm by picking only coordinates of a given kind (data point or network edge) to perform computation and communication rounds. Similarly, arbitrary sampling is useful to pick different probabilities for computing and for communicating. To avoid any confusion with the rest of the paper, we note $\ndim$ the dimension of the problem that we wish to solve. More specifically, we study the following generic problem:
\begin{equation}
\label{eq:generic_problem}
    \min_{x \in \mathbb{R}^{\ndim}}\ \  q_A(x) + \sum_{i=1}^{\ndim} \psi_i(x^{(i)}),
\end{equation}
where all the functions $\psi_i$ are convex and $q_A$ is such that there exists a matrix $A$ such that $q_A$ is $(\sigma_A)$-strongly convex on ${\rm Ker}(A)^\perp$, the orthogonal of the kernel of $A$, as defined by Equation~\eqref{eq:sc_semi_norm}. For the problems that we will consider, ${\rm Ker}(A)^\perp \subsetneq \R^\ndim$ and so $q_A$ is not strongly convex on the whole space. We introduce matrix $A$ in order to recover the good properties ensured by strong convexity, with the difference that they now hold only on a subspace. We note $A^\dagger$ is the pseudo-inverse of $A$, meaning that $A^\dagger A$ is the projector on ${\rm Ker}(A)^\perp$. We sometimes abuse notations by writing $A
^{-\frac{1}{2}}$ instead of $(A^\dagger)^\frac{1}{2}$. The strong convexity on $\Ker(A)^\perp$ can be written as the fact that for all $x,y \in \mathbb{R}^\ndim$:
\begin{equation}
\label{eq:sc_semi_norm}
q_A(x) - q_A(y)   \geq   \nabla q_A(y)^\top \! A^\dagger A(x - y) + \textstyle \frac{\sigma_A}{2}(x - y)^\top \! A^\dagger A (x - y).    
\end{equation}
Note that this implies that $q_A$ is constant on ${\rm Ker}(A)$, so in particular there exists a function $q$ such that for any $x \in \mathbb{R}^\ndim$, $q_A(x) = q(Ax)$. In this case, $\sigma_A$ is such that $x^\top A^\top\nabla^2q(y) Ax \geq \sigma_A\|x\|^2$ for any $x\in {\rm Ker}(A)^\perp$ and $y\in \mathbb{R}^\ndim$. Besides, $q_A$ is assumed to be $(M)$-smooth on $\Ker(A)^\perp$, meaning that there exists a matrix $M$ such that: 
\begin{equation}
\label{eq:M_smoothness}
q_A(x) - q_A(y)   \leq  \nabla q_A(y)^\top \! A^\dagger A(x - y) + \textstyle \frac{1}{2}(x - y)^\top \! M (x - y).    
\end{equation}
The block-version of APCG with arbitrary sampling is presented in Algorithm~\ref{algo:generalized_apcg}, and we explicit its rate in Theorem~\ref{thm:gen_apcg}. 

\subsection{Algorithm and results}
In this section, we denote $e_i \in \mathbb{R}^\ndim$ the unit vector corresponding to coordinate $i$, and $x^{(i)} = e_i^\top x$ for any $x \in \R^\ndim$. Let $R_i = e_i^\top A^\dagger A e_i$ and $p_i$ be the probability that coordinate~$i$ is picked to be updated. For a batch of coordinates $b \subset \{1, \cdots, \ndim\}$, we introduce the random matrix $P_b$ which is a \emph{diagonal} matrix such that $(P_b)_{ii} = p_i$ if $i \in b$ and $(P_b)_{ii} = 0$ otherwise, where $p_i = \sum_{b, \ i \in b} p_b$ if $p_b$ is the probability of sampling block $b$. In particular, $\mathbb{E}\left[ P_b^\dagger \right] = Id$, where $P_b^\dagger$ is the pseudo-inverse of $P_b$. The matrix $P_b$ defines the sampling that is performed. This allows to have a flexible sampling with blocks of arbitrary sizes sampled with arbitrary probabilities. Constant $S$ is such that $S^2 \geq \lambda_{\max}(A^\dagger A P_b^\dagger M P_b^\dagger A^\dagger A)$ for all batches $b$, where we recall that $M$ is the smoothness of function $q_A$, as defined in Equation~\eqref{eq:M_smoothness}. Then, following the approach of Nesterov and Stich~\citep{nesterov2017efficiency}, we fix $A_0, B_0 \in \mathbb{R}$ and recursively define the sequences $\alpha_t, \beta_t, a_t, A_t$ and $B_t$ such that:
\begin{align*}
    & a_{t+1}^2 S^2 = A_{t+1} B_{t+1}, & B_{t+1} = B_t + \sigma_A a_{t+1}, & \ \ \ \ \ \ \ \ \ A_{t+1} = A_t + a_{t+1}, \\
    & \alpha_t = \frac{a_{t+1}}{A_{t+1}}, & \beta_t = \frac{\sigma_A a_{t+1}}{B_{t+1}}. & 
\end{align*} 
Finally, we introduce the sequences $(y_t)$, $(v_t)$ and $(x_t)$, that are all initialized at $0$, and $(w_t)$ such that for all $t$, $w_t = (1 - \beta_t) v_t + \beta_t y_t$. We define $\eta_{t} = \frac{a_{t+1}}{B_{t+1}}$ and the proximal operator ${\rm prox}_{\eta f}$ is defined in Equation~\eqref{eq:prox_def}.

\begin{algorithm}
\caption{Generalized APCG$(A_0, B_0, S, \sigma_A)$}
\label{algo:generalized_apcg}
\begin{algorithmic}
\STATE $y_0 = 0$, $v_0 = 0$, $t = 0$
\WHILE{$t < T$}
\STATE $y_t = \frac{(1 - \alpha_t) x_t + \alpha_t(1 - \beta_t)v_t}{1 - \alpha_t \beta_t}$
\STATE Sample $b_t$ with probability $p_{b_t}$
\STATE $v_{t+1} = v_{t+\frac{1}{2}} = (1 - \beta_t) v_t + \beta_t y_t - \eta_{t}P_b^\dagger \nabla q_A(y_t)$
\STATE $v_{t+1}^{(i)} = {\rm prox}_{\eta_t p_i^{-1} \psi_i}\left(v_{t+\frac{1}{2}}^{(i)}\right)$ for all $i \in b$
\STATE $x_{t+1} = y_t + \alpha_t P_b^\dagger A^\dagger A (v_{t+1} - (1 - \beta_t) v_t - \beta_t y_t)$
\ENDWHILE
\end{algorithmic}
\end{algorithm}
For generalized APCG to work well, the proximal operator needs to be taken in the subspace defined by the projector $A^\dagger A$, and so the non-smooth $\psi_i$ terms have to be separable after composition with $A^\dagger A$. Since $A^\dagger A$ is a projector, this constraint is equivalent to stating that either $R_i = 1$ (projection does not affect the coordinate $i$), or $\psi_i = 0$ (no proximal update to make).

\begin{assumption}
\label{assumption:gen_apcg}
The functions $q_A$ and $\psi$ are such that Equation~\eqref{eq:sc_semi_norm} holds for some $\sigma_A \geq 0$ and Equation~\eqref{eq:M_smoothness} holds for some $M$. Besides, $\psi$ and $A$ are such that either $R_i = 1$ or $\psi_i = 0$ for all $i \in \{1, ..., \ndim\}$.
\end{assumption}
This natural assumption allows us to formulate the proximal update in standard squared norm since the proximal operator is only used for coordinates $i$ for which $A^\dagger A e_i = e_i$. Then, we formulate Algorithm~\ref{algo:generalized_apcg} and analyze its rate in Theorem~\ref{thm:gen_apcg}.
\begin{theorem}
\label{thm:gen_apcg}
Let $F: x \mapsto q_A(x) + \sum_{i=1}^{\ndim} \psi_i\left(x^{(i)}\right)$ such that Assumption~\ref{assumption:gen_apcg} holds. If $S$ is such that $S^2 \geq \lambda_{\max}((A^\dagger A P_b^\dagger M P_b^\dagger A^\dagger A)$ for all $b$ and $1 - \beta_t - \frac{\alpha_t}{p_i} \geq 0$ for all $i$ such that $\psi_i \neq 0$, the sequences $v_t$ and $x_t$ generated by APCG verify:
\begin{equation*}
    B_t\esp{\|v_t - \theta^\star\|^2_{A^\dagger A}} + 2 A_t \left[\esp{F(x_t)} - F(\theta^\star)\right] \leq C_0,
\end{equation*}
where $C_0 = B_0 \|v_0 - \theta^\star\|^2 + 2A_0\left[F(x_0) - F(\theta^\star)\right]$ and $\theta^\star$ is a minimizer of $F$. The rate of APCG depends on $S$ through the sequences $\alpha_t$ and $\beta_t$. 
\end{theorem}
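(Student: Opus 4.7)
The plan is to use an estimating-sequences / Lyapunov function argument in the style of Nesterov and Stich. Define
\[
\Phi_t = B_t \|v_t - \theta^\star\|^2_{A^\dagger A} + 2 A_t [F(x_t) - F(\theta^\star)]
\]
and establish $\esp{\Phi_{t+1} \mid \mathcal{F}_t} \leq \Phi_t$ for $\mathcal{F}_t$ the $\sigma$-algebra generated by the blocks $b_0, \dots, b_{t-1}$. The statement then follows by taking total expectation, iterating, and observing that $\Phi_0 = C_0$.

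The first step is to expand $\esp{\|v_{t+1} - \theta^\star\|^2_{A^\dagger A} \mid \mathcal{F}_t}$. Using the three-point inequality for the proximal operator applied coordinate-wise on $i \in b$, together with the fact that $v_{t+1}^{(i)} = v_{t+\frac{1}{2}}^{(i)}$ for $i \notin b$, one obtains a bound involving $\|v_{t+\frac{1}{2}} - \theta^\star\|^2_{A^\dagger A}$ and the proximal gap $\sum_{i \in b} \eta_t p_i^{-1}(\psi_i(\theta^{\star,(i)}) - \psi_i(v_{t+1}^{(i)}))$. Since $\esp{P_b^\dagger} = I$, averaging over $b$ turns these block sums into full sums over all coordinates and produces the usual gradient-descent-like terms $-2 \eta_t \nabla q_A(y_t)^\top A^\dagger A (w_t - \theta^\star) + O(\eta_t^2)$, where the quadratic piece is controlled through $S^2 \geq \lambda_{\max}(A^\dagger A P_b^\dagger M P_b^\dagger A^\dagger A)$.

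The second step is to bound $F(x_{t+1})$ using $M$-smoothness of $q_A$ and convexity of $\psi$. Because $x_{t+1} - y_t = \alpha_t P_b^\dagger A^\dagger A (v_{t+1} - w_t)$, the smoothness quadratic is again absorbed by the definition of $S$. For the $\psi$ part, the condition $1 - \beta_t - \alpha_t/p_i \geq 0$ ensures that, whenever $\psi_i \neq 0$ (hence $R_i = 1$ by Assumption~\ref{assumption:gen_apcg}), the coordinate $x_{t+1}^{(i)}$ decomposes as a genuine convex combination of $y_t^{(i)}$ and $v_{t+1}^{(i)}$, so Jensen's inequality yields an unbiased bound $\psi_i(x_{t+1}^{(i)}) \leq (1 - \beta_t - \alpha_t/p_i)\psi_i(y_t^{(i)}) + \beta_t \psi_i(y_t^{(i)}) + (\alpha_t/p_i)\psi_i(v_{t+1}^{(i)})$. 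Combining with strong convexity of $q_A$ on $\Ker(A)^\perp$ applied at $y_t$ produces the anchoring $-\sigma_A\|y_t - \theta^\star\|^2_{A^\dagger A}/2$ term.

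Combining Steps 1 and 2 with weights $B_{t+1}$ and $2 A_{t+1}$, the $\nabla q_A(y_t)$ cross-terms cancel exactly when $y_t$ is taken to be the Nesterov interpolation $y_t = \frac{(1 - \alpha_t)x_t + \alpha_t(1 - \beta_t)v_t}{1 - \alpha_t\beta_t}$; the residual quadratic terms vanish thanks to the scaling $a_{t+1}^2 S^2 = A_{t+1} B_{t+1}$; and the update $B_{t+1} = B_t + \sigma_A a_{t+1}$ absorbs the strong-convexity contribution to close the recursion. The main obstacle I expect is the careful bookkeeping of the $A^\dagger A$ and $P_b^\dagger$ projectors under expectation: since strong convexity only holds on $\Ker(A)^\perp$, every squared norm in the analysis must be measured in the seminorm $\|\cdot\|^2_{A^\dagger A}$, and Assumption~\ref{assumption:gen_apcg} (either $R_i = 1$ or $\psi_i = 0$) is exactly what guarantees that the proximal updates act inside $\Ker(A)^\perp$ and hence that the seminorm analysis is sound.
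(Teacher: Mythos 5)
Your high-level plan---a Nesterov--Stich Lyapunov recursion for $\Phi_t = B_t\|v_t-\theta^\star\|^2_{A^\dagger A}+2A_t[F(x_t)-F(\theta^\star)]$, three-point proximal inequality in the $A^\dagger A$ seminorm, strong convexity on $\Ker(A)^\perp$ anchored at $y_t$, and $S^2 \geq \lambda_{\max}(A^\dagger A P_b^\dagger M P_b^\dagger A^\dagger A)$ absorbing the smoothness quadratic---is the right one and matches the paper. But your Step 2 has a genuine gap: $x_{t+1}^{(i)}$ is \emph{not} a convex combination of $y_t^{(i)}$ and $v_{t+1}^{(i)}$. Since $x_{t+1} = y_t + \alpha_t P_b^\dagger A^\dagger A(v_{t+1} - w_t)$ with $w_t = (1-\beta_t)v_t + \beta_t y_t$, expanding for a coordinate $i$ with $R_i=1$ gives
\begin{equation*}
x_{t+1}^{(i)} = \Bigl(1 - \tfrac{\alpha_t\beta_t}{p_i}\Bigr)y_t^{(i)} - \tfrac{\alpha_t(1-\beta_t)}{p_i}v_t^{(i)} + \tfrac{\alpha_t}{p_i}v_{t+1}^{(i)},
\end{equation*}
and the coefficient on $v_t^{(i)}$ is strictly negative whenever $\alpha_t>0$, $\beta_t<1$, $p_i>0$. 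The hypothesis $1 - \beta_t - \alpha_t/p_i \geq 0$ does not make that weight nonnegative, so the direct application of Jensen you propose fails, and the estimating-sequence recursion does not close.

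The paper resolves this with Lemma~\ref{lemma:lyapunov_psi}: instead of a two-point decomposition, one proves by induction that each coordinate satisfies $x_t^{(i)} = \sum_{l=0}^t \delta^{(i)}_t(l) v_l^{(i)}$ with nonnegative, per-coordinate weights summing to one. Unfolding $y_t^{(i)}$ (via $y_t = \frac{(1-\alpha_t)x_t + \alpha_t(1-\beta_t)v_t}{1-\alpha_t\beta_t}$) contributes a positive $v_t^{(i)}$ coefficient coming from $x_t^{(i)}$'s own $v_t^{(i)}$ weight, and it is precisely $1-\beta_t-\alpha_t/p_i \geq 0$ that guarantees the \emph{net} coefficient $\delta_{t+1}^{(i)}(t)$ stays nonnegative. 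One then tracks the surrogate $\hat\psi_t^{(i)} = \sum_l \delta_t^{(i)}(l)\psi_i(v_l^{(i)}) \geq \psi_i(x_t^{(i)})$, which obeys the clean recursion $\mathbb{E}[\hat\psi_{t+1}]\leq \alpha_t\psi(\tilde v_{t+1}) + (1-\alpha_t)\hat\psi_t$ and is what actually enters the Lyapunov function in place of $\psi(x_t)$. This is the technical heart of the argument and cannot be replaced by your single-step convex-combination claim.
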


\begin{proof}[Sketch of proof]
The proof is an adaptation of the proofs from~\citet{lin2015accelerated} and~\citet{nesterov2017efficiency}. In particular, the structure is similar to that of~\citet{nesterov2017efficiency}. The difference is that the $\|v_{t+1} - \theta^\star\|^2$ is studied in norm $A^\dagger A$ and that $v_{t+1}$ cannot be expressed simply as $v_t$ minus a gradient term the way it was before because of the proximal update. Therefore, we develop $\|v_{t+1} - \theta^\star\|^2_{A^\dagger A}$ using the strong convexity of the proximal mapping instead, which is a key argument from~\citet{lin2015accelerated}.

The other key point of the APCG proof is that $x_t$ can be expressed as a convex combination of all the $v_l$ for $l \leq t$. This does not directly extend to the arbitrary sampling case because the coefficients may not be the same for all coordinates, so we need to prove that the convex combination property holds separately for each coordinate. This is possible because the only terms that depend on the coordinates in the decomposition of $x_t$ come from the $v_{t+1} - w_t$ term. Yet, $v_{t+1} = w_t$ when the coordinate is not picked, so we can still write that $x_{t+1}^{(i)} = y_t^{(i)} + \frac{\alpha_t}{p_i}(v_{t+1}^{(i)} - w_t^{(i)})$ even when coordinate $i$ is not picked at time $t$.
\end{proof}

\subsection{Explicit rates}

Theorem~\ref{thm:gen_apcg} is a general method that in particular requires to set values for $A_0$, $B_0$, $\alpha_0$ and $\beta_0$. The two following corollaries give choices of parameters depending on whether $\sigma_A > 0$ or $\sigma_A = 0$, along with the rate of APCG in these cases.

\begin{corollary}[Strongly Convex case]
\label{corr:sc_apcg}
Let $F$ be such that it verifies the assumptions of Theorem~\ref{thm:gen_apcg}. If $\sigma_A > 0$, we can choose for all $t\in \mathbb{N}$ $\alpha_t = \beta_t = \rho$ and $A_t = \sigma_A^{-1} B_t = (1 - \rho)^{-t}$ with $\rho = \sqrt{\sigma_A}S^{-1}$. In this case, the condition $1 - \beta_t - \frac{\alpha_t}{p_i} \geq 0$ can be weakened to $1 - \frac{\alpha_t}{p_i} \geq 0$ and it is automatically satisfied by our choice of $S$, $\alpha_t$ and $\beta_t$. In this case, the sequences $x_t$ and $v_t$ verify: $$\sigma_A \esp{\|v_t - \theta^\star\|^2_{A^\dagger A}} + 2\left[\esp{F(x_t)} - F(\theta^\star)\right] \leq C_0 (1- \rho)^t,$$
where $C_0 = \sigma_A \|v_0 - \theta^\star\|^2 + 2\left[F(x_0) - F(\theta^\star)\right]$.
\end{corollary}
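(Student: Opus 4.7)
The plan is to verify that the proposed closed-form choices satisfy every recursion preceding Algorithm~\ref{algo:generalized_apcg}, then apply Theorem~\ref{thm:gen_apcg} and rescale by $A_t$ to extract the claimed linear rate.

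First, I would carry out a short algebraic verification. With $A_t = (1-\rho)^{-t}$, a direct computation gives $a_{t+1} = A_{t+1} - A_t = \rho A_{t+1}$, so $\alpha_t = a_{t+1}/A_{t+1} = \rho$. Taking $B_t = \sigma_A A_t$ is automatically compatible with $B_{t+1} = B_t + \sigma_A a_{t+1}$, and yields $\beta_t = \sigma_A a_{t+1}/B_{t+1} = \rho$, so $\alpha_t = \beta_t = \rho$. Plugging these into $a_{t+1}^2 S^2 = A_{t+1} B_{t+1}$ collapses the constraint to $\rho^2 S^2 = \sigma_A$, forcing $\rho = \sqrt{\sigma_A}/S$ as claimed.

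Next, I would justify the weakening of $1 - \beta_t - \alpha_t/p_i \geq 0$ to $1 - \alpha_t/p_i \geq 0$. In the proof of Theorem~\ref{thm:gen_apcg}, the extra $\beta_t$ enters because the strong convexity of the proximal mapping produces a $\|v_{t+1} - \theta^\star\|^2_{A^\dagger A}$ bound that is compared to the $B_t$-weighted potential; in the strongly convex case the identity $B_t = \sigma_A A_t$ makes this $\beta_t$-contribution exactly cancel against the strong convexity term $\tfrac{\sigma_A}{2}\|\cdot\|^2$ in~\eqref{eq:sc_semi_norm}, leaving only $1 - \alpha_t/p_i \geq 0$. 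This residual inequality is automatic from the choice of $S$: whenever $\psi_i \neq 0$, Assumption~\ref{assumption:gen_apcg} forces $R_i = 1$, so for any sampled block $b \ni i$ the $(i,i)$ entry of $A^\dagger A P_b^\dagger M P_b^\dagger A^\dagger A$ equals $M_{ii}/p_i^2$, and therefore $S^2 \geq M_{ii}/p_i^2$; comparing~\eqref{eq:sc_semi_norm} and~\eqref{eq:M_smoothness} along $e_i \in \Ker(A)^\perp$ gives $\sigma_A \leq M_{ii}$, hence $\rho = \sqrt{\sigma_A}/S \leq p_i$.

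Finally, Theorem~\ref{thm:gen_apcg} delivers $B_t \mathbb{E}\|v_t - \theta^\star\|^2_{A^\dagger A} + 2A_t[\mathbb{E} F(x_t) - F(\theta^\star)] \leq C_0$; dividing by $A_t = (1-\rho)^{-t}$ and using $B_t/A_t = \sigma_A$ together with the matching $B_0 = \sigma_A A_0$ in the initial condition reproduces exactly the stated inequality. The main obstacle is the middle step: one must reopen the proof of Theorem~\ref{thm:gen_apcg} and track exactly where the $\beta_t$ originating from the proximal inequality appears, then verify that the identity $B_t = \sigma_A A_t$ makes it \emph{cancel} with the strong-convexity contribution, rather than merely be dominated by it, which is the reason the strongly convex analysis is strictly sharper than applying the general template as-is.
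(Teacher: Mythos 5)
Your algebraic verification of the parameter recursion, the derivation $\rho = \sqrt{\sigma_A}/S$, the final telescoping step, and the check that $\rho \leq p_i$ follows from $S^2 \geq M_{ii}/p_i^2$ and $\sigma_A \leq M_{ii}$ are all correct and match the spirit of the paper's (very terse) proof, which simply plugs the closed-form choices into the recursions. However, your proposed mechanism for the weakening of $1 - \beta_t - \alpha_t/p_i \geq 0$ to $1 - \alpha_t/p_i \geq 0$ is wrong.

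That condition does not come from the descent recurrence in the proof of Theorem~\ref{thm:gen_apcg} and is not a consequence of any cancellation between $B_t$ and the strong convexity term $\tfrac{\sigma_A}{2}\|\cdot\|^2_{A^\dagger A}$. It arises entirely inside Lemma~\ref{lemma:lyapunov_psi}, which expresses $x_t^{(i)}$ as a \emph{convex} combination $\sum_{l\leq t}\delta_t^{(i)}(l)\,v_l^{(i)}$ so that $\psi_i(x_t^{(i)}) \leq \hat\psi_t^{(i)}$. Positivity of the coefficient on $v_t^{(i)}$ requires
\begin{equation*}
\delta_{t+1}^{(i)}(t) = \frac{\alpha_t}{1-\alpha_t\beta_t}\left[\left(1 - \beta_t - \frac{\alpha_t}{p_i}\right) + \frac{\beta_t}{p_i}\left(1 - (1-\alpha_t)\frac{\alpha_t}{p_i}\right)\right] \geq 0,
\end{equation*}
and when $\alpha_t = \beta_t = \rho$ this expression factors algebraically as $\tfrac{\rho}{1+\rho}\bigl(1 - \rho^2/p_i^2\bigr)$, which is nonnegative exactly when $\rho \leq p_i$. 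That purely algebraic simplification — not a strong-convexity cancellation — is the reason the weaker condition suffices; it is already stated as an alternative hypothesis in Lemma~\ref{lemma:lyapunov_psi} and holds also in the convex case $\beta_t = 0$, where there is no $B_t = \sigma_A A_t$ identity to invoke. In the actual proof of Theorem~\ref{thm:gen_apcg}, the identities $B_{t+1}(1-\beta_t) = B_t$ and $a_{t+1}\sigma_A = B_{t+1}\beta_t$ handle the $\beta_t$-split of $\|w_t - \theta^\star\|^2_{A^\dagger A}$ by construction of the sequences, independently of whether $\alpha_t = \beta_t$; they impose no constraint of the form $1 - \beta_t - \alpha_t/p_i \geq 0$. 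So to repair your argument, replace the ``reopen the main descent proof and track the strong-convexity cancellation'' step by the factorization of $\delta_{t+1}^{(i)}(t)$ in Lemma~\ref{lemma:lyapunov_psi}; your verification that $\rho \leq p_i$ for all $i$ with $\psi_i \neq 0$ then completes the argument as intended.
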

Corollary~\ref{corr:sc_apcg} is the extension of the results of~\citet{lin2015accelerated} to block coordinates and arbitrary sampling. In particular, APCG converges linearly in this case, and we recover the rate of~\cite{lin2015accelerated} in the special case in which we choose blocks of size $1$ uniformly at random.  Note that an arbitrary sampling extension of accelerated coordinate descent was already present in~\cite{hanzely2018accelerated} but without the block or proximal aspects on which our technical contributions are focused. 

\begin{corollary}[Convex case]
\label{corr:cvx_apcg}
Let $F$ be such that it verifies the assumptions of Theorem~\ref{thm:gen_apcg}. If $\sigma_A = 0$, we can choose $\beta_t = 0$ and $\alpha_0 = p_{\min}^2$ with $p_{\min} = \min_{i: \psi_i \neq 0} p_i$. In this case, the condition $1 - \beta_t - \frac{\alpha_t}{p_i} \geq 0$ is always satisfied for our choice of $S$ and the error verifies:
$$ \esp{F(x_t)} - F(\theta^\star) \leq \frac{2}{t^2} \left[S^2 r_t^2 + \frac{2}{p_{\min}^2}\left[F(x_0) - F(\theta^\star)\right]\right],$$
with $r_t^2 = \|v_0 - \theta^\star\|^2_{A^\dagger A} -  \mathbb{E}[\|v_t - \theta^\star\|^2_{A^\dagger A}]$.
Note that there is no need to choose parameters $A_t$ and $B_t$ since only parameter $\alpha_t$ is required in this case.
\end{corollary}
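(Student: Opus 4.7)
The plan is to specialize the general bound of Theorem~\ref{thm:gen_apcg} to the setting $\sigma_A = 0$ and simplify. I would proceed in four steps, whose order is dictated by the structure of the recurrences defining $(A_t)$, $(B_t)$, $(\alpha_t)$.

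First, since $\sigma_A = 0$, the recurrence $B_{t+1} = B_t + \sigma_A a_{t+1}$ collapses to $B_t = B_0$ for every $t$, and the choice $\beta_t = \sigma_A a_{t+1}/B_{t+1} = 0$ is automatically consistent. Substituting $B_t = B_0$ into the bound of Theorem~\ref{thm:gen_apcg} and isolating the function-gap term gives
\begin{equation*}
\esp{F(x_t)} - F(\theta^\star) \leq \frac{B_0}{2A_t}\bigl(\|v_0 - \theta^\star\|_{A^\dagger A}^2 - \esp{\|v_t - \theta^\star\|_{A^\dagger A}^2}\bigr) + \frac{A_0}{A_t}\bigl[F(x_0) - F(\theta^\star)\bigr],
\end{equation*}
which already has the shape of the claimed bound, with $r_t^2$ recognised in the first term.

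Second, I would establish a lower bound of order $t^2$ on $A_t$. The equality $a_{t+1}^2 S^2 = A_{t+1} B_0$ yields $a_{t+1} = \sqrt{A_{t+1}}\,\sqrt{B_0}/S$. Setting $u_t = \sqrt{A_t}$, the identity $A_{t+1} - A_t = (u_{t+1}-u_t)(u_{t+1}+u_t)$ combined with $u_{t+1} \geq u_t$ gives $u_{t+1} - u_t \geq \sqrt{B_0}/(2S)$, so telescoping yields $\sqrt{A_t} \geq \sqrt{A_0} + t\sqrt{B_0}/(2S)$ and in particular $A_t \geq t^2 B_0/(4S^2)$. Injecting this into the first term on the right-hand side above gives exactly $B_0/(2A_t) \leq 2S^2/t^2$.

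Third, I would verify the step-size condition $1 - \beta_t - \alpha_t/p_i \geq 0$. Since $\beta_t = 0$, this amounts to $\alpha_t \leq p_{\min}$ for every coordinate with $\psi_i \neq 0$. From $\alpha_t^2 = a_{t+1}^2/A_{t+1}^2 = B_0/(A_{t+1} S^2)$ one sees that $\alpha_t$ is decreasing in $t$, so it is enough to check $\alpha_0 = p_{\min}^2 \leq p_{\min}$, which holds.

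Fourth, I would control $A_0/A_t$ using the explicit value of $A_0$ imposed by the choice of $\alpha_0$. The identity $\alpha_0 = a_1/A_1 = p_{\min}^2$ together with $a_1^2 S^2 = A_1 B_0$ pins down $A_1$ as a function of $B_0$, $S$ and $p_{\min}$, and then $A_0 = (1 - \alpha_0) A_1$. Combined with the lower bound on $A_t$ from step two this produces a bound of the form $A_0/A_t \leq C/(p_{\min}^2 t^2)$, and collecting this with the contribution from the first term gives the announced rate. The main obstacle is this last step: tracking the precise constants requires balancing the initialization $(A_0,B_0)$ against the recurrence so that the factor $4/p_{\min}^2$ really comes out of $A_0/A_t$ after using $A_t \geq t^2 B_0/(4S^2)$; the rest of the argument is a direct specialisation of Theorem~\ref{thm:gen_apcg} and elementary manipulations of the sequences.
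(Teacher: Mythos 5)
Your proof follows the same route as the paper's: specialize the Theorem~\ref{thm:gen_apcg} bound with $B_t = B_0$, isolate the function-gap term, lower-bound $A_t$ via $\sqrt{A_{t+1}} - \sqrt{A_t} \geq \sqrt{B_0}/(2S)$ to get $A_t \geq B_0 t^2/(4S^2)$, and then control $A_0/A_t$ using the explicit value of $A_0$. Steps one through three match the paper almost verbatim.

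The obstacle you flag at step four is a genuine discrepancy, but it is not in your argument --- it is in the corollary statement itself. Carry out your own computation: from $a_1 = \alpha_0 A_1$ and $a_1^2 S^2 = A_1 B_0$ one gets $A_1 = B_0/(\alpha_0^2 S^2)$, hence $A_0 = (1-\alpha_0) A_1 \leq B_0/(\alpha_0^2 S^2)$, so $A_0/A_t \leq 4/(\alpha_0^2 t^2)$. With $\alpha_0 = p_{\min}^2$ this yields a factor $4/p_{\min}^4$ on the $F(x_0) - F(\theta^\star)$ term, not the announced $4/p_{\min}^2$. The paper's own proof in the appendix takes $\alpha_0 = p_{\min}$ (not $p_{\min}^2$), computes $A_0 = \bigl[(2/p_{\min}-1)^2 - 1\bigr] B_0/(4S^2) \leq B_0/(p_{\min}^2 S^2)$, and this is exactly what makes the $2/p_{\min}^2$ coefficient appear. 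The condition from Lemma~\ref{lemma:lyapunov_psi} under $\beta_t = 0$ is $\alpha_t \leq p_i$ for all $i$ with $\psi_i \neq 0$, which $\alpha_0 = p_{\min}$ already saturates, so there is no reason to square it. In short: the statement's $\alpha_0 = p_{\min}^2$ is inconsistent with the bound it claims; with $\alpha_0 = p_{\min}$ your outline closes cleanly and gives exactly the announced constant. One piece of the paper's proof you omit --- deriving the explicit recursion $\alpha_{t+1}^{-2} - \alpha_{t+1}^{-1} = \alpha_t^{-2}$, which justifies the remark that only $(\alpha_t)$ needs to be tracked --- is not needed for the rate bound itself, so that omission is harmless for proving the displayed inequality.
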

In the convex case, we only have control over the objective function $F$ and not over the parameters. This in particular means that it is only possible to have guarantees on the dual objective in the case of non-smooth ADFS. 

\paragraph{Efficient iterations} Our extended APCG algorithm is also closely related with an arbitrary sampling version of APPROX~\cite{fercoq2015accelerated}. Similarly to Lee and Sidford~\cite{lee2013efficient}, APPROX also uses iterations that can be more efficient, especially in the linear case. These extensions can also be applied to APCG under the same assumptions, as shown in~\citet{lin2015accelerated}. We do not include the derivations in this paper since they are direct adaptations of the previously cited papers. Yet, the efficient formulations of the generalized APCG algorithm are presented in Appendix~\ref{app:efficient_apcg}.

\paragraph{Sampling with replacement}
The arbitrary sampling litterature for accelerated coordinate descent methods is vast~\citep{lee2013efficient,allen2016even,nesterov2017efficiency, hanzely2018accelerated}, and we present in this paper results for the general setting of block proximal coordinate gradient. Yet, standard mini-batch stochastic gradient descent algorithms use sampling \emph{with} replacement, whereas coordinate descent methods always use the notion of blocks, \emph{i.e.}, \emph{without} replacement. Algorithm~\ref{algo:generalized_apcg} does not extend to sampling with replacement, and this mainly comes from the fact that proximal updates do not mix well with sampling with replacement, and Lemma~\ref{lemma:lyapunov_psi} does not hold anymore in this case.

\section{Accelerated Decentralized Stochastic Algorithm}
\subsection{The dual problem}
\label{sec:model}
\begin{figure}[!htb]
    \centering
        \centering
        \vspace{-.4cm}
          \includegraphics[width=\linewidth]{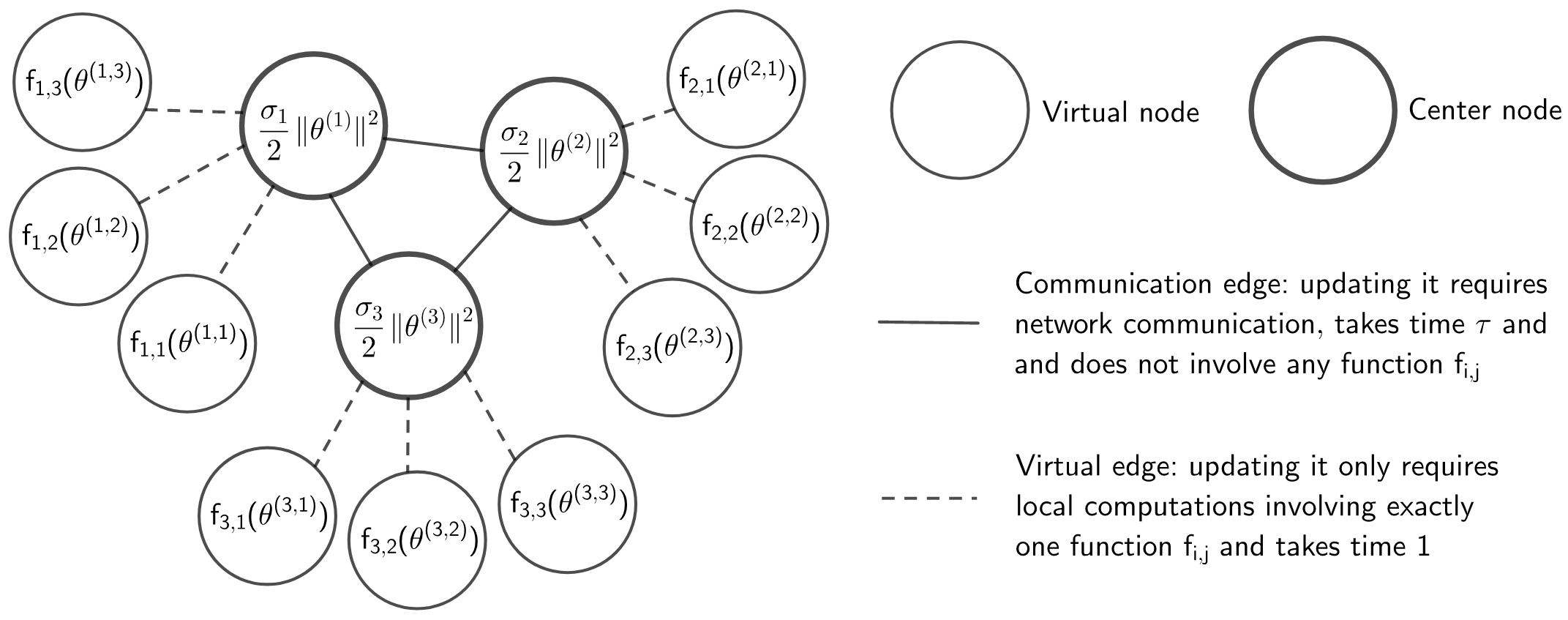}
  \vspace*{-.4cm}
\caption{Illustration of the augmented graph for $n=3$ and $m=3$.}
\label{fig:extended_graph}
\end{figure}
We now specify our approach to solve the problem of Equation~\eqref{eq:distributed_problem}. The first (classical) step consists in considering that all nodes have a local parameter, but that all local parameters should be equal because the goal is to have the global minimizer of the sum. Therefore, the problem writes: \vspace{-4pt}
\begin{equation}
    \label{eq:constr_distributed_prob}
    \min_{\theta \in \mathbb{R}^{n \times d}}\ \  \sum_{i=1}^n f_i(\theta^{(i)}) \ \  \text{ such that } \  \theta^{(i)} = \theta^{(j)} \text{ if } j \in \mathcal{N}(i),
\end{equation}
where $\mathcal{N}(i)$ represents the neighbors of node $i$ in the communication graph.
Then, \esdacd~and \msda~are obtained by applying accelerated (coordinate) gradient descent to an appropriate dual formulation of Problem~\eqref{eq:constr_distributed_prob}. In the dual formulation, constraints become variables and so updating a dual coordinate consists in performing an update along an edge of the network. In this work, we consider a new virtual graph in order to get a stochastic algorithm for finite sums. The transformation is sketched in Figure~\ref{fig:extended_graph}, and consists in replacing each node of the initial network by a star network. The centers of the stars are connected by the actual communication network, and the center of the star network replacing node~$i$ has the local function $f^{\rm comm}_i : x \mapsto \frac{\sigma_i}{2} \| x \|^2$. The center of node $i$ is then connected with $m$ nodes whose local functions are the functions $f_{i,j}$ for $j \in \{1, ..., m\}$. If we denote $E$ the number of edges of the initial graph, then the augmented graph has $n(1 + m)$ nodes and $E + nm$ edges. This augmented graph formulation was introduced in the conference version of this paper~\citep{hendrikx2019accelerated}.

Then, we consider one parameter vector $\theta^{(i,j)}$ for each function $f_{i,j}$ and one vector $\theta^{(i)}$ for each function~$f^{\rm comm}_i$. Therefore, there is one parameter vector for each node in the augmented graph. We impose the standard constraint that the parameter of each node must be equal to the parameters of its neighbors, but neighbors are now taken in the augmented graph. This yields the following minimization problem:\vspace{-5pt}
\begin{equation}
\label{eq:primal_constrained}
    \begin{split}
        \min_{\theta \in \mathbb{R}^{n(1 + m) d}} \ & \sum_{i=1}^n \bigg[ \ \ \sum_{j=1}^m f_{i,j}(\theta^{(i,j)}) + \frac{\sigma_i}{2} \| \theta^{(i)} \|^2 \bigg] \\
        \mbox{ such that } & {\theta^{(i)} = \theta^{(j)} \text{ if } j \in \mathcal{N}(i)}, 
       \text{ and } {\theta^{(i,j)} = \theta^{(i)} \ \ \forall j \in \{1, .., m\}}.
    \end{split}
\end{equation}

In the rest of the paper, we use letters $k,\ell$ to refer to any nodes in the augmented graph, and letters $i,j$ to specifically refer to a communication node and one of its virtual nodes. More precisely, we denote $(k, \ell)$ the edge between the nodes $k$ and $\ell$ in the augmented graph. Note that $k$ and $\ell$ can be virtual or communication nodes.
To clearly make the distinction between node variables and edge variables, for any vector on the set of nodes of the augmented graph $x\in \mathbb{R}^{n(1 + m)d}$ and for $k \in \{1, ..., n(1+m)\}$, we write $x^{(k)} \in \R^d$ (superscript notation) the subvector associated with node $k$. Similarly, for any vector on the set of edges of the augmented graph $\lambda \in \mathbb{R}^{(E + nm) d}$ and for any edge $(k, \ell)$ we write $\lambda_{k\ell} \in \R^d$ (subscript notation) the vector associated with edge $(k,\ell)$. For node variables, we use the subscript notation with a $t$ to denote time, for instance in Algorithm~\ref{algo:sc_adfs}. By a slight abuse of notations, we use indices $(i,j)$ instead of $(k,\ell)$ when specifically referring to virtual edges (or virtual nodes) and denote $\lambda_{ij}$ instead of $\lambda_{i,(i,j)}$ the virtual edge between node $i$ and node $(i,j)$ in the augmented graph. We note $e^{(k)} \in \R^n(1+ m)$ the unit vector associated with node $k$ and $e_{k\ell} \in \R^{E + nm}$ the unit vector associated with edge $k\ell$. We denote $M_1 \otimes M_2$ the Kronecker product between matrices $M_1$ and $M_2$.\vspace{3pt}

\paragraph{Constraints matrix} The constraints of Problem~\eqref{eq:primal_constrained} can be rewritten $A^\top \theta = 0$ in matrix form $A \in \mathbb{R}^{n(1 + m)d \times (nm + E)d}$ is such that for any $x\in \R^d$, 
$$A (e_{k\ell} \otimes x) = \mu_{k\ell} [(e^{(k)} - e^{(\ell)}) \otimes P_{k\ell} x],$$
for some $\mu_{k\ell} > 0$, and where $P_{k\ell}$ is a projector. For communication edges, we choose $P_{k\ell} = I_d$, and for communication edges, we choose $P_{ij}$ such that $f_{i,j}$ is $L_{ij}$-smooth with respect to $P_{ij}$, as defined in Equation~\eqref{eq:M_smoothness}. Note that this implies that $f_{ij}^*$ is $(1/L_{ij})$-strongly convex on $\Ker(P_{ij})^\perp$ and infinite elsewhere. The matrix $A$ is therefore completely defined by the $\mu_{k\ell}$ and the $M_{ij}$. Most results in the following sections heavily depend on the matrix $A$ and it is therefore very important to understand its structure. In particular, decentralized communications are defined by the matrix $A$. Indeed, $A$ can be understood as the canonical square root of the weighted Laplacian of the augmented graph. Similarly, if we note $A_{\comm} \in \R^{n \times E}$ the restriction of $A$ to non-virtual edges then $A_{\comm}$ is a square root of the weighted Laplacian of the communication graph. This is why a rescaled version of $A_{\comm}A_{\comm}^\top \in \R^{n \times n}$ is used as the gossip matrix in Algorithm~\ref{algo:sc_adfs}. To make things clearer, $A$ can be written as:
\begin{equation}
    A = \begin{pmatrix}
    A_\comm \otimes I_d & D_\mu \\
    0 & - D_\mu^{\rm diag}
    \end{pmatrix}, \hbox{ with }
\end{equation}

\begin{equation}
    D_\mu^{\rm diag} = \begin{pmatrix}
    \mu_{11} P_{11} & 0 & 0\\
    0 & \cdots & 0 \\
    0 & 0 & \mu_{nm} P_{nm}
    \end{pmatrix} \in \R^{nmd \times nmd}, \hbox{ and }
\end{equation}
\begin{equation}
    D_\mu = \begin{pmatrix}
    \mu_{11} P_{11} & \cdots & \mu_{1m} P_{1m} & 0 & 0 & 0 & 0\\
    0 & 0 & 0 & \cdots & 0 & 0 & 0 \\
    0 & 0 & 0 & 0 & \mu_{n1} P_{n1} & \cdots & \mu_{nm} P_{nm}
    \end{pmatrix} \in \R^{nd \times nmd}.
\end{equation}
All \emph{communication nodes} are linked by the true graph, whereas all \emph{virtual nodes} are linked to their corresponding communication node. Note that $A$ is defined differently in the conference paper~\citep{hendrikx2019accelerated}. Although the new definition of $A$ as an $n(m + 1)d \times (E + nm)d$ matrix is heavier in terms of notations, it allows to derive a much better communication complexity in some cases, for instance when $f_{ij}$ is a generalized linear model. Now that we have defined the matrix $A$ and emphasized its importance, we can write the dual formulation of the problem as:
\begin{equation}
    \max_{\lambda \in \mathbb{R}^{(nm + E) d}} - \sum_{i=1}^n \bigg[\sum_{j = 1}^m f_{i,j}^*\left((A\lambda)^{(i,j)}\right) + \frac{1}{2\sigma_i} \| (A\lambda)^{(i)} \|^2 \bigg],
\end{equation}
where the parameter $\lambda$ is the Lagrange multiplier associated with the constraints of Problem~\eqref{eq:primal_constrained}---more precisely, for an edge $(k,\ell)$, $\lambda_{k\ell} \in \mathbb{R}^d$ is the Lagrange multiplier associated with the constraint $\mu_{k\ell} P_{k\ell} (\theta^{(k)} - \theta^{(\ell)})=0$. This critically relies on the fact that $f_{i,j}^*(P_{ij}x) = f_{ij}^*(x)$ for all $x\in {\rm dom}(f_{ij}^*) = \Ker(P_{ij})^\perp$. At this point, the functions $f_{i,j}$ are only assumed to be convex (and not necessarily strongly convex) meaning that the functions $f_{i,j}^*$ are potentially non-smooth. This problem could be bypassed by transferring some of the quadratic penalty from the communication nodes to the virtual nodes before going to the dual formulation. Yet, this approach fails when $m$ is large because the smoothness parameter of $f_{i,j}^*$ would scale as $m / \sigma_i$ at best, whereas a smoothness of order $1 / \sigma_i$ is required to match optimal finite-sum methods. 
A better option is to consider the $f_{i,j}^*$ terms as non-smooth and perform proximal updates on them. The rate of proximal gradient methods such as APCG~\citep{lin2015accelerated} does not depend on the strong convexity parameter of the non-smooth functions $f_{i,j}^*$. Recall that each $f_{i,j}^*$ is $(1 / L_{i,j})$-strongly convex with respect to $P_{ij}$, %(because $f_{i,j}$ was $(L_{i,j})$-smooth)
so we can rewrite the previous equation in order to transfer all the strong convexity to the communication node. Noting that $(A\lambda)^{(i,j)} = - \mu_{ij} \lambda_{ij}$ when node $(i,j)$ is a virtual node associated with node $i$, we rewrite the dual problem as:
\begin{equation}
\label{eq:dual_problem}
    \min_{\lambda \in \mathbb{R}^{(E + nm) d}} q_A(\lambda) +  \sum_{i=1}^n \sum_{j=1}^m \psi_{ij}(\lambda_{ij}),
\end{equation}
with $\psi_{ij}: x \mapsto  \tilde{f^*_{ij}}(- \mu_{ij}x)$ and $ \tilde{f^*_{ij}}: x \mapsto f_{i,j}^*(x) - \frac{1}{2L_{i,j}}\|x\|^2_{P_{ij}}$ and $q_A:x \mapsto {\rm Trace}\big(\frac{1}{2}x^\top A^\top\Sigma^{\dagger} A x\big)$, where $\Sigma$ is the diagonal matrix such that the upper left (communication) block is equal to ${\rm diag}(\sigma_1, \cdots, \sigma_n) \otimes I_d$, and the rest of the diagonal is made of the blocks $L_{i,j} P_{ij}$ for the virtual node $(i,j)$. Since dual variables are associated with edges, using coordinate descent algorithms on dual formulations from a well-chosen augmented graph of constraints allows us to handle both computations and communications in the same framework. Indeed, choosing a variable corresponding to an actual edge of the network results in a communication along this edge, whereas choosing a virtual edge results in a local computation step. Then, we balance the ratio between communications and computations by simply adjusting the probability of picking a given kind of edges.

\subsection{The Algorithm: ADFS Iterations and Expected Error}
\label{sec:alg}
Recall that we would like to solve the problem of Equation~\eqref{eq:dual_problem}, which is to optimize the sum of a smooth and strongly convex term and of a non-smooth convex separable term. Proximal coordinate gradient algorithms are known to work well for these problems, which is why we would like to use APCG~\citep{lin2014accelerated}. Yet, the following points would lead to suboptimal rates if the standard APCG algorithm were used directly:
\begin{enumerate}
    \item The function $q_A$ is strongly-convex only on ${\rm Ker}(A)^\perp$.
    \item Picking blocks of coordinates is required to obtain a synchronous algorithm.  
    \item Choosing different probabilities for computation and communication coordinates is required to balance the ratio between communication and computation.
\end{enumerate}
These 3 points show the need for extending APCG and motivate our assumptions for Algorithm~\ref{algo:generalized_apcg}. Applying it to the problem of Equation~\eqref{eq:dual_problem} yields the general ADFS algorithm. We start by presenting the smooth version of ADFS in this section, and a non-smooth version is presented in Section~\ref{sec:non-smooth}. We denote $W_{k\ell} \in \mathbb{R}^{n(1 + m) \times n(1 + m)}$ the matrix such that $W_{k\ell} = (e^{(k)} - e^{(\ell)})(e^{(k)} - e^{(\ell)})^\top$ for any edge $(k, \ell)$. The previous section needed to consider the problem variables to be vectors in $\R^{n(1+m)d}$ in order to define the right matrix $A$. Yet, variables $x_t$, $y_t$ and $v_t$ from Algorithm~\ref{algo:sc_adfs} are variables associated with the nodes of the augmented graph and we will therefore consider them as matrices in $\mathbb{R}^{n(1 + m) \times d}$ (one row for each node) instead of vectors, which greatly simplifies notations. These variables are obtained by multiplying the dual variables of the proximal coordinate gradient algorithm applied to the dual problem of Equation~\eqref{eq:dual_problem} by $A$ on the left. We denote $\sigma_A = \lambda_{\min}^+(A^\top\Sigma^{\dagger}A)$ the smallest non-zero eigenvalue of the matrix $A^\top\Sigma^{\dagger}A$.

\begin{algorithm}
\caption{ADFS$\left(A, (\sigma_i), (L_{i,j}), (\mu_{k\ell}), (p_{k\ell}), \rho \right)$}
\label{algo:sc_adfs}
\begin{algorithmic}[1]
\STATE $\sigma_A = \lambda_{\min}^+(A^\top\Sigma^{\dagger}A)$, $\eta = \frac{\rho}{\sigma_A}$, $W_b = A_\comm P_b^\dagger A_\comm^\top$, $\tilde{W}_b = A_\comm P_b^\dagger A^\dagger_\comm$.
\STATE $x_0 = y_0 = v_0 = z_0 = 0^{(n + nm) \times d}$  \COMMENT{Initialization}
\FOR[Run for $K$ iterations]{$t=0$ to $K-1$}
\STATE $y_t = \frac{1}{1 + \rho}\left(x_t + \rho v_t\right)$
\STATE Sample block of edges $b$ \COMMENT{Edges sampled from the augmented graph}
\STATE $z_{t+1} = v_{t+1} = (1 - \rho) v_t + \rho y_t - \eta W_b\Sigma^{\dagger}y_t$ \COMMENT{Communication using $W_b$}
\IF{$b$ is a block of virtual edges}
\FOR{$i = 1$ to $n$}
\FOR{$j$ such that $(i,j) \in b$}
\STATE $v_{t+1}^{(i,j)} = {\rm prox}_{\eta \mu_{ij}^2 p_{ij}^{-1}\tilde{f}^*_{i,j}}\left(z_{t+1}^{(i,j)}\right)$  \COMMENT{Virtual node update using $f_{i,j}$}
\ENDFOR
\STATE $v_{t+1}^{(i)} = z_{t+1}^{(i)} + \sum_{j, (i,j) \in b} (z_{t+1}^{(i,j)} - v_{t+1}^{(i,j)})$ \COMMENT{Center node update}
\ENDFOR
\ENDIF
\STATE $x_{t+1} = y_t + \rho \tilde{W}_b(v_{t+1} - (1 - \rho) v_t - \rho y_t)$
\ENDFOR
\STATE \textbf{return} $\theta_K = \Sigma^{\dagger}v_K$ \COMMENT{Return primal parameter}
\end{algorithmic}
\end{algorithm}

\begin{theorem}
\label{thm:rate_adfs}
We denote $\theta^\star$ the minimizer of the primal function $F:x\mapsto \sum_{i=1}^n f_i(x)$ and $\theta^\star_A$ a minimizer of the dual function $F^*_A = q_A + \psi$. Then $\theta_t$ as output by Algorithm~\ref{algo:sc_adfs} verifies:
\begin{equation}
\label{eq:S}
    \esp{\|\theta_t - \theta^\star\|^2}
    \leq C_0 (1 - \rho)^t, \ \ \ \text{ if } \ \ \ \rho^2 \leq \min_{b} \frac{\lambda_{\min}^+ (A^\top \Sigma^{\dagger} A )}{\lambda_{\max}(A^\dagger A P_b^\dagger A^\top \Sigma^{\dagger} A P_b^\dagger A^\dagger A)},
\end{equation}
with $C_0 = \lambda_{\max}(A^\top\Sigma^{-2}A) \left[\|A^\dagger A\theta^\star_A\|^2 + 2 \sigma_A^{-1} \left(F^*_A(0) - F^*_A(\theta^\star_A)\right)\right]  $.
\end{theorem}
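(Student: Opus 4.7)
The plan is to derive Theorem~\ref{thm:rate_adfs} as a direct application of Corollary~\ref{corr:sc_apcg} (the strongly convex case of generalized APCG) to the dual problem~\eqref{eq:dual_problem}, followed by a conversion from the dual guarantee to the primal error $\|\theta_t - \theta^\star\|^2$. The conceptual key is that the ADFS iterates $x_t, y_t, v_t$ (indexed by nodes of the augmented graph) are exactly the generalized APCG iterates on the dual variable $\lambda$ (indexed by edges), left-multiplied by $A$. The algorithm is written on the node side purely for computational efficiency, and the proof proceeds on the edge side.

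First, verify Assumption~\ref{assumption:gen_apcg} for problem~\eqref{eq:dual_problem}. The smooth part $q_A(\lambda) = \tfrac{1}{2}\lambda^\top A^\top \Sigma^\dagger A \lambda$ is constant on $\Ker(A)$ and $\sigma_A$-strongly convex on $\Ker(A)^\perp$ with $\sigma_A = \lambda_{\min}^+(A^\top \Sigma^\dagger A)$, so~\eqref{eq:sc_semi_norm} holds, and the smoothness matrix in~\eqref{eq:M_smoothness} is $M = A^\top \Sigma^\dagger A$. The separable non-smooth terms $\psi_{ij}$ live only on virtual edges $(i,j)$, and for any such edge $R_{ij} = e_{ij}^\top A^\dagger A e_{ij} = 1$ (this follows from the block structure of $A$ given in the paper: virtual edges are endpoints of distinct virtual-node blocks, so the corresponding columns of $A$ are orthogonal to all others in $\Ker(A)^\perp$). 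Corollary~\ref{corr:sc_apcg} with $\rho = \sqrt{\sigma_A}/S$ and $S^2 \geq \lambda_{\max}(A^\dagger A P_b^\dagger M P_b^\dagger A^\dagger A)$ for every block $b$ then yields
\[
\sigma_A\, \esp{\|v_t^\lambda - \lambda^\star\|^2_{A^\dagger A}} + 2\bigl[\esp{F_A^*(x_t^\lambda)} - F_A^*(\lambda^\star)\bigr] \leq C_0^{\rm apcg}(1 - \rho)^t,
\]
where $v_t^\lambda, x_t^\lambda$ denote the dual APCG iterates and $C_0^{\rm apcg} = \sigma_A\|v_0^\lambda - \lambda^\star\|^2_{A^\dagger A} + 2[F_A^*(0) - F_A^*(\lambda^\star)]$. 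Substituting $M$ into the $S^2$ bound yields exactly the condition on $\rho$ stated in~\eqref{eq:S}.

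Second, identify the ADFS variables with $A$ times the APCG iterates. Setting $x_t = A x_t^\lambda$, $y_t = A y_t^\lambda$, $v_t = A v_t^\lambda$, the generalized APCG gradient step $v_{t+1/2}^\lambda = w_t^\lambda - \eta P_b^\dagger \nabla q_A(y_t^\lambda) = w_t^\lambda - \eta P_b^\dagger A^\top \Sigma^\dagger y_t$, once multiplied by $A$, becomes $v_{t+1/2} = w_t - \eta A P_b^\dagger A^\top \Sigma^\dagger y_t$, which reduces to the $W_b \Sigma^\dagger y_t$ update in Algorithm~\ref{algo:sc_adfs} when $b$ is a communication block, and to the center--virtual edge update otherwise. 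Similarly, $x_{t+1}^\lambda = y_t^\lambda + \alpha_t P_b^\dagger A^\dagger A (v_{t+1}^\lambda - w_t^\lambda)$ becomes $x_{t+1} = y_t + \rho A P_b^\dagger A^\dagger (v_{t+1} - w_t) = y_t + \rho \tilde{W}_b (v_{t+1} - w_t)$. The proximal step for a virtual edge $(i,j)$ is the only nontrivial part: in edge variables it is $v_{t+1,ij}^\lambda = \prox_{\eta p_{ij}^{-1} \psi_{ij}}(v_{t+1/2,ij}^\lambda)$ with $\psi_{ij}(\cdot) = \tilde f^*_{ij}(-\mu_{ij}\,\cdot)$. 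Using $(A\lambda)^{(i,j)} = -\mu_{ij}\lambda_{ij}$ and the scaling identity $\prox_{\eta \mu_{ij}^2 p_{ij}^{-1} \tilde f^*_{ij}}$ shown on line~10, the update on the virtual node $(i,j)$ is precisely $A$ applied to the APCG prox step, and the center-node correction $v_{t+1}^{(i)} = z_{t+1}^{(i)} + \sum_j (z_{t+1}^{(i,j)} - v_{t+1}^{(i,j)})$ simply records that a change $\Delta\lambda_{ij}$ moves both endpoints of the virtual edge in $A\lambda$ with opposite signs.

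Third, convert the dual guarantee into a primal bound. By the KKT conditions of~\eqref{eq:primal_constrained}, the primal optimum satisfies $\theta^\star = \Sigma^\dagger A \lambda^\star$, and the algorithm returns $\theta_t = \Sigma^\dagger v_t = \Sigma^\dagger A v_t^\lambda$. Since $A = A A^\dagger A$,
\[
\|\theta_t - \theta^\star\|^2 = \|\Sigma^\dagger A A^\dagger A(v_t^\lambda - \lambda^\star)\|^2 \leq \lambda_{\max}(A^\top \Sigma^{-2} A)\, \|A^\dagger A(v_t^\lambda - \lambda^\star)\|^2.
\]
Taking expectations, dropping the nonnegative primal-suboptimality term from $C_0^{\rm apcg}$, and using $v_0^\lambda = 0$ so that $\|v_0^\lambda - \lambda^\star\|^2_{A^\dagger A} = \|A^\dagger A \lambda^\star\|^2$, we obtain
\[
\esp{\|\theta_t - \theta^\star\|^2} \leq \lambda_{\max}(A^\top \Sigma^{-2} A)\bigl[\|A^\dagger A \theta_A^\star\|^2 + 2\sigma_A^{-1}(F_A^*(0) - F_A^*(\theta_A^\star))\bigr](1 - \rho)^t,
\]
which matches $C_0(1-\rho)^t$ exactly. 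The main obstacle I anticipate is the second step: carefully verifying that every line of Algorithm~\ref{algo:sc_adfs} corresponds line-by-line to $A$ applied to an iteration of Algorithm~\ref{algo:generalized_apcg} on~\eqref{eq:dual_problem}, particularly the proximal update on virtual nodes and the accompanying center-node bookkeeping (since the APCG prox is defined coordinate-wise in $\lambda$, whereas ADFS operates on the image under $A$). The remaining algebra (strong convexity on a subspace, block sampling with heterogeneous probabilities) is already absorbed by the generality of Corollary~\ref{corr:sc_apcg}, which was designed precisely for this purpose.
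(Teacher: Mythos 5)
Your proof is correct and follows essentially the same route as the paper: verify Assumption~\ref{assumption:gen_apcg} for the dual problem~\eqref{eq:dual_problem}, observe that the ADFS iterates are the generalized-APCG iterates left-multiplied by $A$, invoke Corollary~\ref{corr:sc_apcg}, and then convert the $\|\cdot\|^2_{A^\dagger A}$-weighted dual bound to the primal error via $\theta^\star = \Sigma^\dagger A \theta_A^\star$ and the estimate $\|\Sigma^\dagger A u\|^2 \leq \lambda_{\max}(A^\top \Sigma^{-2}A)\|A^\dagger A u\|^2$. One small imprecision worth flagging: you assert $R_{ij} = e_{ij}^\top A^\dagger A e_{ij} = 1$ for virtual edges by appealing to the block structure of $A$, but because of the projectors $P_{ij}$ this identity actually fails for general $\theta \in \R^d$; the paper handles this carefully in Lemma~\ref{lemma:acrossa}, showing $A^\dagger A(e_{ij}\otimes\theta) = e_{ij}\otimes\theta$ only for $\theta \in \Ker(P_{ij})^\perp$, and then notes that the iterates $v_t^{(i,j)}$ automatically lie in $\Ker(P_{ij})^\perp$, so the restriction is harmless. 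Your argument would need this refinement to be airtight, but otherwise it matches the paper's proof step for step.
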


We now quickly discuss the convergence rate of ADFS, and present the basic derivations required to obtain Algorithm~\ref{algo:sc_adfs}, as well as the proof Theorem~\ref{thm:rate_adfs}. 

\paragraph{Convergence rate} The parameter $\rho$ controls the convergence rate of ADFS. It is defined by the minimum of the individual rates for each block, which involves the spectrum of a product of matrices related to the regularity of the local functions ($\Sigma^{\dagger}$), to the graph ($A$) and to the sampling scheme ($P_b^\dagger$). Note that Theorem~\ref{thm:rate_adfs} recovers the asynchronous version of ADFS~\cite{hendrikx2019accelerated} if only one coordinate is sampled at each step. Relations are more complex in the general case, which is why simple scalar expressions are replaced by the spectrum of products of matrices in this paper. In Section~\ref{sec:perfs}, we carefully choose the free parameters $\mu_{k\ell}$ and $p_{k\ell}$ to get the best convergence speed.

\paragraph{Projection of virtual edges} We need to verify that Assumption~\ref{assumption:gen_apcg} is respected in order to be able to use Theorem~\ref{thm:gen_apcg} to derive Theorem~\ref{thm:rate_adfs}. In particular, for any edge $(k, \ell)$, either the proximal part $\psi_{k\ell} = 0$ or the dual coordinate is such that for all $\theta \in \R^d$, $(e_{k\ell}^\top \otimes \theta) A^\dagger A (e_{k\ell} \otimes \theta) = 1$, which is equivalent to having $A^\dagger A (e_{k\ell} \otimes \theta) = (e_{k\ell} \otimes \theta)$. In our case, $\psi_{k\ell} = 0$ when $(k,\ell)$ is a communication edge. The condition is actually not verified for virtual edges in our formulation since we introduce the projectors $P_{ij}$. Yet, we do not need this to hold for any $\theta \in \R^d$. Indeed, the updates of Algorithm~\ref{algo:sc_adfs} are such that $v_t^{(ij)} \in \Ker(P_{ij})^\perp$ for all $t$ and $(i,j)$, so we only need $A^\dagger A (e_{ij} \otimes \theta) = (e_{ij} \otimes \theta)$ to hold for $\theta \in \Ker(P_{ij})^\perp$. Lemma~\ref{lemma:acrossa} shows that the projection condition is satisfied by virtual edges.
\begin{lemma}
\label{lemma:acrossa}
$A^\dagger A (e_{ij} \otimes \theta) = e_{ij} \otimes \theta$ for all virtual edges $(i,j)$ and $\theta \in \Ker(P_{ij})^\perp$.
\end{lemma}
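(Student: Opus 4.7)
The plan is to use the fact that $A^\dagger A$ is the orthogonal projector onto $\Ker(A)^\perp$, so the claim $A^\dagger A (e_{ij} \otimes \theta) = e_{ij} \otimes \theta$ is equivalent to showing that $e_{ij} \otimes \theta \in \Ker(A)^\perp$ whenever $\theta \in \Ker(P_{ij})^\perp$. Equivalently, for any edge-indexed vector $u = \sum_{(k,\ell)} e_{k\ell} \otimes u_{k\ell}$ in $\Ker(A)$, we need $\langle u, e_{ij} \otimes \theta\rangle = \langle u_{ij}, \theta\rangle = 0$ for every $\theta \in \Ker(P_{ij})^\perp$, i.e.\ $u_{ij} \in \Ker(P_{ij})$.

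The key structural observation is that in the augmented graph, a virtual node $(i,j)$ is a leaf: the unique edge incident to it is the virtual edge $(i,j)$ itself. From the defining identity $A(e_{k\ell} \otimes x) = \mu_{k\ell}[(e^{(k)} - e^{(\ell)}) \otimes P_{k\ell} x]$, the component of $Au$ at node $(i,j)$ therefore collects a contribution from the single edge $(i,j)$, giving $(Au)^{(i,j)} = -\mu_{ij} P_{ij} u_{ij}$. If $Au = 0$, then in particular $(Au)^{(i,j)} = 0$, so $P_{ij} u_{ij} = 0$, which is exactly $u_{ij} \in \Ker(P_{ij})$.

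Combining these two steps finishes the argument: for any $u \in \Ker(A)$ and any $\theta \in \Ker(P_{ij})^\perp$, the leaf-node computation forces $u_{ij} \in \Ker(P_{ij})$, so $\langle u, e_{ij} \otimes \theta\rangle = \langle u_{ij}, \theta\rangle = 0$. Hence $e_{ij} \otimes \theta \perp \Ker(A)$, and applying the projector $A^\dagger A$ leaves it invariant.

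The only non-trivial ingredient is the leaf structure of the augmented graph, which is really what makes the ``$R_i = 1$ or $\psi_i = 0$'' dichotomy of Assumption~\ref{assumption:gen_apcg} hold in this formulation: without the virtual star construction one could have several edges touching a data-function node, and the $(i,j)$ block of $Au$ would mix several $u_{k\ell}$'s, so $Au = 0$ would no longer imply $u_{ij} \in \Ker(P_{ij})$. I do not expect a real obstacle here, since once the structure of $A$ is unpacked the proof is essentially a one-line consequence of the leaf property.
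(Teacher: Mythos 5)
Your proof is correct, and it rests on the same structural fact as the paper's — that a virtual node $(i,j)$ is a leaf of the augmented graph, attached only by the virtual edge $(i,j)$ — but it uses that fact more directly. The paper argues via the cycle space of the incidence matrix: it takes a rank-one vector $x \otimes \theta$ in $\Ker(A)$, observes that the support of $x$ must be a (union of) cycle(s), and notes that virtual edges, being leaves, cannot lie on a cycle. You instead read off the $(i,j)$-node block of $Au$ for an arbitrary $u \in \Ker(A)$: since the only edge incident to node $(i,j)$ is the virtual edge itself, $(Au)^{(i,j)} = -\mu_{ij} P_{ij} u_{ij}$, and $Au = 0$ forces $u_{ij} \in \Ker(P_{ij})$. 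This has two small advantages over the paper's phrasing: it avoids the cycle-space detour (no appeal to a characterization of $\Ker$ of a graph incidence operator), and it works with a completely general element $u$ of $\Ker(A)$ rather than a rank-one tensor product $x \otimes \theta$ — the latter do not span $\Ker(A)$ in general, so your argument closes a loose end in the paper's proof. The orthogonality conclusion $\langle u, e_{ij} \otimes \theta\rangle = \langle u_{ij}, \theta\rangle = 0$ and the identification of $A^\dagger A$ as the orthogonal projector onto $\Ker(A)^\perp$ then give the result exactly as in the paper.
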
%\vspace{-10pt}
\begin{proof}
Let $\theta \in \Ker(P_{ij})^\perp$, and $x \in \mathbb{R}^{E + nm}$ such that $A(x \otimes \theta) = 0$. From the definition of $A$, either $x = 0$ or the support of $x$ is a cycle of the graph. Indeed, for any edge $(k,\ell)$, $A (e_{k\ell} \otimes \theta)$ has non-zero weights only on nodes $k$ and $\ell$. Virtual nodes have degree one, so virtual edges are part of no cycles and therefore $x^\top e_{k,\ell} = 0$ for all virtual edges $(k,\ell)$. Operator $A^\dagger A$ is the projection operator on the orthogonal the kernel of $A$, so it is equal to $P_{ij}$ on virtual edges, and $P_{ij} \theta = \theta$.
\end{proof}

\paragraph{Obtaining Line 6} The form of the communication update (virtual or not) of line 6 in Algorithm~\ref{algo:sc_adfs} comes from the fact that the update of block $b$ writes $A P_b^\dagger \nabla q_A(y_t) = A P_b^\dagger A^\top \Sigma^{\dagger} y_t = W_b \Sigma^{\dagger} y_t$.

\paragraph{Obtaining the proximal formulation of Lines 10 and 11} Algorithm~\ref{algo:sc_adfs} is obtained by directly applying Algorithm~\ref{algo:generalized_apcg} on the dual problem of Equation~\eqref{eq:dual_problem}. Then, all lines are multiplied by $A$ on the left in order to switch from dual variables in $\R^{E + nm}$ associated with edges to primal variables in $\R^{n + nm}$ associated with nodes, which is a standard transformation~\citep{scaman2017optimal,hendrikx2018accelerated}. Yet, APCG uses a proximal step, which is a non-linear operation, and the transformation is not straightforward in this case. We now present the derivations leading to Algorithm~\ref{algo:sc_adfs}, which are the same as in the conference version~\citep{hendrikx2019accelerated}. More specifically, we note $\tilde{v}_t \in \R^{E + nm}$ the dual variable and $v_t = A\tilde{v}_t$ the primal variable of Algorithm~\ref{algo:sc_adfs}. We use the same notations for the other variables. We know from applying APCG that:
\begin{equation}
\label{eq:dual_prox}
    \tilde{v}_{t+1}^{(i,j)} = {\rm prox}_{\eta p_{ij}^{-1} \psi_{ij}}\left(\tilde{z}_{t+1}^{(i,j)}\right) .
\end{equation}
Since $(i,j)$ are coordinates associated with virtual edges, we also know that $(A\tilde{w}_t)^{(i,j)} = - \mu_{ij}\tilde{w}_t^{(i,j)}$. Therefore, Equation~\eqref{eq:dual_prox} can be rewritten as: 
\begin{equation}
    v_{t+1}^{(i,j)}  = - \mu_{ij} {\rm prox}_{\eta p_{ij}^{-1} \psi_{ij}}\left(-\frac{1}{\mu_{ij}}z_{t+1}^{(i,j)} \right), 
\end{equation}
which only involves primal variables. We can further rewrite this equation in the simpler form of Line 10 by using a change of variables to write that:
\begin{align*}
    - \mu_{ij} \arg\min_v &\frac{1}{2\eta p_{ij}^{-1}}\|v - \left(- \frac{1}{\mu_{ij}} z_{t+1}^{(i,j)} \right)\|^2 +  \tilde{f^*_{ij}}(-\mu_{ij}v) \\ &=\arg\min_{\tilde{v}} \frac{1}{2\eta p_{ij}^{-1}\mu_{ij}^2}\|\tilde{v} -  z_{t+1}^{(i,j)} \|^2 + \tilde{f^*_{ij}}(\tilde{v}).
\end{align*}

Finally, Line 11 is obtained by remarking that the proximal step is only performed for coordinates associated with virtual edges and therefore that since $v_{t+1}^{(i,j)} = z_{t+1}^{(i,j)}$ if $i \notin b_t$,
$$v_{t+1}^{(i)} - z_{t+1}^{(i)} = \sum_{j \in \mathcal{N}(i)} \mu_{i,j} (\tilde{v}_{t+1}^{(i,j)} - z_{t+1}^{(i,j)}) = - \sum_{j, \ (i,j) \in b_t} (v_{t+1}^{(i,j)} - z_{t+1}^{(i,j)}).$$

We have justified in the remarks above that Algorithm~\ref{algo:sc_adfs} is indeed the direct application of the efficient implementation of Algorithm~\ref{algo:generalized_apcg} to the Problem of Equation~\eqref{eq:dual_problem}, which verifies Assumption~\ref{assumption:gen_apcg}. Then, Theorem~\ref{thm:rate_adfs} is a corollary of Corollary~\ref{corr:sc_apcg}, as shown below.

\begin{proof}[Proof of Theorem~\ref{thm:rate_adfs}]
Following \citet{lin2015accelerated}, and noting $q: x \mapsto \frac{1}{2} x^\top \Sigma^{\dagger} x$, the primal optimal point $\theta^\star$ can be retrieved as $\theta^\star = \nabla q (A\theta^\star_A) = \Sigma^{\dagger}A\theta^\star_A$, where $\theta^\star_A$ is the optimal dual parameter. Finally, $$\lambda_{\max}(A^\top\Sigma^{-2}A)^{-1} \|\theta_t - \theta^\star\|^2 \leq \lambda_{\max}(A^\top\Sigma^{-2}A)^{-1} \|\Sigma^{\dagger}A(\tilde{\theta}_t - \theta^\star_A)\|^2 \leq \|\tilde{\theta}_t - \theta^\star_A\|^2_{A^\dagger A},$$ where $\tilde{\theta}_t = \phi^{K+1} \tilde{u}_t + \tilde{z}_t$. Finally, the control on $\|\tilde{\theta}_t - \theta^\star_A\|^2_{A^\dagger A}$ is given by Corollary~\ref{corr:sc_apcg}. Note that APCG also gives a guarantee in terms of dual function values but we drop it in order to have a simpler statement. 
\end{proof}

\subsection{Implementation details}

We discuss several aspects related to the implementation of Algorithm~\ref{algo:sc_adfs} below, and provide its Python implementation in supplementary material.

\paragraph{Primal proximal updates}
The proximal step of Line 10 is performed with the function $\tilde{f^*}_{i,j}: x \rightarrow f_{i,j}^*(x) - \frac{1}{2L_{i,j}}\|x\|^2$ instead of $f_{i,j}$. Yet, Moreau identity~\citep{parikh2014proximal} provides a way to retrieve the proximal operator of $f^*$ using the proximal operator of $f$, but this does not directly apply to $\tilde{f}^*_{i,j}$, making its proximal update hard to compute when no analytical formula is available to compute $\tilde{f}^*_{i,j}$. Fortunately, the proximal operator of $\tilde{f}^*_{i,j}$ can be retrieved from the proximal operator of $f^*_{i,j}$. Following the derivations from the conference paper~\citep{hendrikx2019accelerated}, we now show how to implement Algorithm~\ref{algo:sc_adfs} in a primal-only way. More specifically, if we denote $\tilde \eta_{ij} = \eta \mu_{ij}^2 p_{ij}^{-1}$ then for any $x \in \R^{n + nm}$, we can also express the update only in terms of $f_{i,j}^*$:
\begin{align*}
    {\rm prox}_{\tilde{\eta}_{ij} \tilde{f}^*_{i,j}}\left(x\right) &= \arg\min_v \frac{1}{2\tilde{\eta}_{ij}} \|v - x\|^2 + f^*_{i,j}(v) - \frac{1}{2L_{i,j}}\|v\|^2\\
    &= \arg\min_v \frac{1}{2}\left(\tilde{\eta}_{ij}^{-1} - L_{i,j}^{-1}\right)\|v\|^2 - \tilde{\eta}_{ij}^{-1} v^\top x + f^*_{i,j}(v) \\
    &= \arg\min_v \frac{1}{2\left(\tilde{\eta}_{ij}^{-1} - L_{i,j}^{-1}\right)^{-1}}\|v - \left(1 - \tilde{\eta}_{ij} L_{i,j}^{-1}\right)^{-1} x\|^2 + f^*_{i,j}(v) \\
    &= {\rm prox}_{\left(\tilde{\eta}_{ij}^{-1} - L_{i,j}^{-1}\right)^{-1} f^*_{i,j}}\left(\left(1 - \tilde{\eta}_{ij} L_{i,j}^{-1}\right)^{-1} x\right).
\end{align*}
Then, we use the identity:
\begin{equation}
    {\rm prox}_{\left(\eta f\right)^*}(x) = \eta {\rm prox}_{\eta^{-1} f^*}\left(\eta^{-1} x\right),
\end{equation}
and the Moreau identity leads to:
\begin{equation}
    {\rm prox}_{\eta f^*}(x) = x -  \eta {\rm prox}_{\eta^{-1} f}\left(\eta^{-1} x\right).
\end{equation}

This allows us to retrieve the proximal operator on $\tilde{f}^*_{i,j}$ using only the proximal operator on $f_{i,j}$:
\begin{equation}
    \left(1 - \tilde{\eta}_{ij} L_{i,j}^{-1}\right) {\rm prox}_{\tilde{\eta}_{ij} \tilde{f}^*_{i,j}}\left(x\right) = x - \tilde{\eta}_{ij} {\rm prox}_{\left(\tilde{\eta}_{ij}^{-1} - L_{i,j}^{-1}\right) f_{i,j}} \left(\tilde{\eta}_{ij}^{-1} x\right).
\end{equation}

Note that the previous calculations are valid as long as  $\tilde{\eta}_{ij} L_{i,j}^{-1} \leq 1$ for all virtual edges. Using the same values for $\mu_{ij}^2$ as in Assumption~\ref{assumption:parameters_choice}, and using the fact that $\eta = \rho / \sigma_A = 2\rho / \alpha$, this condition writes $ 2\rho \leq p_{ij}$ for all virtual edges $(i,j)$. By definition of $\rho$ we have $\rho \leq p_{ij} / \sqrt{2(1 + L_{ij}/\sigma_i)}$, so this constraint simply makes $\rho$ smaller by a $\sqrt{2}$ factor in the worst case (and does not change anything as long as $L_{ij} > \sigma_i$ for all $j$). In the case of Algorithm~\ref{algo:sc_adfs}, the update of Line 10 can be rewritten:
$$v_{t+1}^{(i,j)} = \left(\tilde{\eta}_{ij}^{-1} - L_{ij}^{-1}\right)^{-1}\left[\tilde{\eta}_{ij}^{-1}z_{t+1}^{(i,j)} - \prox_{(\tilde{\eta}_{ij}^{-1} - L_{ij}^{-1})f_{i,j}}\left( \tilde{\eta}_{ij}^{-1} z_{t+1}^{(i,j)}\right) \right].$$

\paragraph{Communications}
Communications in Algorithm~\ref{algo:sc_adfs} are abstracted by multiplication by the Matrix $W_b$ for a given batch of coordinates $b$. Note that if $b$ is a set of virtual edges then no communications in the network are required since $W_b$ only requires information exchange between central nodes and their virtual nodes. The formulation of ADFS suggests that another communication round using the matrix $\tilde{W_b}$ is required for the actual update. Yet, if $b_t$ is such that $P_b^\dagger A^\dagger A P_b^\dagger$ is a diagonal matrix, then $\tilde{W}_b W_b$ can be performed with only one round of communications. This is the case for example if $b_t$ is a set of virtual edges (then no communications are actually required). If $b_t$ is the set of all communication edges, then $\tilde{W}_{b_t} = P_{b_t}^\dagger$ since $\pcomm P_{b_t}^\dagger$ is the identity on ${\rm Ker}(A)^\perp$ so no extra communication is required in this case either.\vspace{3pt}

\paragraph{Sparse updates} The way sequences $u_{t+1}$ and $z_{t+1}$ are updated means that the only nodes that are updated at time $t$ are those for which $(I - \rho \tilde{W}_{b_t})h_t$ is non-zero. Since $h_t$ is very sparse, it in particular means that the parameters of virtual nodes only need to be updated when their function is needed for an update. This would not be the case if we had used the formulation of Algorithm~\ref{algo:generalized_apcg} directly.\vspace{3pt}

\paragraph{Linear case}
For many standard machine learning problems, $f_{i,j}(\theta) = \ell(X_{i,j}^\top\theta)$ with $X_{i,j} \in \mathbb{R}^d$. This implies that $f_{i,j}^*(\theta) = + \infty$ whenever $\theta \notin {\rm Span}\left(X_{i,j}\right)$. Therefore, the proximal steps on the Fenchel conjugate only have support on $X_{i,j}$, meaning that they are one-dimensional problems that can be solved in constant time using for example the Newton method when no analytical solution is available. Warm starts (initializing on the previous solution) can also be used for solving the local problems even faster so that in the end, a one-dimensional proximal update is only a constant time slower than a gradient update. Note that this also allows to store parameters $v_t$ and $y_t$ as  scalar coefficients for virtual nodes, thus greatly reducing the memory footprint of ADFS. Finally, the projectors are equal to $P_{ij} = X_{ij} X_{ij}^\top / \|X_{ij}\|^2$ in this case which, as we will see, implies that $\kappa_\comm = \kappa_b$ when $\sigma_i = \sigma$ for all $i$.\vspace{3pt}

\paragraph{Sparse Communications} The communications in Algorithm~\ref{algo:sc_adfs} require sending the full local vector of each node, which can be very expensive when the dimension of the dataset is very high. This limitation is quite hard to bypass since ADFS relies on model averaging. One trick that can be used was introduced along with the DSBA algorithm~\citet{shen2018towards}, and consists in transmitting the updates $h_t$ instead of the local parameter. The other nodes can then emulate the updates as long as they know the gossip matrix. Yet, this approach increases local computations and storage by a significant margin (all nodes need to keep track of and compute the parameters of the other nodes in the network). Besides, it requires extremely sparse datasets to yield significant gains since all nodes eventually need to know the full increment $h_t$. Yet, this can be useful when communications are more frequent than computations or when the network is small. We do not elaborate more on this trick since it can be directly adapted from the original DSBA paper~\citet{shen2018towards}. \vspace{3pt}

\paragraph{Unbalanced local datasets} We assume that all local datasets are of fixed size $m$ in order to ease reading. Yet, the impact of the value of $m$ on Algorithm~\ref{algo:sc_adfs} is indirect, and unbalanced datasets can be handled without any change. \vspace{3pt}

\paragraph{Natural Strong Convexity} ADFS is derived when strong convexity is obtained through L2 regularization. It is possible to generalize this to arbitrary strongly convex functions $\omega_i$ by simply replacing $\frac{1}{2\sigma_i}\| \cdot \|^2$ by $\omega_i^*$, and performing the same derivations. Yet, we chose to focus on the L2 regularization case to ease reading of the paper. \vspace{3pt}

\subsection{Non-smooth setting}
The accelerated proximal coordinate gradient algorithm can be applied to the problem of Equation~\eqref{eq:dual_problem} even if the function $q_A$ is not strongly convex on ${\rm Ker}(A)^\perp$. This is for example the case when the functions $f_{i,j}$ are not smooth so that $\Sigma^{\dagger}$ has diagonal blocks equal to 0 and therefore ${\rm Ker}(A^\top\Sigma^{\dagger}A) \not\subset {\rm Ker}(A)$ so $\sigma_A = 0$. In this case, the choice of coefficients from Corollary~\ref{corr:cvx_apcg} leads to Algorithm~\ref{algo:ns_adfs}, a formulation of ADFS that provides error guarantees when primal functions $f_{i,j}$ are not smooth. More formally, if we define $F^*: x \rightarrow \sum_{i=1}^n \bigg[\sum_{j = 1}^m f_{i,j}^*\left(x^{(i,j)}\right) + \frac{1}{2\sigma_i} \| x^{(i)} \|^2 \bigg]$, then, we have:
\begin{theorem}
\label{thm:adfs_non_smooth}
If the functions $f_{i,j}$ are non-smooth then NS-ADFS guarantees:
$$ \esp{F^*(x_t)} - F^*(\theta^\star) \leq \frac{2}{t^2} \left[\frac{S^2}{\lambda_{\min}^+(A^\top A)} r_t^2 + \frac{6}{p_{\min}^2}\left[F^*(x_0) - F^*(\theta^\star)\right]\right],$$
with $S^2 = \max_b \lambda_{\max}(A^\dagger A P_b^\dagger A^\top \Sigma^{\dagger} A P_b^\dagger A^\dagger A)$, $r_t^2 = \|v_0 - \theta^\star\|^2 - \|v_t - \theta^\star\|^2$ and $p_{\min}$ is taken over virtual edges.
\end{theorem}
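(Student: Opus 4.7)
The plan is to derive Theorem~\ref{thm:adfs_non_smooth} by applying the convex version of our generalized APCG (Corollary~\ref{corr:cvx_apcg}) to the dual problem of Equation~\eqref{eq:dual_problem}, exactly as Theorem~\ref{thm:rate_adfs} was obtained from Corollary~\ref{corr:sc_apcg} in the strongly convex case. The starting observation is that when the $f_{i,j}$ are non-smooth, the conjugates $f_{i,j}^*$ are not strongly convex, so we cannot transfer any quadratic term to $q_A$; hence the virtual-block diagonal of $\Sigma^{\dagger}$ is zero, $\Ker(A^\top \Sigma^{\dagger} A) \not\subset \Ker(A)$, and therefore $\sigma_A = 0$. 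In this regime $\psi_{ij}$ reduces to (a rescaling of) $f_{ij}^*$ and Assumption~\ref{assumption:gen_apcg} continues to hold thanks to Lemma~\ref{lemma:acrossa}. The constant $S^2$ of Corollary~\ref{corr:cvx_apcg} then reads $S^2 \geq \lambda_{\max}(A^\dagger A P_b^\dagger A^\top \Sigma^{\dagger} A P_b^\dagger A^\dagger A)$, which matches the statement of the theorem.

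Next, I would translate the dual guarantee into a node-space statement. The node iterates of NS-ADFS are obtained from the dual iterates $\tilde{x}_t, \tilde{v}_t$ of APCG by left multiplication with $A$, and a direct computation shows $q_A(\tilde{x}) + \sum_{i,j}\psi_{ij}(\tilde{x}_{ij}) = F^*(A\tilde{x})$, so the dual objective expressed on the node variable $x = A\tilde{x}$ is exactly $F^*$. Identifying $\theta^\star = A\tilde{\theta}^\star$, the left-hand side of Corollary~\ref{corr:cvx_apcg} becomes $\mathbb{E}[F^*(x_t)] - F^*(\theta^\star)$ and the initial-gap term $F(x_0) - F(\theta^\star)$ becomes $F^*(x_0) - F^*(\theta^\star)$. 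The remaining work is to replace the dual norm $\|\tilde{v}_0 - \tilde{\theta}^\star\|^2_{A^\dagger A}$ appearing inside $r_t^2$ by its node-space counterpart. Choosing $\tilde{\theta}^\star$ in $\Ker(A)^\perp$ (permissible since the problem depends on $\tilde{\theta}^\star$ only through $A\tilde{\theta}^\star$, and the APCG updates keep $\tilde{v}_t \in \Ker(A)^\perp$ since $\tilde{v}_0 = 0$), one has
\begin{equation*}
\|\tilde{v}_0 - \tilde{\theta}^\star\|^2_{A^\dagger A} = \|\tilde{v}_0 - \tilde{\theta}^\star\|^2 \leq \frac{\|A(\tilde{v}_0 - \tilde{\theta}^\star)\|^2}{\lambda_{\min}^+(A^\top A)} = \frac{\|v_0 - \theta^\star\|^2}{\lambda_{\min}^+(A^\top A)},
\end{equation*}
and the same bound transports to $r_t^2$, which accounts for the factor $\lambda_{\min}^+(A^\top A)^{-1}$ in front of $S^2 r_t^2$.

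The hard part is the bookkeeping on constants and on the subspace invariance. First, one must verify carefully that $v_t$, $z_t$, $y_t$ and $x_t$ all remain in the image of $A$ throughout the iterations so that the identity $F(\tilde{x}_t) = F^*(x_t)$ holds uniformly in $t$; this is what ultimately justifies writing $r_t^2$ in node-space norm. Second, tracking the multiplicative constant in front of $[F^*(x_0) - F^*(\theta^\star)]$ is the most delicate step: Corollary~\ref{corr:cvx_apcg} only yields a factor $2/p_{\min}^2$, whereas the theorem states $6/p_{\min}^2$. This inflation arises from combining the dual-to-primal change of variables with the norm conversion above, and in particular from needing to absorb cross terms that appear when one expresses the Lyapunov quantity at time $t$ in node variables rather than dual variables; closing this gap cleanly without losing the $1/t^2$ rate is the key technical point, and it mirrors the analogous step that introduces $\lambda_{\max}(A^\top \Sigma^{-2} A)$ in the strongly convex proof of Theorem~\ref{thm:rate_adfs}.
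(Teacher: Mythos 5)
The paper does not actually spell out a proof of Theorem~\ref{thm:adfs_non_smooth}; it only says that Corollary~\ref{corr:cvx_apcg} ``leads to'' NS-ADFS. Your route --- apply the convex-case APCG corollary to the dual problem with $\sigma_A=0$, identify $q_A(\tilde x)+\sum_{i,j}\psi_{ij}(\tilde x_{ij})=F^*(A\tilde x)$, pass to node variables via left multiplication by $A$, and convert $\|\cdot\|_{A^\dagger A}$ to the node-space norm through $\lambda_{\min}^+(A^\top A)$ --- is clearly the intended one and is essentially correct. Two remarks on the remaining pieces.

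First, the constant: Corollary~\ref{corr:cvx_apcg} (with $\alpha_0=p_{\min}$ as used in its proof and in Algorithms~\ref{algo:generalized_apcg_eff_c} and~\ref{algo:ns_adfs}) delivers $2/p_{\min}^2$, not $6/p_{\min}^2$, and there is no step in the dual-to-primal change of variables or in the norm conversion that acts multiplicatively on the $F^*(x_0)-F^*(\theta^\star)$ term --- that term passes through unchanged because $F^*(A\tilde x_t)$ equals the dual objective exactly. You correctly notice the mismatch but attribute it to ``cross terms'' to be absorbed; there are no such cross terms on that piece. The honest conclusion is that your derivation already gives the stated bound with the tighter constant $2/p_{\min}^2$, so Theorem~\ref{thm:adfs_non_smooth} as written is a valid but loose restatement (the $6$ is most plausibly a carry-over or typo, not something you should try to reproduce). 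Calling this the ``key technical point'' overstates it.

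Second, the invariance claim ``the APCG updates keep $\tilde v_t\in\Ker(A)^\perp$'' is not established and is in fact doubtful: the step $\tilde v_{t+1/2}=\tilde v_t-\eta_t P_b^\dagger\nabla q_A(y_t)$ involves $P_b^\dagger$, which rescales selected coordinates by $1/p_i$ and zeros the rest, and for communication edges with $R_i<1$ this can leave $\Ker(A)^\perp$. Fortunately you do not need it: the corollary's $r_t^2$ is already in the $\|\cdot\|_{A^\dagger A}$ semi-norm, so write
\begin{equation*}
\|\tilde v_0-\tilde\theta^\star\|^2_{A^\dagger A}\leq\frac{\|v_0-\theta^\star\|^2}{\lambda_{\min}^+(A^\top A)},
\qquad
\|\tilde v_t-\tilde\theta^\star\|^2_{A^\dagger A}\geq\frac{\|v_t-\theta^\star\|^2}{\lambda_{\max}(A^\top A)},
\end{equation*}
(the first uses $\tilde\theta^\star\in\Ker(A)^\perp$ and $\tilde v_0=0$, the second uses only $\|A w\|^2\leq\lambda_{\max}(A^\top A)\|A^\dagger A w\|^2$). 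Since $r_t^2\geq 0$, dropping the second term already yields $S^2\cdot(\text{dual }r_t^2)\leq S^2\|v_0-\theta^\star\|^2/\lambda_{\min}^+(A^\top A)$, which dominates the statement's $\frac{S^2}{\lambda_{\min}^+(A^\top A)}r_t^2$. This replaces the shaky invariance argument by an elementary eigenvalue bound and closes the proof.
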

The guarantees provided by Theorem~\ref{thm:adfs_non_smooth} are weaker than in the smooth setting. In particular, we lose linear convergence and get the classical  accelerated sublinear $O(1/t^2)$ rate. We also lose the bound on the primal parameters--- recovering primal guarantees is beyond the scope of this work. Note that the extra $\lambda_{\min}^+(A^\top A)$ term comes from the fact that Theorem~\ref{thm:adfs_non_smooth} is formulated with primal parameter sequences $x_t = A \tilde x_t$. Also note that $\alpha_t = \grando{t^{-1}}$.

\begin{algorithm}
\caption{NS-ADFS}
\label{algo:ns_adfs}
\begin{algorithmic}[1]
\STATE $\alpha_0 = \min_{\text{virtual edges } (i,j)} p_{ij}$, $\eta_{t} = \frac{1}{\alpha_t S^2}$, $W_b = A_\comm P_b^\dagger A_\comm^\top$, $\tilde{W}_b = A_\comm P_b^\dagger A^\dagger_\comm$, $x_0 = 0$, $v_0 = 0$, $t = 0$ \COMMENT{Initialization}
\WHILE{$t < T$}
\STATE $y_t = (1 - \alpha_t)x_t + \alpha_t v_t$
\STATE Sample block of edges $b$ \COMMENT{Edges sampled from the augmented graph}
\STATE $v_{t+1} = z_{t+1} = v_t - \eta_t W_b \Sigma^{\dagger}y_t$
\COMMENT{Communication abstracted by the matrix $W_b$}
\IF{$b$ is a block of virtual edges}
\FOR{$i = 1$ to $n$}
\FOR{$j$ such that $(i,j) \in b$}
\STATE $v_{t+1}^{(i,j)} = {\rm prox}_{\eta_t \mu_{ij}^2 p_{ij}^{-1} f^*_{i,j}}\left(z_{t+1}^{(i,j)}\right)$
\STATE $v_{t+1}^{(i)} = z_{t+1}^{(i)} + z_{t+1}^{(i,j)} - v_{t+1}^{(i,j)}$
\ENDFOR
\ENDFOR
\ENDIF
\STATE $x_{t+1} = y_t + \alpha_t \tilde{W}_b(v_{t+1} - v_t)$
\STATE $\alpha_{t+1} = \frac{\sqrt{\alpha_t^4 + 4\alpha_t^2} - \alpha_t^2}{2}$
\ENDWHILE
\STATE \textbf{return} $\theta_t = \Sigma^{\dagger}v_t$
\end{algorithmic}
\end{algorithm}

\subsection{Performances and Parameters Choice in the Homogeneous Setting}
\label{sec:perfs}
We now prove the time to convergence of \adfs~presented in Table~\ref{fig:table_speeds}, and detail the conditions under which it holds. Indeed, Section~\ref{sec:alg} presents \adfs~in full generality but the different parameters have to be chosen carefully to reach optimal speed. In particular, we have to choose the coefficients $\mu$ to make sure that the graph augmentation trick does not cause the smallest positive eigenvalue of $A^\top \Sigma^{\dagger} A$ to shrink too much, which is done by Lemma~\ref{lemma:main_sigma_A_lb}. 

\begin{assumption}[Parameters choice]
\label{assumption:parameters_choice}
For arbitrary $\mu_{k\ell}$ and for all communication edges, we denote $L = A_{\rm comm} A_{\rm comm}^\top \in \mathbb{R}^{n \times n}$ the Laplacian of the communication graph. Let $D_M$ and $\tilde{D}_M$ be the diagonal matrices such that $(D_M)_{ii} = \sigma_i + \lambda_{\max}\left(\sum_{j=1}^m L_{i,j} P_{ij}\right)$ and $(\tilde{D}_M)_{ii} = \sigma_i + 2\lambda_{\max}\left(\sum_{j=1}^m L_{i,j} P_{ij}\right)$. The local condition number of node $i$ is $\kappa_i = (D_M)_{ii} / \sigma_i$, and we choose the weights of virtual edges as $\mu_{ij}^2 = \alpha L_{i,j}$, with $\alpha = 2 \lambda_{\min}^+(A_\comm^\top \tilde{D}_M^{-1} A_\comm)$, and their probabilities as $p_{ij} = p_{\rm comp} (1 + L_{i,j} \sigma_i^{-1})^{\frac{1}{2}} / S_i$ with $S_i = \sum_{j=1}^m (1 + L_{i,j} \sigma_i^{-1})^{\frac{1}{2}}$ the normalizing constant for node $i$.
\end{assumption}

This choice of parameters allows to tightly bound $\lambda_{\min}^+(A^\top\Sigma^{\dagger}A)$, which defines the rate of convergence of ADFS.

\begin{lemma}
\label{lemma:main_sigma_A_lb} 
If Assumption~\ref{assumption:parameters_choice} holds, then for any $x \in \R^{E + nm}$ we have
$$\|x\|^2_{A^\top \Sigma^{\dagger} A} \geq \lambda_{\min}^+(A_\comm^\top \tilde{D}_M^{-1} A_\comm) \|x\|^2_{A^\dagger A}.$$
In particular, $\sigma_A \geq \alpha/2$.
\end{lemma}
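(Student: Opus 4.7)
The plan is to expand the quadratic form $\|x\|^2_{A^\top \Sigma^\dagger A}$ in terms of the block decomposition $x = (x_\comm, x_v)$ with $x_\comm \in \R^{Ed}$ and $x_v \in \R^{nmd}$, then lower bound it by $\frac{\alpha}{2}\|x\|^2_{A^\dagger A}$. Using the block structure of $A$ displayed earlier, together with $\mu_{ij}^2 = \alpha L_{i,j}$ and $P_{ij}P_{ij}^\dagger P_{ij} = P_{ij}$, a direct computation gives
\begin{equation*}
\|x\|^2_{A^\top\Sigma^\dagger A} \;=\; \sum_i \sigma_i^{-1}\Big\|(A_\comm x_\comm)^{(i)} + \sum_j \mu_{ij} P_{ij} x_v^{(i,j)}\Big\|^2 \;+\; \alpha \sum_{i,j} \|P_{ij} x_v^{(i,j)}\|^2.
\end{equation*}

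The key step is then to apply, for each $i$, the scalar inequality $\|a+b\|^2 \geq (1-\varepsilon_i)\|a\|^2 + (1-\varepsilon_i^{-1})\|b\|^2$ to the $i$-th communication block, combined with the operator norm bound $\|\sum_j \mu_{ij} P_{ij} x_v^{(i,j)}\|^2 \leq \lambda_{\max}\big(\sum_j \mu_{ij}^2 P_{ij}\big) \sum_j \|P_{ij} x_v^{(i,j)}\|^2 = \alpha L_b(i) \sum_j \|P_{ij} x_v^{(i,j)}\|^2$, where $L_b(i) = \lambda_{\max}(\sum_j L_{i,j}P_{ij})$. Choosing $\varepsilon_i = \frac{2L_b(i)}{\sigma_i + 2L_b(i)}$ is designed exactly so that the residual coefficient on the virtual term becomes $\alpha(1 - \sigma_i^{-1}(\varepsilon_i^{-1}-1)L_b(i)) = \alpha/2$, while the coefficient on $\|(A_\comm x_\comm)^{(i)}\|^2$ equals $\sigma_i^{-1}(1-\varepsilon_i) = (\sigma_i + 2L_b(i))^{-1} = (\tilde D_M^{-1})_{ii}$. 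Summing over $i$ yields
\begin{equation*}
\|x\|^2_{A^\top\Sigma^\dagger A} \;\geq\; x_\comm^\top A_\comm^\top \tilde D_M^{-1} A_\comm\, x_\comm \;+\; \frac{\alpha}{2}\sum_{i,j}\|P_{ij} x_v^{(i,j)}\|^2.
\end{equation*}

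To finish, I would use the definition $\alpha = 2\lambda_{\min}^+(A_\comm^\top \tilde D_M^{-1} A_\comm)$ together with the variational characterization of the smallest nonzero eigenvalue of a PSD matrix to conclude $x_\comm^\top A_\comm^\top \tilde D_M^{-1} A_\comm\, x_\comm \geq \frac{\alpha}{2}\,\|P_{\Ker(A_\comm)^\perp} x_\comm\|^2$, noting that $\Ker(A_\comm^\top \tilde D_M^{-1} A_\comm) = \Ker(A_\comm)$ since $\tilde D_M$ is positive definite. It then remains to identify $\|x\|^2_{A^\dagger A}$. Since $A^\dagger A$ is the orthogonal projector on $\Ker(A)^\perp$, I would argue that $\Ker(A)$ has the product structure $\Ker(A_\comm) \times \prod_{(i,j)}\Ker(P_{ij})$: indeed $Ax=0$ forces $D_\mu^{\rm diag} x_v = 0$, i.e.\ $P_{ij} x_v^{(i,j)} = 0$ for every virtual edge, which in turn kills the coupling $D_\mu x_v$ in the communication block and leaves $A_\comm x_\comm = 0$. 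Hence $\|x\|^2_{A^\dagger A} = \|P_{\Ker(A_\comm)^\perp} x_\comm\|^2 + \sum_{i,j}\|P_{ij}x_v^{(i,j)}\|^2$, and combining the two displays gives the desired inequality; $\sigma_A \geq \alpha/2$ then follows from the variational characterization applied to $A^\top\Sigma^\dagger A$ relative to $A^\dagger A$.

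The main obstacle I expect is picking the right per-node splitting parameter $\varepsilon_i$: the coefficient $\alpha L_b(i)$ coming from the virtual block can exceed $\sigma_i$, so a crude symmetric $\|a+b\|^2 \geq \tfrac12 \|a\|^2 - \|b\|^2$ style bound leaves a negative residual on the virtual term. The tuned choice $\varepsilon_i = 2L_b(i)/(\sigma_i + 2L_b(i))$ is exactly what turns $\tilde D_M$ into the natural effective regularity and simultaneously preserves half of the virtual mass, so obtaining it essentially dictates the form of $\tilde D_M$. The kernel identification step is comparatively straightforward once the block form of $A$ is used, but it is worth being explicit since the virtual projectors $P_{ij}$ make $A^\dagger A$ genuinely different from the identity on the virtual coordinates.
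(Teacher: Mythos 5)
Your proof is correct and takes a genuinely different route from the one in the paper. The paper's argument passes to $\Sigma^{-\frac12}AA^\top\Sigma^{-\frac12}$, applies the block-determinant (Schur complement) formula to reduce the nonzero spectrum of $A^\top\Sigma^\dagger A$ to roots of $\det(L_\comm\otimes I_d - \Delta_\lambda)=0$, bounds $\Delta_\lambda\preccurlyeq\lambda\,\tilde D_M\otimes I_d$ for $\lambda\le\alpha/2$, and then uses a separate kernel-comparison argument (Lemma~\ref{lemma:ker_u_v}) to exclude nonzero eigenvalues below $\min\big(\lambda_{\min}^+(A_\comm^\top\tilde D_M^{-1}A_\comm),\,\alpha/2\big)$. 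You instead expand $\|x\|^2_{A^\top\Sigma^\dagger A}$ block by block and apply, per communication node, the Young-type split $\|a+b\|^2\ge(1-\varepsilon_i)\|a\|^2+(1-\varepsilon_i^{-1})\|b\|^2$ with the tuned value $\varepsilon_i=2L_b(i)/(\sigma_i+2L_b(i))$, $L_b(i)=\lambda_{\max}\big(\sum_j L_{ij}P_{ij}\big)$, which by design leaves exactly half the virtual mass and turns the communication weight into $(\sigma_i+2L_b(i))^{-1}=(\tilde D_M^{-1})_{ii}$; the product kernel structure $\Ker(A)=\Ker(A_\comm\otimes I_d)\times\prod_{ij}\Ker(P_{ij})$ then identifies $\|x\|^2_{A^\dagger A}$ and closes the argument. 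The algebra checks out: the coefficient on the virtual block is $\alpha\big(1+\sigma_i^{-1}(1-\varepsilon_i^{-1})L_b(i)\big)=\alpha/2$, and the operator-norm bound $\big\|\sum_j\mu_{ij}P_{ij}u_j\big\|^2\le\lambda_{\max}\big(\sum_j\mu_{ij}^2P_{ij}\big)\sum_j\|u_j\|^2$ you invoke is correct since the projectors act on orthogonal coordinate blocks of $x_v$. Your route is more elementary (no Schur complement, no determinant characterization, no auxiliary kernel lemma) and more transparent about why the factor $2$ appears in $\tilde D_M$: it is precisely the price of keeping a positive half of the virtual term in the split. The paper's spectral approach, by contrast, yields a characterization of the full nonzero spectrum of $A^\top\Sigma^\dagger A$ rather than only the lower bound, but that extra generality is unnecessary here. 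Both give $\sigma_A\ge\alpha/2$.
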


\begin{proof}[Proof sketch]
The proof studies the Schur complement of $\Sigma^{-\frac{1}{2}} AA^\top \Sigma^{-\frac{1}{2}}$. This yields a characterization of the eigenvalues of $A^\top \Sigma^{\dagger}A$ in terms of a determinant equation of the form $\det(L_\comm - \Delta_\lambda) = 0$, with $\Delta_\lambda$ a block-diagonal matrix that depends on $\lambda$ and $L_\comm = A_\comm A_\comm^\top$, where $A_\comm \in \R^{nd \times Ed}$ is the restriction of $A$ to communication nodes and edges. Then, Lemma~\ref{lemma:ker_u_v} gives necessary conditions for an $x$ to be in ${\rm Ker}(L_\comm - \Delta_\lambda)$, and we thus deduce bounds on the smallest eigenvalue of $A^\top \Sigma^{\dagger}A$ from upper bounds on $\Delta_\lambda$. Note that the proof is simpler than in the conference paper~\citep{hendrikx2019accelerated}, and the different choices in Assumption~\ref{assumption:parameters_choice} allow for a tighter bound.
\end{proof}

We now study parameter $\rho$ more in details, which is defined in Equation~\eqref{eq:S} by bounding the spectrum of a matrix that depends on the block of coordinates chosen. The spectral properties of this matrix heavily depend on whether the block contains actual communication edges or virtual edges. One can trade $p_{\rm comp}$ for $p_{\rm comm}$ so that the bound is the same for both kind of edges. This amounts to tuning the ratio between communications and computations. We first make some assumptions on the sampling performed, and then detail the communication and computation rate under this sampling. 

\begin{assumption}[Synchronous sampling]
\label{assumption:synchronous_sampling}
The sampling of edges is such that:
\begin{itemize}
    \item With probability $\pcomm$, $b_t = b_{\rm comm}$, the set of all communication edges. This corresponds to communicating over all edges of the network, which comes down to a multiplication by the gossip matrix $L$.
    \item With probability $\pcomp = 1 - \pcomm$, a computation step is performed. In this case, $b_t = \{ (i, j_t(i)), \ i \in \{1, ..., n\}\}$, where $j_t(i) = j$ with probability $p_{i,j}$. This corresponds to each node sampling exactly one virtual edge. 
\end{itemize}
\end{assumption}

This synchronous sampling defines the blocks of coordinates $b_t$ that are picked by Algorithm~\ref{algo:sc_adfs}. It is then possible to compute $\rho$, the rate of convergence of ADFS, depending on the frequency of communication $\pcomm$.

\begin{lemma}
\label{lemma:rate_adfs}
We denote $\kappa_s = \max_i \kappa_i$ and $\gamma$ the spectral gap of the Laplacian of the communication graph $L_\comm = A_\comm A_\comm^\top$. Under the synchronous sampling of Assumption~\ref{assumption:synchronous_sampling}, the convergence rate of ADFS is such that
\begin{align*}
    &\rho^2 = \min\left( \frac{\gamma}{\kappa_\comm}\pcomm^2 , \frac{\pcomp^2}{2(m + \sqrt{m\kappa_s})^2}\right), \hbox{ with }\\
    &\kappa_\comm = \frac{\lambda_{\max}(A_\comm^\top \Sigma_\comm^{-1} A_\comm) \ / \ \lambda_{\max}(A_\comm^\top A_\comm)}{\lambda_{\min}^+(A_\comm^\top \tilde{D}_M^{-1} A_\comm) \ / \ \lambda_{\min}^+(A_\comm^\top A_\comm) }.
\end{align*}
If $\sigma_k = \sigma$ for all $k$ (homogeneous case) then $\kappa_\comm = \max_i (\tilde{D}_M)_{ii} / \sigma$. If $f_{ij}(\theta) = g(X_{ij}^\top \theta)$ and $g$ is $L_g$-smooth then $(\tilde{D}_M)_{ii} = \sigma_i + 2 L_g \lambda_{\max}\left(\sum_{j=1}^m X_{ij} X_{ij}^\top\right)$. Therefore, $\kappa_\comm$ is of order $\kappa_b$ rather than $\kappa_s$, and we recover the expected communication complexity for decentralized algorithms.
In heterogeneous cases, $\kappa_\comm$ better captures the relations between the regularity of the local functions and the topology of the communication graph. 
\end{lemma}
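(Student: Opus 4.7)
The plan is to apply Theorem~\ref{thm:rate_adfs} separately to each of the two block types allowed by Assumption~\ref{assumption:synchronous_sampling}: the theorem guarantees $\rho^2 \leq \sigma_A/S_b^2$ for every block $b$, where $S_b^2 = \lambda_{\max}(A^\dagger A P_b^\dagger A^\top \Sigma^{\dagger} A P_b^\dagger A^\dagger A)$. Lemma~\ref{lemma:main_sigma_A_lb} supplies $\sigma_A \geq \alpha/2 = \lambda_{\min}^+(A_\comm^\top \tilde{D}_M^{-1} A_\comm)$ in both cases, so the remaining work is to upper-bound $S_b^2$ for each block type and assemble the two branches of the stated minimum.

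For the communication block, $P_{b_\comm}^\dagger$ equals $\pcomm^{-1}$ on the communication-edge block and vanishes on virtual edges. The block structure of $A$ then collapses $P_{b_\comm}^\dagger A^\top \Sigma^{\dagger} A P_{b_\comm}^\dagger$ to the single surviving entry $\pcomm^{-2} A_\comm^\top \Sigma_\comm^{-1} A_\comm$, and replacing the supremum over $\Ker(A)^\perp$ by the supremum over the full space only inflates it, so $S_{b_\comm}^2 \leq \pcomm^{-2} \lambda_{\max}(A_\comm^\top \Sigma_\comm^{-1} A_\comm)$. Combining this with the lower bound on $\sigma_A$ and unfolding the definitions of $\gamma$ and $\kappa_\comm$ (both ratios built from $A_\comm^\top A_\comm$, $A_\comm^\top \Sigma_\comm^{-1} A_\comm$, and $A_\comm^\top \tilde D_M^{-1} A_\comm$) produces the first branch $\sigma_A/S_{b_\comm}^2 \geq \pcomm^2 \gamma / \kappa_\comm$.

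The computation block is the heart of the proof. Fixing $b = \{(i, j_i)\}_i$ and $w \in \Ker(A)^\perp$, I expand $v = A P_b^\dagger w$, which is supported only on each central node $i$ and its sampled virtual node $(i, j_i)$, with opposite signs $\pm \mu_{ij_i} p_{ij_i}^{-1} P_{ij_i} w_{ij_i}$. Using $\Sigma^{\dagger} = \diag(\sigma_i^{-1},\, L_{ij}^{-1} P_{ij})$ and $P_{ij}^2 = P_{ij}$, a direct expansion yields
\begin{equation*}
w^\top P_b^\dagger A^\top \Sigma^{\dagger} A P_b^\dagger w = \sum_{i=1}^n \frac{\mu_{ij_i}^2 \left(\sigma_i^{-1} + L_{ij_i}^{-1}\right)}{p_{ij_i}^2} \, \|P_{ij_i} w_{ij_i}\|^2.
\end{equation*}
The parameter choices $\mu_{ij}^2 = \alpha L_{ij}$ and $p_{ij} = \pcomp \sqrt{1 + L_{ij}/\sigma_i}/S_i$ are tuned precisely so that this coefficient simplifies to $\alpha \pcomp^{-2} S_i^2$ independently of $j_i$. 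Concentrating all the mass on the worst $i$ then gives $S_b^2 \leq \alpha \pcomp^{-2} \max_i S_i^2$, and Cauchy--Schwarz on $S_i = \sum_j \sqrt{1 + L_{ij}/\sigma_i}$ together with the definition of $\kappa_s$ yields $S_i \leq \sqrt{m(m + \kappa_s)} \leq m + \sqrt{m \kappa_s}$. Dividing by $\sigma_A \geq \alpha/2$ produces the second branch, with the factor of $2$ inherited from Lemma~\ref{lemma:main_sigma_A_lb}.

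The corollary statements for $\kappa_\comm$ are then routine: in the homogeneous case $\Sigma_\comm = \sigma I$ flattens the numerator of $\kappa_\comm$, while the diagonal structure of $\tilde D_M$ gives the Loewner bound $A_\comm^\top \tilde D_M^{-1} A_\comm \succeq (\max_i (\tilde D_M)_{ii})^{-1} A_\comm^\top A_\comm$, producing $\kappa_\comm \leq \max_i (\tilde D_M)_{ii}/\sigma$. For generalized linear models, $P_{ij} = X_{ij} X_{ij}^\top/\|X_{ij}\|^2$ and $L_{ij} = L_g \|X_{ij}\|^2$ combine to $\sum_j L_{ij} P_{ij} = L_g \sum_j X_{ij} X_{ij}^\top$, which is controlled by $L_b(i)$, yielding $\kappa_\comm = O(\kappa_b)$. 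The main obstacle I anticipate is the computation-block calculation: verifying that the apparently coupled interaction between central and virtual nodes fully decouples under the stated parameter choices, so the projectors $P_{ij}$ align just right and the virtual-node contribution combines with the central-node contribution into the clean factor $(\sigma_i^{-1} + L_{ij_i}^{-1})$ appearing in the displayed quadratic form.
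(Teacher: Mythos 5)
Your proposal is correct and follows essentially the same path as the paper's proof: both branches of $\rho^2$ come from bounding $S_b^2$ for the two block types, the computation-block quadratic form collapses to $\sum_i \mu_{ij_i}^2(\sigma_i^{-1}+L_{ij_i}^{-1})p_{ij_i}^{-2}\|P_{ij_i}w_{ij_i}\|^2$ exactly as in the paper, the parameter choices in Assumption~\ref{assumption:parameters_choice} make this coefficient equal to $\alpha\pcomp^{-2}S_i^2$, and your Cauchy--Schwarz bound on $S_i$ is equivalent to the paper's Jensen step. Your explicit use of $\sigma_A \geq \alpha/2$ (rather than the paper's looser notation $\alpha = 2\sigma_A$) is in fact the cleaner way to land the factor of $2$, but the substance is the same.
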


Now that we have specified the rate of ADFS (improvement per iteration), the only step left is to tune $\pcomm$ to minimize time needed to reach a given precision $\varepsilon$. Theorem~\ref{thm:adfs_speed_precise} gives a choice of $\pcomm$ that achieves optimal rates. 

\begin{theorem}
\label{thm:adfs_speed_precise}
If $p_{\rm comm} = \left(1 + \sqrt{\frac{2\gamma}{\kappa_\comm}}(m + \sqrt{m\kappa_s})\right)^{-1}$, then running Algorithm~\ref{algo:sc_adfs} for $K = \rho^{-1}\log\left(\varepsilon^{-1}\right)$ iterations guarantees $\mathbb{E}\left[\| \theta_K - \theta^\star\|^2 \right] \leq C_0 \varepsilon$, and takes time $T(K)$, with $T(K)$ such that:
\begin{equation*}
    \esp{T(K)} \leq \left(\sqrt{2}(m + \sqrt{m\kappa_s}) + \tau \sqrt{\frac{\kappa_\comm}{\gamma}} \right) \log\left(\frac{1}{\varepsilon}\right).
\end{equation*}
\end{theorem}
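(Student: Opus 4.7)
The plan is to combine the linear convergence rate from Theorem~\ref{thm:rate_adfs} with Lemma~\ref{lemma:rate_adfs}, tune $\pcomm$ to balance the two terms inside the $\min$ that defines $\rho^2$, and then translate iteration count into wall-clock time using the per-iteration cost model. Concretely, I would first invoke Theorem~\ref{thm:rate_adfs} to observe that $\esp{\|\theta_K - \theta^\star\|^2} \leq C_0 (1-\rho)^K$, so choosing $K = \rho^{-1}\log(\varepsilon^{-1})$ and using $\log(1-\rho) \leq -\rho$ guarantees the stated precision. This reduces the problem to estimating $\rho^{-1}(\pcomm \tau + \pcomp)\log(\varepsilon^{-1})$, since each iteration costs $\tau$ when a communication block is drawn (probability $\pcomm$) and cost $1$ when a block of virtual edges is drawn (probability $\pcomp = 1 - \pcomm$).

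The next step is to plug in the prescribed value of $\pcomm$. Setting $A = \sqrt{\kappa_\comm/\gamma}$ and $B = \sqrt{2}(m + \sqrt{m\kappa_s})$, one sees by direct computation that the choice $\pcomm = (1 + B/A)^{-1} = A/(A+B)$ is exactly the one that equalizes the two candidates inside the $\min$ in Lemma~\ref{lemma:rate_adfs}, namely $\sqrt{\gamma/\kappa_\comm}\pcomm = \pcomp/(\sqrt{2}(m + \sqrt{m\kappa_s}))$. At this balanced point, Lemma~\ref{lemma:rate_adfs} gives
\begin{equation*}
    \rho = \pcomm \sqrt{\gamma/\kappa_\comm},
\end{equation*}
and the balancing identity rewrites as $\pcomp = \sqrt{2\gamma/\kappa_\comm}\,(m + \sqrt{m\kappa_s})\,\pcomm$.

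From there, the total time splits cleanly into a pure communication contribution and a pure computation contribution. Using $\rho^{-1} = \pcomm^{-1}\sqrt{\kappa_\comm/\gamma}$, the communication part becomes
\begin{equation*}
    K \pcomm \tau = \rho^{-1}\pcomm \tau \log(\varepsilon^{-1}) = \tau \sqrt{\kappa_\comm/\gamma}\,\log(\varepsilon^{-1}),
\end{equation*}
while the computation part becomes
\begin{equation*}
    K \pcomp = \rho^{-1}\pcomp\log(\varepsilon^{-1}) = \pcomm^{-1}\pcomp\sqrt{\kappa_\comm/\gamma}\log(\varepsilon^{-1}) = \sqrt{2}\,(m+\sqrt{m\kappa_s})\log(\varepsilon^{-1}),
\end{equation*}
after substituting the balancing identity for $\pcomp/\pcomm$. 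Summing gives the announced bound.

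I expect no serious obstacle: the only slightly delicate point is bookkeeping the two regimes of the $\min$ in $\rho^2$ and verifying that the prescribed $\pcomm$ indeed sits at the balance point (so that the $\min$ is attained by either term and one can pick the communication one for the algebra). Everything else is substitution and Lemma~\ref{lemma:rate_adfs}. The statement takes expected time because the per-iteration cost is a random variable (either $\tau$ or $1$) whose expectation $\pcomm\tau + \pcomp$ enters linearly; this uses Wald-type linearity since the per-iteration cost is independent of $K$, which is deterministic here.
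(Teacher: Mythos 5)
Your proof is correct and takes essentially the same approach as the paper: both identify the prescribed $\pcomm$ as the balance point of the two terms inside the $\min$ defining $\rho^2$ in Lemma~\ref{lemma:rate_adfs}, so that $\rho_{\rm comm}=\rho_{\rm comp}$, and then substitute to split $K(\pcomp+\tau\pcomm)$ into the computation and communication contributions. The only cosmetic difference is that the paper derives the optimal $\pcomm$ by arguing about monotonicity of the two candidate times $T_1,T_2$ and then solving for the crossing point, whereas you verify directly that the stated $\pcomm$ sits at the balance; your remark about ``Wald-type linearity'' is unneeded since $K$ is deterministic and $\esp{T(K)}=(\pcomp+\tau\pcomm)K$ follows immediately.
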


\begin{remark}[Tightness of the bound]
For generalized linear models with homogeneous regulatization, we already saw that $\kappa_\comm = O(\kappa_b)$, and thus ADFS is optimal. Yet, the function used to derive the lower bound does not have $\sigma_k = \sigma_\ell$ for all $k, \ell$, so we cannot directly say that $\kappa_\comm = \kappa_b$ in this case. Fortunately, it is possible to exploit the structure of the graph and of $\tilde{D}_M$ and $\Sigma^{\dagger}_\comm$ to derive that $\kappa_\comm = O(\kappa_b)$ anyway. Detailed derivations are presented in Appendix~\ref{app:adfs_line_graph}.
\end{remark}

\subsection{Non-smooth setting}
\label{sec:non-smooth}
The leading constant governing the convergence rate of ADFS in the non-smooth case is $\lambda_{\min}^+\left(A^\top A\right) / S^2$, which is very related to the constant for the smooth case. Indeed, $\lambda_{\min}^+\left(A^\top\Sigma^{\dagger}A\right)$ is simply replaced by $\lambda_{\min}^+\left(A^\top A\right)$. In particular, we can use the results of Lemma~\ref{lemma:main_sigma_A_lb} and simply replace $\Sigma^{\dagger}$ by the identity matrix. In this case, we get $\mu_{ij}^2 = \frac{\lambda_{\min}^+(L)}{1 + m}$ when $(i,j)$ is a computation edge, which yields $$\lambda_{\min}^+(A^\top A) \geq \frac{\lambda_{\min}^+(L)}{2(m + 1)}.$$
Similarly, it is possible to set $p_{i,j} = \pcomp / m$ and get a non-smooth equivalent of Lemma~\ref{lemma:rate_adfs} by writing:
$$S^2 \leq \frac{1}{\sigma_{\min}}\max\left(\frac{\lambda_{\max}(L)}{\pcomm^2}, \frac{\lambda_{\min}^+(L) m^2}{(m + 1)\pcomp^2}\right),$$
and so 
$$ \frac{S^2}{\lambda_{\min}^+(A^\top A)} \leq \frac{2(m + 1)}{\sigma_{\min}}\max\left(\frac{1}{ \gamma \pcomm^2}, \frac{m}{\pcomp^2}\right). $$
Yet, there is no linear convergence and the precise optimization of $\pcomm$ depends on the leading term from the bound of Theorem~\ref{thm:adfs_non_smooth}. If the term proportional to $r_t^2$ dominates then the same arguments as those of Theorem~\ref{thm:adfs_speed_precise} can be applied and the optimal choice is $p_{\rm comm} = \left(1 + \sqrt{\gamma m}\right)^{-1}$.

\section{Experiments}
\label{sec:experiments}
\begin{figure*}
\centering
\begin{subfigure}{.32\textwidth}
  \centering
  \includegraphics[width=1.0\linewidth]{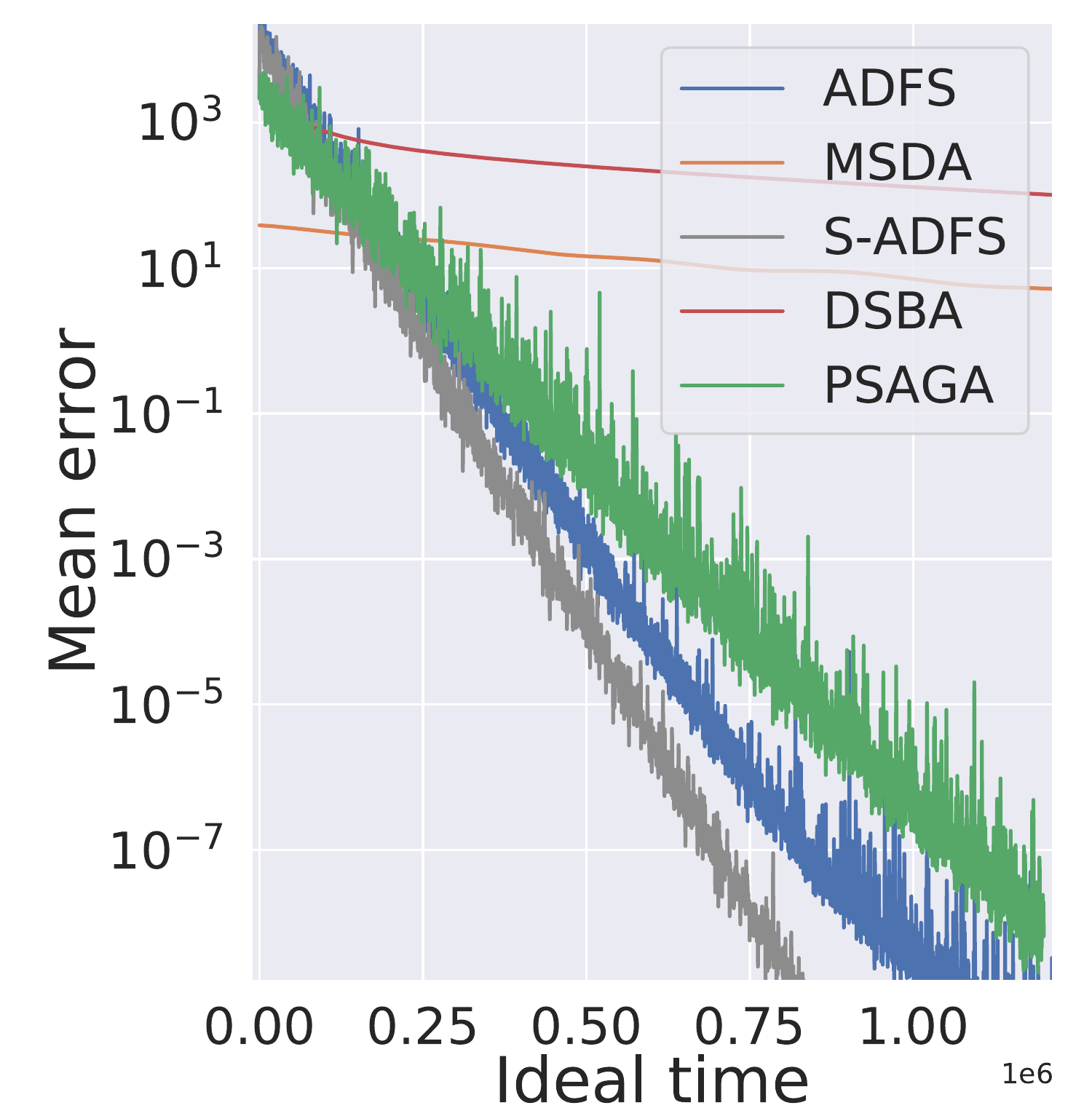}
  \vspace{-15pt}
\caption{Higgs, $n=4$}
\label{fig:2x2_tau1_m1000}
\end{subfigure}%
\begin{subfigure}{.32\textwidth}
  \centering
  \includegraphics[width=1.0\linewidth]{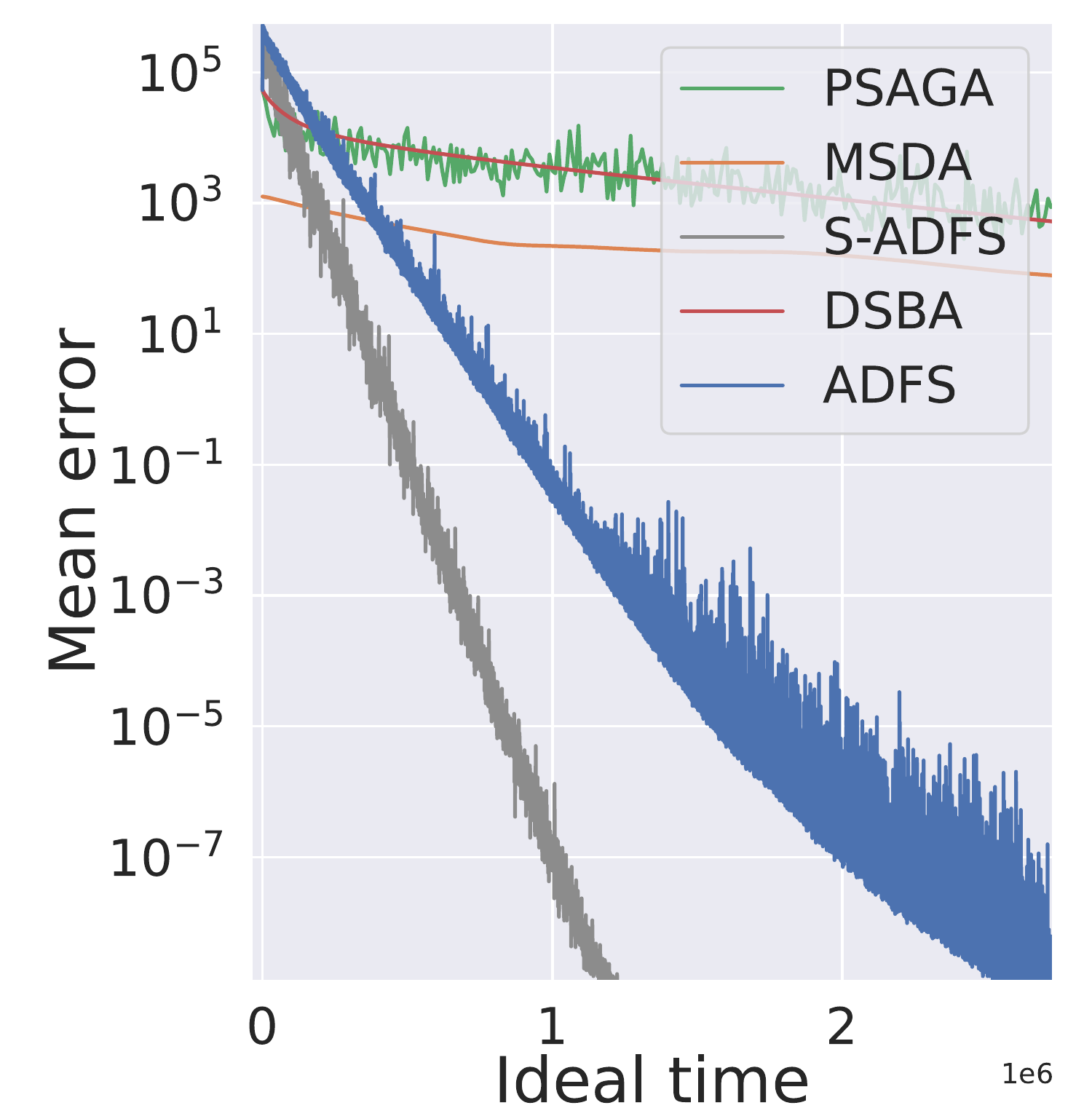}
  \vspace{-15pt}
  \caption{Higgs, $n=100$}
\label{fig:10x10_m300}
\end{subfigure}
\begin{subfigure}{.32\textwidth}
  \centering
  \includegraphics[width=1.0\linewidth]{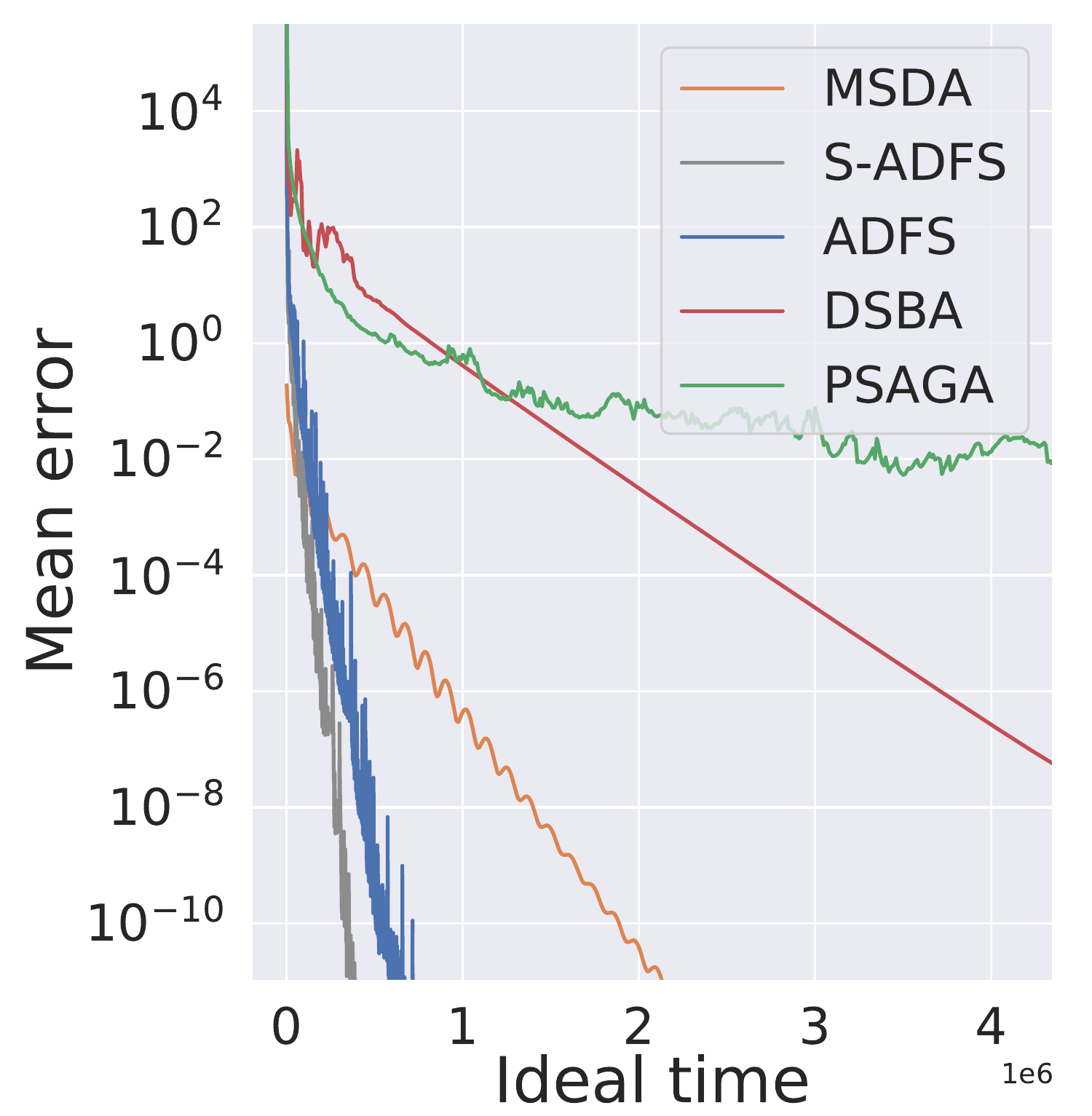}
  \vspace{-15pt}
  \caption{Covtype, $n=100$}
\label{fig:7x7_tau5_m10000}
\end{subfigure}
\vspace{5pt}
\caption{Performances of various decentralized algorithms on the logistic regression task with $m = 10^4$ points per node, regularization parameter $\sigma=1$ and communication delays $\tau=5$ on 2D grid networks of different sizes.}
\label{fig:test}
\end{figure*}
In this section, we illustrate the theoretical results by showing how ADFS compares with MSDA~\citep{scaman2017optimal}, Point-SAGA~\citep{defazio2016simple}, and DSBA~\citep{shen2018towards}. We also compare the synchronous version of ADFS (S-ADFS) to the locally synchronous one of the conference paper (ADFS)~\cite{hendrikx2019accelerated}. All algorithms (except for DSBA, for which we fine-tuned the step-size) were run with out-of-the-box hyperparameters given by theory on data extracted from the standard Higgs and Covtype datasets from LibSVM. The underlying graph is a 2D grid network. Experiments were run in a distributed manner on an actual computing cluster. Yet, plots are shown for \emph{idealized times} in order to abstract implementation details as well as ensure that reported timings were not impacted by the cluster status. All the details of the experimental setup can be found in Appendix~\ref{app:experimental_setting}. An implementation of S-ADFS is also available in supplementary material.

First of all, we note on all the plots from Figure~\ref{fig:test} that ADFS and S-ADFS exhibit very similar performances. S-ADFS is always slightly faster because it suffers from no waiting time but, as argued in the conference paper~\cite{hendrikx2019accelerated}, the waiting time due to the local synchrony of ADFS is rather small. Although S-ADFS is easier to implement (series of synchronous rounds), it offers less flexibility than ADFS to deal with identified stragglers. In the next paragraph, we refer to both S-ADFS and ADFS as ADFS since the differences are rather small. 

Figure~\ref{fig:2x2_tau1_m1000} shows that, as predicted by theory, ADFS and Point-SAGA have similar rates on small networks. In this case, ADFS uses more computing power but has a small overhead. Figures~\ref{fig:10x10_m300} and~\ref{fig:7x7_tau5_m10000} use a much larger grid to evaluate how these algorithms scale. In this setting, Point-SAGA is the slowest algorithm since it has 100 times less computing power available. MSDA performs quite well on the Covtype dataset thanks to its very good network scaling. Yet, the $m\sqrt{\kappa}$ factor in its rate makes it scales poorly with the condition number $\kappa$, which explains why it struggles on the Higgs dataset. DSBA is slow as well despite the fine-tuning because it is the only non-accelerated method, and it has to communicate after each proximal step, thus having to wait for a time $\tau=5$ at each step. ADFS does not suffer from any of these drawbacks and therefore outperforms other approaches by a large margin on these experiments. This illustrates the fact that ADFS combines the strengths of accelerated stochastic algorithms, such as Point-SAGA, and fast decentralized algorithms, such as MSDA. 

\section{Conclusion}
In this paper, we develop an algorithmic framework for accelerated decentralized stochastic optimization based on accelerated block coordinate descent with arbitrary sampling. It is an extension of the conference paper~\citep{hendrikx2019accelerated} that provides stronger convergence results, and allows more flexibility in the algorithm design. This flexibility is obtained thanks to the arbitrary block sampling, so it is possible to transparently use global synchronous communications as well as local pairwise communications, or anything in between. The rate of ADFS explicitly mixes optimization-related and graph-related quantities, so that is is possible to adapt the parameters of the algorithm to heterogeneous problems with specific structure, as done with the line graph for instance.  

We also provide a lower bound for decentralized stochastic optimization, and show that a synchronous implementation of ADFS almost matches this lower bound when parameters are chosen in a suitable way. The bound is exactly matched for generalized linear models, and otherwise a small gap due to the difference between the stochastic and batch condition numbers may exist. The problem of closing this gap in full generality remains open. 

\section*{Acknowledgement}
We acknowledge support from the European Research Council (grant SEQUOIA 724063). 

\begin{table}
    \centering
    \begin{tabular}{c|c}
    \hline
         $\tau$ & Communication time \\
         \hline
         $n$ & Number of computing nodes \\
         \hline
         $m$ & Number of local functions per node \\
         \hline
         $\sigma_i$ & Strong convexity of function $f_i$\\
         \hline
         $L_{i,j}$ & Smoothness of function $f_{i,j}$\\
         \hline
         $\kappa_i$ & $1 + \sigma_i^{-1} \sum_{j=1}^m L_{i,j}$ \\
         \hline
         $\kappa_s$ & $\max_i \kappa_i$ \\
         \hline
         $M_i$ & smoothness of Function $f_i$\\
         \hline
         $\kappa_b$ & $M_i / \sigma_i$ \\
         \hline
         $L$ & Laplacian of the communication network $f_i$\\
         \hline
         $\lambda_{\min}^+(L)$ & Smallest non-zero eigenvalue of $L$\\
         \hline
         $\gamma$ & $\lambda_{\min}^+(L) / \lambda_{\max}(L)$\\
         \hline
         $R_{k,\ell}$ & $e_{k,\ell}^\top A^\dagger A e_{k,\ell}$\\
         \hline
    \end{tabular}
    \caption{Notation table}
    \label{tab:notation_recap}
\end{table}

\bibliographystyle{siamplain}
\bibliography{journal_biblio}

\newpage

\appendix 

Section~\ref{app:lower_bound} presents the proofs for the lower bounds. Then, Section~\ref{app:apcg} presents APCG with arbitrary block sampling as well as efficient formulations. Section~\ref{app:algo_perfs} presents the derivations required to obtain ADFS from the extended APCG algorithm as well as the analysis of the speed of ADFS for a specific choice of parameters. Section~\ref{app:adfs_efficient} presents formulations of ADFS that can be implemented efficiently, and Section~\ref{app:experimental_setting} details the experimental setting.

\section{Lower bounds proofs}
\label{app:lower_bound}
The goal of this Section is to prove the various lower bounds presented in Section~\ref{sec:lower_bound}. Proofs are based on the work of~\citet{scaman2017optimal}, with separable local functions as in~\citet{lan2017optimal} to take into account the finite sum aspect. For simplicity, the proofs are presented for $x\in \ell_2$, the space of sequences with summable squares, but they can be adapted to $x \in \R^d$ as done in~\citet{lan2017optimal}. We prove in this section Theorem~\ref{thm:lb_general}.

\begin{proof}
We consider $Q$ a set of nodes and $Q_\Delta^c$ the set of nodes at distance at least $\Delta$ from $Q$ in the graph $G$. Let $L, \sigma > 0$ be such that $L \geq \sigma$ and $\kappa_\ell > 3L / \sigma$. Then, we define for $y \in \ell_2$ functions $\psi_i^Q$ such that:
\begin{equation}
    f_i^Q(y) = \frac{1}{2|Q|} \left[\frac{\sigma}{3}\|y\|^2 + \frac{L - \sigma}{4}(y^\top  M_1 y - e_1^\top  y)\right] \hbox{ if } i \in Q,
\end{equation}
\begin{equation}
    f_i^Q(y) = \frac{1}{2|Q_\Delta^c|} \left[\frac{\sigma}{3}\|y\|^2 + \frac{L - \sigma}{4}y^\top  M_2 y\right] \hbox{ if } i \in Q_d^c,
\end{equation}
\begin{equation}
    f_i^Q(y) = \frac{\sigma}{6(n - |Q_\Delta^c| - |Q|)} \|y\|^2 \hbox{ otherwise,}
\end{equation}
where $M_1$ is the infinite block diagonal matrices with $\begin{pmatrix} 1 & - 1 \\ - 1 & 1 \end{pmatrix}$ on the diagonal and $M_2 = \begin{pmatrix} 1 & 0 \\ 0 & M_1 \end{pmatrix}$. We then define for all $i \in \{1, \cdots, n\}$:
$$\psi_i: x \in \ell_2^m \mapsto \sum_{j=1}^m f_i^Q(x_j) \hbox{, } f = \sum_{i=1}^n f_i^Q, \hbox{ and } \psi = \sum_{i=1}^n \psi_i.$$
Note that we have that for all $i,j$, $\kappa_\ell \geq 3L / \sigma$ since $0 \preceq M_1 + M_2 \preceq 4 I$. Besides, the solution of $\min_{y \in \ell_2} f(y)$ is $y^*$ such that for $k \geq 1$, the $k$-th coordinate of $y^*$ is $y^*(k) = q^k$ where $q = \frac{\sqrt{L / \sigma} - 1}{\sqrt{L / \sigma} + 1}$. Indeed, $y^*$ is such that for all $k \geq 1$,
$$\sigma y^*(k) + \frac{L - \sigma}{4}\left[2y^*(k) - y^*(k-1) - y^*(k+1)\right] = 0,$$
where $y^*(0) = 1$ by convention. We now consider a sequence $x^t \in \ell_2^{n \times m}$ generated by a black-box optimization procedure as defined in Section~\ref{sec:black_box_model} and such that $x^0 = 0$ without loss of generality (it comes down to optimizing a shifted version of $\psi$). Therefore, $x^t_{i,j} \in \ell_2$ corresponds to the $j$-th entry of the local parameter of node $i$, and we define similarly $x^* \in \ell_2^{n \times m}$ such that $x^*_{i,j} = y^*$. We then write: 
$$\sum_{i=1}^n \sum_{j=1}^m \|x_{i,j}^t - x^*_{i,j}\|^2 \geq \sum_{i=1}^n \sum_{j=1}^m \sum_{l \geq k_{i,j}(t)}\|x^*_{i,j}(l)\|^2 \geq \sum_{i=1}^n \sum_{j=1}^m \frac{1}{1 - q^2} q^{2k_{j}(t)},$$
where $k_{j}(t)$ is the first index such that $x_{i,j}^t(l) = 0$ for all $i$ and $l \geq k_j(t)$. Using the fact that $\|x_{i,j}^0 - x^*_{i,j}\|^2 = (1 - q)^{-1}$ for all $i$, we write:
$$\esp{\sum_{i=1}^n\sum_{j=1}^m \frac{\|x_{i,j}^t - x^*_{i,j}\|^2}{\|x_{i,j}^0 - x^*_{i,j}\|^2}} \geq \frac{1 - q}{1 - q^2} n \sum_{j=1}^m \esp{q^{2k_{j}(t)}}.$$
An upper bound on $k_j(t)$ thus gives a lower bound on the expected error. We now consider the following two extreme cases: 

\textbf{Case 1 : Communication bottleneck.} The first one consists in considering that computations are instant nodes in $Q$ and $Q_\Delta^c$ perform a gradient update using function $i$ as soon as they receive the value. This is an extremely favorable case in which communication is the only bottleneck. Let us denote ${\rm pos}_0$ the operator that gives the position of the last non-zero coordinate of a sequence in $\ell_2$. In particular, $\max_i \poso(x^t_{i,j}) = k_j(t)$. At the beginning, $x_0 = 0$ so $k_j(0) = 1$. Due to the structure of $f$, if $\poso(x)$ is odd then $\poso(\nabla f_i^Q(x)) = \poso(x) + \mathds{1}\{i \in Q\}$. If $\poso(x)$ is even then $\poso(\nabla f_i^Q(x)) = \poso(x) + \mathds{1}\{i \in Q_\Delta^c\}$. Similarly, if we denote $p_x = \prox_{\eta f_i^Q}(x)$ then $p_x + \eta \nabla f_i^Q(p_x) = x$ so the same reasoning can be applied since $p_x$ is unique and if $\poso(x)$ is odd, $p_x(k) = 0$ is a solution for $k > \poso(x) + \mathds{1}\{i \in Q\}$.

Therefore, nodes in $Q$ can only increase $k_j(t)$ if it is odd, and node in $Q^c_\Delta$ can only increase $k_j(t)$ if it is even. Considering that a message takes time at least $\Delta \tau$ (with $\Delta$ the diameter of the network) to go from $Q$ to $Q_\Delta^c$ we get:
\begin{equation} \label{eq:kt_comm}
k_{j}(t) \leq 1 + \frac{t}{\Delta \tau}, \hbox{ and so }  \esp{q^{2k_{j}(t)}} \geq q^{2 + \frac{2t}{\Delta \tau}}.
\end{equation}
where the $2$ term is here to account for the fact that no communication is needed for the first step. This bound corresponds to the case in~\cite{scaman2017optimal} in which computation of the useful gradient starts as soon as possible.\\

\textbf{Case 2 : Computations bottleneck.} The other bound is obtained by considering the other extreme case, in which communications are instantaneous. At a given time $t$, due to the form of the local functions, the only nodes that can improve the error for a given dimension are either the ones in $Q$ or the ones in $Q_c^\Delta$. Consider it is a node in $Q$, then the message needs to be sent to a node in $Q_c^\Delta$ and from there, only nodes in $Q_c^\Delta$ will be able to increase $k_{j}(t)$. Therefore, if we neglect both the communication time and the time it takes for nodes in $Q_c^\Delta$ to increase $k_j(t)$, we obtain that $k_j(t)$ is bounded by two times the number of time progress has been made on coordinate $j$. Since node $i$ can only compute first-order characteristics for the function $\psi_{i,\zeta_i(t)} :x \mapsto f_i^Q(x_{\zeta_i(t)})$, this leads to:
$$k_j(t) \leq 2 \sum_{l=1}^t \sum_{i \in Q} \mathds{1}\{\zeta_i(t) = j\},$$
since each evaluation takes time $1$ and only nodes in $Q$ can increase even dimensions. In particular, we have that
$$\sum_{j=1}^m k_j(t) \leq 2 \sum_{l=1}^t \sum_{i \in Q} \sum_{j=1}^m \mathds{1}\{\zeta_i(t) = j\} \leq 2 |Q| \lceil t \rceil.$$
We can then use Jensen inequality with the convex function $f: x \mapsto q^{2x}$ and the fact that $q \leq 1$ to write that:
\begin{equation} \label{eq:kt_comp}
  \frac{1}{m}\sum_{i=j}^m q^{2k_j(t)} \geq q^{\frac{2}{m}\sum_{j=1}^m k_j(t)} \geq q^{\frac{4 |Q| \lceil t \rceil}{m}}.
\end{equation}
Therefore, parallelism is very limited in this case because the bound only nodes in $Q$ actually contribute to the progress. This is actually the case as well in~\citet{scaman2017optimal}.
In the end, we can lower bound the error by the max of Equations~\eqref{eq:kt_comm} and~\ref{eq:kt_comp}. In order to have a simpler expression, we lower bound the maximum of the two terms by their average to obtain:
$$ \frac{2}{nm} \frac{1 - q^2}{1 - q} \esp{\sum_{i=1}^n \sum_{j=1}^m\frac{\|x_{i,j}^t - x^*_{i,j}\|^2}{\|x_{i,j}^0 - x^*_{i,j}\|^2}} \geq q^{\frac{4 |Q| \lceil t \rceil}{m}} + q^{2 + \frac{2\lceil t\rceil}{\Delta\tau}}.$$
Since $\|x^t - x^*\|^2 = \sum_{i=1}^n \sum{j=1}^m\|x_{i,j}^t - x_{i,j}^*\|^2$ and for all $i,i^\prime, j, j^\prime,$ we have $x_{i,j}^0 = x_{i^\prime, j^\prime}$ and $x^*_{i,j} = x_{i^\prime, j^\prime}^*$ then $\|x^0 - x^*\|^2 = nm \|x_{i,j}^0 - x^*\|^2$ for some $i,j$. In particular, 
$$ 2 \frac{1 - q^2}{1 - q} \esp{\frac{\|x^t - x^*\|^2}{\|x^0 - x^*\|^2}} \geq q^{\frac{4 |Q| \lceil t \rceil}{m}} + q^{2 + \frac{2\lceil t\rceil}{\Delta\tau}}.$$
Then, we write $q = \frac{\sqrt{L / \sigma} - 1}{\sqrt{L / \sigma} + 1} = 1 - \frac{2}{\sqrt{L / \sigma} + 1}$ and $1 + \sqrt{L / \sigma} = (m + m\sqrt{L / \sigma}) / m = (m + \sqrt{m \kappa_s / 3}) / m$ since the objective is separable in $j$. This yields:
\begin{align*}
  2 \frac{1 - q^2}{1 - q} \esp{\frac{\|x^t - x^*\|^2}{\|x^0 - x^*\|^2}} \geq 
  \left(1 - \frac{2m}{m + \sqrt{m\kappa_s / 3})}\right)^{\frac{4|Q|\lceil t \rceil}{m}} + \left(1 - \frac{2}{1 + \sqrt{\kappa_\ell / 3}}\right)^{2 + \frac{2\lceil t\rceil}{\Delta\tau}}.
\end{align*}
It is then possible to pick $\Delta$ as the diameter of the graph and $Q = \{u\}$ where $u \in \arg\max_{v} d(u,v)$ where $d(u,v)$ is the distance between nodes $u$ and $v$ in the graph $\mathcal{G}$.
Note that the way of choosing $\zeta_i(t)$ (\emph{e.g.}, deterministically or randomly) does not matter. 
\end{proof}

\section{Generalized APCG}
\label{app:apcg}
\subsection{Efficient implementation}
\label{app:efficient_apcg}
This section presents efficient implementations of the generalized APCG algorithm. The main goal is to avoid as much as possible to perform convex combinations of dense vectors. The main changes in Algorithm~\ref{algo:generalized_apcg_eff_sc} are that we express the proximal operator of line 5 in a slightly different but equivalent form and that line $6$ requires a matrix product to take into account the block aspect and the strong convexity in an arbitrary norm. The proof is a straightforward adaptation of~\citet{lin2015accelerated}. Note that the full vector $w_t$ never actually needs to be formed so local updates are sparse.

\begin{algorithm}
\caption{Efficient Generalized APCG$(\rho, \sigma_A, p_i)$, Strongly Convex Case.}
\label{algo:generalized_apcg_eff_sc}
\begin{algorithmic}[1]
\STATE $u_0 = 0$, $z_0 = 0$, $\phi = \frac{1 - \rho}{1 + \rho}$, $\eta = \frac{\rho}{\sigma_A}$
\WHILE{$t < T$}
\STATE $w_t = - \phi^{t+1}u_t + v_t$
\STATE $g_t = \eta P_b^\dagger \nabla q_A(\phi^{t+1}u_t + z_t)$
\STATE $h_t^{(i)} = {\rm prox}_{\eta p_i^{-1} \psi_i}\left(w_{t}^{(i)} - g_t^{(i)}\right) - w_t^{(i)}$ for all $i \in b$, $0$ otherwise.
\STATE $u_{t+1} = u_t - \frac{I - \rho P_{b_t}^\dagger A^\dagger A}{2\phi^{t+1}}h_t, \ \ \ z_{t+1} = z_t + \frac{I + \rho P_{b_t}^\dagger A^\dagger A}{2}h_t$
\ENDWHILE
\RETURN $\phi^{T}u_T + z_T$
\end{algorithmic}
\end{algorithm}

We now present the convex case, which is an adaptation of~\citet{fercoq2015accelerated}. We therefore refer the interested reader to this paper for the details of the equivalence between Algorithm~\ref{algo:generalized_apcg} in the convex case and Algorithm~\ref{algo:generalized_apcg_eff_c}.

\begin{algorithm}
\caption{Efficient Generalized APCG$(\rho, \sigma_A, p_i)$, Convex Case.}
\label{algo:generalized_apcg_eff_c}
\begin{algorithmic}[1]
\STATE $u_0 = 0$, $v_0 = 0$, $\alpha_0 = \min_{i, \psi_i \neq 0} p_i$, $\eta_t = \frac{1}{\alpha_t S^2}$
\WHILE{$t < T$}
\STATE $g_t = \eta_t P_b^\dagger \nabla q_A(\alpha_t^2 u_t + z_t)$
\STATE $z_{t+1}^{(i)} = {\rm prox}_{\eta_t p_i^{-1} \psi_i}\left(z_t^{(i)} - g_t^{(i)}\right)$ for all $i \in b$, $0$ otherwise.
\STATE $u_{t+1} = u_t - \frac{I - \alpha_t P_{b_t}^\dagger A^\dagger A}{2\alpha_t^2}(z_{t+1} - z_t)$
\STATE $\alpha_{t+1} = \frac{\sqrt{\alpha_t^4 + 4\alpha_t^2} - \alpha_t^2}{2} = \frac{2}{1 + \sqrt{1 + 4 \alpha_t^{-2}}}$
\ENDWHILE
\RETURN $\alpha_{T - 1}^2 u_{T} + v_{T}$
\end{algorithmic}
\end{algorithm}

\subsection{Proof of Theorem~\ref{thm:gen_apcg}}
Before starting the proof, we define $w_t = (1 - \beta_t) v_t + \beta_t y_t$, and for $v \in \R$:
$$V_i^t(v) = \frac{B_{t+1}p_i}{2 a_{t+1}} \|v - w_t^{(i)} + \eta_i e_i^\top  \nabla f(y_t)\|^2 + \psi_i(v).$$ Then, we give the following lemma, which generalizes the proofs in~\citet{lin2015accelerated} and~\citet{fercoq2015accelerated} by considering non-uniform probabilities and that works with blocks of coordinates for both the convex and the strongly convex cases. The proof is given later.

\begin{lemma}
\label{lemma:lyapunov_psi}
If either $1 - \beta_t - \frac{\alpha_t}{p_i} \geq 0$ or $\alpha_t = \beta_t$ and $1 - \frac{\alpha_t}{p_i} \geq 0$ for any $i$ such that $\psi_i \neq 0$, then for any $t$ and $i$ such that $\psi_i \neq 0$, we can write $x_t^{(i)} = \sum_{l=0}^t \delta^{(i)}_t(l) v_l^{(i)}$ such that $\sum_{l=0}^t \delta^{(i)}_t(l) = 1$ and for any $l$, $\delta^{(i)}_t(l) \geq 0$. We define $\hat{\psi}_t^{(i)} = \sum_{l=0}^t \delta^{(i)}_t(l) \psi_i(v_l^{(i)})$ and $\hat{\psi}_t = \sum_{i=1}^d \hat{\psi}_t^{(i)}$. Then, if $R_i = 1$ whenever $\psi_i \neq 0$, $\psi(x_t) \leq \hat{\psi}_t$ and:
\begin{equation}
    \mathbb{E}_{i_t}\left[\hat{\psi}_{t+1}\right] \leq \alpha_t \psi(\tilde{v}_{t+1}) + (1 - \alpha_t)\hat{\psi}_t.
\end{equation}
where $\tilde{v}_{t+1}^{(i)} = \arg \min_v V_i^t(v)$ for all $i$. In particular, $v_{t+1}^{(i)} = \tilde{v}_{t+1}^{(i)}$ if $i \in b_t$ and $v_{t+1}^{(i)} = w_t^{(i)}$ if $i \notin b_t$.
\end{lemma}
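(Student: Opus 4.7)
The proof proceeds by induction on $t$, with two intertwined goals at each step: first, to construct the convex combination $x_t^{(i)} = \sum_{l=0}^t \delta^{(i)}_t(l)\,v_l^{(i)}$, and second, to derive the expectation bound on $\hat{\psi}_{t+1}$. The starting point is the observation that whenever $\psi_i \neq 0$ we have $R_i = 1$, so the symmetric projector $A^\dagger A$ has $e_i$ as its $i$-th column, which makes the update of Algorithm~\ref{algo:generalized_apcg} collapse at such coordinates to $x_{t+1}^{(i)} = y_t^{(i)} + \frac{\alpha_t}{p_i}\mathds{1}\{i \in b_t\}(v_{t+1}^{(i)} - w_t^{(i)})$. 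Since non-selected coordinates are unchanged by both the gradient and the proximal step, $v_{t+1}^{(i)} = w_t^{(i)}$ whenever $i \notin b_t$, so this can be rewritten uniformly as
\[
  x_{t+1}^{(i)} = y_t^{(i)} + \tfrac{\alpha_t}{p_i}(v_{t+1}^{(i)} - w_t^{(i)})
\]
for every coordinate with $R_i = 1$, whether or not it was sampled.

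Next I would substitute $y_t = \frac{(1-\alpha_t)x_t + \alpha_t(1-\beta_t)v_t}{1-\alpha_t\beta_t}$ and $w_t = (1-\beta_t)v_t + \beta_t y_t$ and plug in the inductive decomposition of $x_t^{(i)}$ to read off an explicit recursion for $\delta^{(i)}_{t+1}(l)$; a direct bookkeeping check shows the coefficients sum to one. Non-negativity is immediate for every index except $l = t$ in the case $i \in b_t$, where a negative contribution of order $\alpha_t(1-\beta_t)(p_i-1)/p_i$ appears; absorbing it requires the lower bound $\delta^{(i)}_t(t) \geq \alpha_t(1-\beta_t)(1-p_i) / [(1-\alpha_t)(p_i - \alpha_t\beta_t)]$. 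This is where the hypothesis of the lemma enters: under $1-\beta_t - \alpha_t/p_i \geq 0$ (or, in the strongly convex case, $\alpha_t = \beta_t$ with $\alpha_t \leq p_i$), a nested induction shows that the stronger invariant $\delta^{(i)}_t(t) \geq \alpha_{t-1}/p_i$ is preserved from step to step, which is exactly enough to kill the negative term. Identifying this nested invariant and verifying its preservation through both the ``$i \in b_{t-1}$'' and ``$i \notin b_{t-1}$'' branches of the recursion is, I expect, the main technical obstacle in the proof.

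Once the convex combination is established, Jensen's inequality applied coordinatewise to each convex $\psi_i$ yields $\psi_i(x_t^{(i)}) \leq \sum_l \delta^{(i)}_t(l)\psi_i(v_l^{(i)}) = \hat{\psi}_t^{(i)}$, and summation over $i$ gives $\psi(x_t) \leq \hat{\psi}_t$. For the expectation identity, I would take $\mathbb{E}_{b_t}[\,\cdot\,]$ of the recursion for $\delta^{(i)}_{t+1}(l)$. For $l \leq t$ the two cases $i \in b_t$ (weight $p_i$) and $i \notin b_t$ (weight $1-p_i$) recombine; after a short algebraic cancellation involving the factor $1 - \alpha_t\beta_t$, the result collapses to $\mathbb{E}[\delta^{(i)}_{t+1}(l)] = (1-\alpha_t)\delta^{(i)}_t(l)$. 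For $l = t+1$ the coefficient $\frac{\alpha_t}{p_i}\mathds{1}\{i \in b_t\}$ multiplies $\psi_i(v_{t+1}^{(i)}) = \psi_i(\tilde{v}_{t+1}^{(i)})$ on the event $\{i \in b_t\}$, so its expectation contributes exactly $\alpha_t \psi_i(\tilde{v}_{t+1}^{(i)})$. Summing over $i$ gives $\mathbb{E}_{b_t}[\hat{\psi}_{t+1}] = \alpha_t \psi(\tilde{v}_{t+1}) + (1-\alpha_t)\hat{\psi}_t$, which yields the announced inequality.
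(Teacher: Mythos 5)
Your opening move is the right one and matches the paper exactly: exploiting $R_i = 1$ to collapse the $x$-update to $x_{t+1}^{(i)} = y_t^{(i)} + \frac{\alpha_t}{p_i}(v_{t+1}^{(i)} - w_t^{(i)})$, then observing this holds uniformly because $v_{t+1}^{(i)} = w_t^{(i)}$ when $i \notin b_t$. From there your argument splits into two halves that rely on mutually incompatible conventions for the coefficients $\delta^{(i)}_t(l)$, and that inconsistency is a genuine gap.

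For the nonnegativity step you effectively adopt the \emph{deterministic} decomposition: the uniform identity makes the coefficient on $v_{t+1}^{(i)}$ equal to $\alpha_t/p_i$ in every realization (not only on $\{i \in b_t\}$), so $\delta_t^{(i)}(t) = \alpha_{t-1}/p_i$ holds with equality by construction and there is no separate ``nested invariant'' to propagate through the two branches --- this is exactly what the paper records. But for the expectation step you switch to a \emph{random} decomposition, with the coefficient on $v_{t+1}^{(i)}$ being $\frac{\alpha_t}{p_i}\mathds{1}\{i \in b_t\}$ and a case split on $i \in b_t$ versus $i \notin b_t$ recombining to give $\mathbb{E}_{b_t}[\delta_{t+1}^{(i)}(l)] = (1-\alpha_t)\delta_t^{(i)}(l)$. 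That identity does hold in the random convention, but then your nonnegativity argument collapses: if $i \notin b_{t-1}$ the random-convention coefficient $\delta_t^{(i)}(t)$ is $0$, and if subsequently $i \in b_t$ the coefficient $\delta_{t+1}^{(i)}(t)$ becomes $\frac{\alpha_t(1-\beta_t)}{1-\alpha_t\beta_t}(1-\tfrac{1}{p_i}) < 0$, no matter what hypothesis you impose. Conversely, in the deterministic convention where nonnegativity does go through, the recursion gives $\delta_{t+1}^{(i)}(l) = (1-\tfrac{\alpha_t\beta_t}{p_i})\tfrac{1-\alpha_t}{1-\alpha_t\beta_t}\,\delta_t^{(i)}(l)$ for $l < t$, whose multiplier is strictly below $1-\alpha_t$ whenever $p_i < 1$ and $\beta_t > 0$; there is no averaging to recover equality because the $\delta$'s are not random.

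What is missing is the convexity step that the paper uses to bridge this gap. With the deterministic $\delta$'s, taking $\mathbb{E}_{b_t}$ acts only on the random point $v_{t+1}^{(i)}$ and produces $\frac{\alpha_t}{p_i}\bigl(p_i\,\psi_i(\tilde v_{t+1}^{(i)}) + (1-p_i)\,\psi_i(w_t^{(i)})\bigr)$. One then has to expand $\psi_i(w_t^{(i)})$ by convexity along $w_t = (1-\beta_t)v_t + \beta_t y_t$ and then $y_t = \frac{(1-\alpha_t)x_t + \alpha_t(1-\beta_t)v_t}{1-\alpha_t\beta_t}$, at which point the $\psi_i(v_t^{(i)})$ contributions cancel against the negative $\delta_{t+1}^{(i)}(t)$ term, and finally the bound $\psi_i(x_t^{(i)}) \leq \hat\psi_t^{(i)}$ converts the $\psi_i(x_t^{(i)})$ contribution into $\hat\psi_t^{(i)}$. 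The resulting statement is an \emph{inequality}, not the exact identity your expectation computation would yield. Your proposal omits this chain of Jensen steps entirely, and that is where the actual content of the lemma lies; the expectation ``cancellation involving $1-\alpha_t\beta_t$'' that you invoke is not available once you commit to the decomposition that keeps the $\delta$'s nonnegative.
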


Note that Lemma~\ref{lemma:lyapunov_psi} is a small generalization to arbitrary sampling probabilities of the beginning of the proof in~\citep{lin2015accelerated}. We now introduce and Lemma~\ref{lemma:master_inequality}, which is the main inequality from which the rest of the proof follows directly.

\begin{lemma}
\label{lemma:master_inequality}
For any block of coordinates $b$, the following inequality holds:
\begin{align}
\label{eq:sc_prox}
\begin{split}
    \frac{1}{2\eta_t} [&\|v_{t+1} - \theta^\star\|^2_{A^\dagger A} + \|v_{t+1} - w_t\|^2_{A^\dagger A} - \|\theta^\star - w_t\|^2_{A^\dagger A}] \\
    &\leq \langle P_b^\dagger \nabla q_A(y_t), \theta^\star - v_{t+1} \rangle_{A^\dagger A} + \sum_{i \in b}\frac{1}{p_i} \left[\psi_i\left({\theta^\star}^{(i)}\right) - \psi_i\left(v_{t+1}^{(i)}\right)\right].
    \end{split}
\end{align}
\end{lemma}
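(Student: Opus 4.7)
The plan is to reduce the three-term quadratic expression on the left to a single inner product via a polarization identity, and then control that inner product using the optimality condition of the proximal step together with the subgradient inequality for $\psi$.

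First, I would apply the polarization identity $\|a\|^2_{A^\dagger A} + \|b\|^2_{A^\dagger A} - \|a - b\|^2_{A^\dagger A} = 2 \langle a, b \rangle_{A^\dagger A}$, which holds for any symmetric positive semidefinite matrix, with $a = v_{t+1} - \theta^\star$ and $b = v_{t+1} - w_t$ (so that $a - b = w_t - \theta^\star$). This rewrites the left-hand side as $\eta_t^{-1} \langle v_{t+1} - \theta^\star, v_{t+1} - w_t \rangle_{A^\dagger A}$, so that it suffices to upper bound this single inner product by the right-hand side.

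Second, I would use the prox update to give an explicit expression for $v_{t+1} - w_t$. For each $i \in b$, the optimality condition of $v_{t+1}^{(i)} = \prox_{\eta_t p_i^{-1} \psi_i}(v_{t+1/2}^{(i)})$ yields a subgradient $s^{(i)} \in \partial \psi_i(v_{t+1}^{(i)})$ such that $v_{t+1}^{(i)} = v_{t+1/2}^{(i)} - \eta_t p_i^{-1} s^{(i)}$; for $i \in b$ with $\psi_i = 0$ we set $s^{(i)} = 0$, and for $i \notin b$ we have $v_{t+1}^{(i)} = w_t^{(i)}$. Combining with $v_{t+1/2} = w_t - \eta_t P_b^\dagger \nabla q_A(y_t)$ gives the vector identity
\begin{equation*}
v_{t+1} - w_t = -\eta_t P_b^\dagger \nabla q_A(y_t) - \eta_t P_b^\dagger s,
\end{equation*}
where $s$ is the subgradient vector supported on coordinates $i \in b$ with $\psi_i \neq 0$.

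Third, I would substitute this identity into the inner product and split into a gradient term and a subgradient term. The gradient term produces exactly $\langle P_b^\dagger \nabla q_A(y_t), \theta^\star - v_{t+1} \rangle_{A^\dagger A}$ after rearrangement. For the subgradient term, the key observation, and the main obstacle of the proof, is that $\langle v_{t+1} - \theta^\star, P_b^\dagger s \rangle_{A^\dagger A}$ must be converted into the ordinary Euclidean inner product so that the coordinate-wise convexity inequality $\langle s^{(i)}, v_{t+1}^{(i)} - \theta^{\star(i)} \rangle \geq \psi_i(v_{t+1}^{(i)}) - \psi_i(\theta^{\star(i)})$ can be applied. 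This is where Assumption~\ref{assumption:gen_apcg} is crucial: since $P_b^\dagger s$ is supported exactly on coordinates $i$ with $\psi_i \neq 0$, and for such $i$ the assumption $R_i = 1$ implies $A^\dagger A e_i = e_i$ (because $A^\dagger A$ is a projector), we get $A^\dagger A (P_b^\dagger s) = P_b^\dagger s$ and therefore $\langle v_{t+1} - \theta^\star, P_b^\dagger s \rangle_{A^\dagger A} = \langle v_{t+1} - \theta^\star, P_b^\dagger s \rangle$. Applying the coordinate-wise subgradient inequality to this expression yields the bound $\sum_{i \in b} p_i^{-1}[\psi_i(\theta^{\star(i)}) - \psi_i(v_{t+1}^{(i)})]$, which combined with the gradient term completes the proof.
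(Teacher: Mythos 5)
Your proposal is correct and takes a genuinely different route from the paper's proof. The paper splits the coordinates into three cases (coordinates in $b$ with $\psi_i=0$, coordinates in $b$ with $\psi_i\neq 0$, and coordinates outside $b$), handles the smooth ones by expanding squares, handles the nonsmooth ones via the strong convexity of $V_i^t$ around its minimizer $v_{t+1}^{(i)}$, and then stitches the three cases together using the orthogonal decomposition of the $A^\dagger A$ semi-norm in Equation~\eqref{eq:vt1proj}. Your approach instead collapses the entire left-hand side to $\eta_t^{-1}\langle v_{t+1}-\theta^\star, v_{t+1}-w_t\rangle_{A^\dagger A}$ via polarization, writes $v_{t+1}-w_t$ globally as $-\eta_t P_b^\dagger(\nabla q_A(y_t)+s)$ using the prox optimality condition with subgradient vector $s$, and then splits into a gradient term (which matches exactly) and a subgradient term. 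The paper's strong-convexity step for $V_i^t$ and your subgradient inequality on $\psi_i$ are equivalent in content, but your packaging is more unified: a single vector identity replaces the three-way case split, and the role of Assumption~\ref{assumption:gen_apcg} becomes transparent as the statement $A^\dagger A P_b^\dagger s = P_b^\dagger s$, which lets you drop the semi-norm and apply the coordinate-wise subgradient inequality directly. Both arguments ultimately rely on the same two ingredients (prox optimality and the projection property of $A^\dagger A$ on $\psi_i\neq 0$ coordinates), but your derivation is arguably cleaner and avoids the coordinate-level bookkeeping.
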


\begin{proof}
We note $v_{t+1}^\perp$ the restriction of $v_{t+1}$ to coordinates $i$ such that $\psi_i = 0$. Similarly, we note $b^\perp$ the restriction of the block $b$ to coordinates $i$ such that $\psi_i = 0$. With these notations, we write:
\begin{align}
\label{eq:sc_prox_perp}
\begin{split}
    \frac{1}{2\eta_t} [\|v_{t+1}^{\perp} - & {\theta^\star}^{\perp}\|^2_{A^\dagger A} + \|v_{t+1}^{\perp} - w_t^{\perp}\|^2_{A^\dagger A} - \|{\theta^\star}^{\perp} - w_t^{\perp}\|^2_{A^\dagger A}] \\
    &\leq \sum_{i \in b^\perp}\frac{1}{p_i}\left[\langle \nabla_i q_A(y_t), \theta^\star - v_{t+1} \rangle_{A^\dagger A} + \psi_i\left({\theta^\star}^{(i)}\right) - \psi_i\left(v_{t+1}^{(i)}\right)\right].
    \end{split}
\end{align}
Equation~\eqref{eq:sc_prox_perp} follows directly from using $v_{t+1}^\perp = w_t^\perp - \sum_{i \in b^\perp} \frac{\eta_t}{p_i}\nabla_i q_A(y_t)$ and basic algebra (expanding the squared terms).

If $i \in b$ is such that $\psi_i \neq 0$, we use the strong convexity of $V_i^t$ at points $v_{t+1}^{(i)}$ (its minimizer, by definition) and ${\theta^\star}^{(i)}$ ($i$-th coordinate of a minimizer of $F$) to write that $V_i^t(v_{t+1}^{(i)}) + \frac{B_{t+1}p_i}{2a_{t+1}}\|v_{t+1}^{(i)} - {\theta^\star}^{(i)}\|^2 \leq V_i^t({\theta^\star}^{(i)})$. This is a key step from the proof of~\citet{lin2015accelerated} and uses the same arguments as Lemma~3 from~\citet{fercoq2015accelerated}. Then, expanding the $V_i^t$ terms yields:
\begin{align*}
     &\|v_{t+1}^{(i)} - {\theta^\star}^{(i)} \|^2 + \|v_{t+1}^{(i)} - w_t^{(i)} + \frac{a_{t+1}}{B_{t+1} p_i}\nabla_i q_A(y_t)\|^2\\
      &- \|{\theta^\star}^{(i)} - w_t^{(i)} + \frac{a_{t+1}}{B_{t+1} p_i}\nabla_i q_A(y_t)\|^2 \leq  \frac{2a_{t+1}}{B_{t+1}p_i}\left[\psi_i\left({\theta^\star}^{(i)}\right) - \psi_i\left(v_{t+1}^{(i)}\right)\right].
\end{align*}
If we pull gradient terms out of the squares this yields:
\begin{align}
\label{eq:sc_prox_i}
\begin{split}
    \frac{1}{2\eta_t} [\|v_{t+1}^{(i)} - &{\theta^\star}^{(i)}\|^2_{A^\dagger A} + \|v_{t+1}^{(i)} - w_t^{(i)}\|^2_{A^\dagger A} - \|{\theta^\star}^{(i)} - w_t^{(i)}\|^2_{A^\dagger A}] \\
    &\leq \frac{1}{p_i}\left[\langle \nabla_i q_A(y_t), \theta^\star - v_{t+1} \rangle_{A^\dagger A} + \psi_i\left({\theta^\star}^{(i)}\right) - \psi_i\left(v_{t+1}^{(i)}\right)\right].
    \end{split}
\end{align}
Finally, if $i \notin b$ is such that $\psi_i \neq 0$ then $v_{t+1}^{(i)} = w_t^{(i)}$ and so 
$$\|v_{t+1}^{(i)} - {\theta^\star}^{(i)}\|^2_{A^\dagger A} + \|v_{t+1}^{(i)} - w_t^{(i)}\|^2_{A^\dagger A} - \|{\theta^\star}^{(i)} - w_t^{(i)}\|^2_{A^\dagger A} = 0.$$
Note that $A^\dagger Ae_i = e_i$ for all $i$ such that $\psi_i \neq 0$ since $e_i^\top A^\dagger Ae_i = 1$ and $A^\dagger A$ is a projector. Therefore,
\begin{equation}
\label{eq:vt1proj}
    \|v_{t+1} - \theta^\star\|^2_{A^\dagger A} = \|v_{t+1}^\perp - {\theta^\star}^\perp\|^2_{A^\dagger A} + \sum_{i, \psi_i \neq 0} \|v_{t+1}^{(i)} - {\theta^\star}^{(i)}\|^2_{A^\dagger A}
\end{equation}
so we can sum Equation~\eqref{eq:sc_prox_perp} with Equation~\eqref{eq:sc_prox_i} for all $i$ to finish the proof.
\end{proof}

\begin{proof}[Proof of Theorem~\ref{thm:gen_apcg}]
This proof follows the same general structure as Nesterov and Stich ~\citep{nesterov2017efficiency}. In particular, it follows from expanding the $\|v_{t+1} - \theta^\star\|^2$ term. In the original proof, $v_{t+1} = w_t - g$ where $g$ is a gradient term so the expansion is rather straightforward. In our case, $v_{t+1}$ is defined by a proximal mapping so a bit more work is required. Yet, similar terms appear, along with the function values of the non-smooth term that we control with Lemma~\ref{lemma:lyapunov_psi}. This expansion is done by Lemma~\ref{lemma:master_inequality}, which relies on using the strong convexity of the proximal mapping. 

We now evaluate each term of Equation~\eqref{eq:sc_prox}. First of all, we use that $y_t - x_{t+1} = \alpha_t P_b^\dagger A^\dagger A (w_t - v_{t+1})$ to write:
\begin{align*}
    &\esp{\langle P_b^\dagger \nabla q_A(y_t), \theta^\star - v_{t+1} \rangle_{A^\dagger A}} \\
    & =\esp{\langle P_b^\dagger\nabla q_A(y_t), \theta^\star - w_t\rangle_{A^\dagger A}} + \esp{\nabla q_A(y_t)^\top  P_b^\dagger A^\dagger A (w_t - v_{t+1})} \\
    &= \langle \nabla q_A(y_t), \theta^\star - w_t \rangle_{A^\dagger A} + \alpha_t^{-1} \esp{\nabla q_A(y_t)^\top  (y_t - x_{t+1})}.
\end{align*}
The rest of this proof closely follows the analysis from Hendrikx et al.~\citep{hendrikx2018accelerated}, which is an adaptation of Nesterov and Stich~\citep{nesterov2017efficiency} to strong convexity on a subspace. The main difference is that it is also necessary to control the function values of $\psi$, which is done using Lemma~\ref{lemma:lyapunov_psi}. For the first term, we use the strong convexity of $f$ as well as the fact that $w_t = y_t - \frac{1 - \alpha_t}{\alpha_t} (x_t - y_t)$ to obtain:
\begin{align*}
a_{t+1}& \nabla q_A(y_t)^\top  A^\dagger A (\theta^\star - w_t) = a_{t+1} \nabla q_A(y_t)^\top  A^\dagger A \left(\theta^\star - y_t + \frac{1 - \alpha_t}{\alpha_t}(x_t - y_t)\right)\\
&\leq a_{t+1} \left(q_A(\theta^\star) - q_A(y_t) - \frac{1}{2} \sigma_A \|y_t - \theta^\star\|^2_{A^\dagger A} + \frac{1 - \alpha_t}{\alpha_t}(q_A(x_t) - q_A(y_t)) \right) \\
&\leq a_{t+1} q_A(\theta^\star) - A_{t+1} q_A(y_t) + A_t q_A(x_t) - \frac{1}{2} a_{t+1} \sigma_A \|y_t - \theta^\star\|^2_{A^\dagger A}.
\end{align*}
For the second term we use the smoothness of $q_A$ and then the fact that $x_{t+1} - y_t$ has support on $U_k$ only (just like $v_{t+1} - w_t$), as well as the fact that $A^\dagger A$ is symmetric to obtain:
\begin{align*}
    &\frac{a_{t+1}}{\alpha_t}\nabla q_A(y_t)^\top (y_t - x_{t+1})\\
    &\leq A_{t+1}\left[q_A(y_t) - q_A(x_{t+1})\right] + \frac{a_{t+1}}{2\alpha_t}\|x_{t+1} - y_t\|^2_M \\
    &\leq A_{t+1}\left[q_A(y_t) - q_A(x_{t+1})\right] + \frac{a_{t+1}\alpha_t}{2}\|A^\dagger A(v_{t+1} - w_t)\|^2_{P_b^\dagger M P_b^\dagger}\\
    &\leq A_{t+1}\left[q_A(y_t) - q_A(x_{t+1})\right] +\frac{a_{t+1}^2 \lambda_{\max}((A^\dagger A P_b^\dagger M P_b^\dagger A^\dagger A)}{2A_{t+1}} \|v_{t+1} - w_t\|^2_{A^\dagger A}.
\end{align*}
Noting $\Delta q_A(x_t) = \esp{q_A(x_t)} - q_A(\theta^\star)$ and remarking that $a_{t+1} = A_{t+1} - A_t$, we obtain, using that $\alpha_t = \frac{a_{t+1}}{A_{t+1}}$: 
\begin{align*}
    &\esp{a_{t+1}\langle P_b^\dagger \nabla q_A(y_t), \theta^\star - v_{t+1} \rangle_{A^\dagger A}}\\
    &\leq A_t \Delta q_A(x_t) - A_{t+1} \Delta q_A(x_{t+1})+ \frac{B_{t+1}}{2}\esp{ \|w_t - v_{t+1}\|^2_{A^\dagger A}}- \frac{a_{t+1}\sigma_A}{2}\|y_t - \theta^\star\|^2_{A^\dagger A}.
\end{align*}
Using Lemma~\ref{lemma:lyapunov_psi}, we derive in the same way:
\begin{align*}
    &\esp{\frac{a_{t+1}}{p_i} \left[\psi_i\left({\theta^\star}^{(i)}\right) - \psi_i\left(v_{t+1}^{(i)}\right)\right]}\\
     &= a_{t+1}\psi(\theta^\star) - A_{t+1} \alpha_t  \psi(\tilde{v}_{t+1})\\
    &\leq A_t \left(\esp{\hat{\psi}_t} - \psi(\theta^\star)\right) - A_{t+1}\left(\esp{\hat{\psi}_{t+1}} - \psi(\theta^\star) \right).
\end{align*}
Now, we can multiply Equation~\eqref{eq:sc_prox} by $\frac{a_{t+1}}{p_i}$ and take the expectation over $i$. The $\|v_{t+1} - w_t\|^2_{A^\dagger A}$ terms cancel and we obtain:
\begin{align*}
\frac{B_{t+1}}{2}\esp{\|v_{t+1} - \theta^\star\|^2_{A^\dagger A}}& + A_{t+1} \Delta \hat{F}_A(x_{t+1})\\
&\leq A_t \Delta \hat{F}_A(x_t) + \frac{B_{t+1}}{2}\|w_t - \theta^\star\|^2_{A^\dagger A} - \frac{a_{t+1}\sigma_A}{2}\|y_t - \theta^\star\|^2_{A^\dagger A},
\end{align*}
where $\Delta \hat{F}_A(x_t) = \Delta q_A(x_t) +  \esp{\hat{\psi}_t} - \psi(\theta^\star)$. Convexity of the squared norm yields $\|w_t - \theta^\star\|^2_{A^\dagger A} \leq (1 - \beta_t)\|v_t - \theta^\star\|^2_{A^\dagger A} + \beta_t \|y_t - \theta^\star\|^2_{A^\dagger A}$. Now remarking that $B_{t+1}(1 - \beta_t) = B_t$ and $a_{t+1}\sigma_A = B_{t+1} \beta_t$, and summing the inequalities until $t=0$, we obtain:
\begin{align*}
B_{t}\|v_{t} - \theta^\star\|^2_{A^\dagger A} + 2A_{t} \Delta \hat{F}_A(x_{t}) \leq 2A_0 \Delta F_A(x_0) + B_0 \|v_0 - \theta^\star\|^2_{A^\dagger A}.
\end{align*}
We finish the proof by using the fact that $\psi(x_t) \leq \hat{\psi}_t$ and $\psi(x_0) = \hat{\psi}_0$ since $x_0 = v_0$.
\end{proof} 

Now that we have proven Theorem~\ref{thm:gen_apcg}, we can proceed to the proof of Lemma~\ref{lemma:lyapunov_psi}.

\begin{proof}[Proof of Lemma~\ref{lemma:lyapunov_psi}]
This lemma is a generalization of a part of the APCG to arbitrary probabilities (instead of uniform ones). It still uses the fact that $x_t$ can be written as a convex combination of $(v_l)_{l \leq t}$, but it requires to use a different convex combination for each coordinate of $x_t$, thus crucially exploiting separability of the proximal term. If coordinate $i$ is such that $\psi_i = 0$, then $\hat \psi^{(i)}_{t+1} \leq \alpha_t \psi_i(\tilde v_{t+1}^{(i)}) + (1 - \alpha_t) \hat \psi^{(i)}_{t}$ is automatically satisfied for any $\delta_t^{(i)}$. For coordinates $i$ such that $\psi_i \neq 0$ (and so $R_i = 1$), we start by expressing $x_{t+1}$ in terms of $x_t$, $v_{t+1}$ and $v_t$ . More precisely, we write that for any $t > 0$:
\begin{equation*}
     x_{t+1}^{(i)} = y_t^{(i)} + \frac{\alpha_t}{p_i} (v_{t+1}^{(i)} - w_t^{(i)}).
\end{equation*}
Indeed, either coordinate $i$ is updated at time $t$ or $v_{t+1}^{(i)} = w_t^{(i)}$ so the previous equation always holds. We can then develop the $w_t$ and $y_t$ terms to obtain $x_{t+1}^{(i)}$ only in function of $x_t^{(i)}$, $v_t^{(i)}$ and $v_{t+1}^{(i)}$:
\begin{align*}
    x_{t+1}^{(i)} %&= y_t^{(i)} + \alpha_t (v_{t+1} - w_t)^\top  A^\dagger A P_b e_i\\
    % &= y_t^{(i)} + \frac{\alpha_t}{p_i} (v_{t+1}^{(i)} - w_t^{(i)})\\
    &= \frac{\alpha_t}{p_i}v_{t+1}^{(i)} + \left(1 - \frac{\alpha_t \beta_t}{p_i}\right) y_t^{(i)}  - \frac{\alpha_t(1 - \beta_t)}{p_i}v_{t}^{(i)}\\
    &= \frac{\alpha_t}{p_i}v_{t+1}^{(i)} + \left(1 - \frac{\alpha_t \beta_t }{p_i}\right)\frac{(1 - \alpha_t) x_t^{(i)} + \alpha_t(1 - \beta_t)v_t^{(i)}}{1 - \alpha_t \beta_t}  - \frac{\alpha_t(1 - \beta_t)}{p_i}v_{t}^{(i)}\\
    &= \frac{\alpha_t }{p_i}v_{t+1}^{(i)} + \alpha_t(1 - \beta_t)\left[\frac{1 - \frac{\alpha_t \beta_t }{p_i}}{1 - \alpha_t \beta_t} - \frac{1}{p_i} \right]v_t^{(i)} +  \left(1 - \frac{\alpha_t \beta_t }{p_i}\right)\frac{(1 - \alpha_t)}{1 - \alpha_t \beta_t} x_t^{(i)}\\
    &= \frac{\alpha_t }{p_i}v_{t+1}^{(i)} + \frac{\alpha_t(1 - \beta_t)}{1 - \alpha_t \beta_t}\left(1 - \frac{1}{p_i}\right)v_t^{(i)} + \left(1 - \frac{\alpha_t \beta_t }{p_i}\right)\frac{(1 - \alpha_t)}{1 - \alpha_t \beta_t} x_t^{(i)} .
\end{align*}
At this point, all coefficients sum to 1. Indeed, they all sum to 1 at the first line and we have expressed $w_t^{(i)}$ and then $y_t^{(i)}$ as convex combinations of other terms, thus keeping the value of the sum unchanged. Yet, $p_i < 1$ so the coefficient on the second term is negative. Fortunately, it is possible to show that the $v_t^{(i)}$ term in the decomposition of $x_t^{(i)}$ is large enough so that the $v_t^{(i)}$ term in the decomposition of $x_{t+1}^{(i)}$ is positive. More precisely, we now show by recursion that for $t \geq 0$:
\begin{equation}
\label{eq:xt_convex}
    x_{t+1}^{(i)} = \frac{\alpha_t }{p_i}v_{t+1}^{(i)} + \sum_{l=0}^{t} \delta^{(i)}_{t+1}(l) v_l^{(i)},
\end{equation}
with $\delta^{(i)}_{t+1}(l) \geq 0$ for $l \leq t$. For $t=0$, $x_0 = v_0$ and $x_1^{(i)} = \frac{\alpha_0 }{p_i}v_1^{(i)} + \left( 1 - \frac{\alpha_0 }{p_i}\right)v_0^{(i)}$. We now assume that Equation~\eqref{eq:xt_convex} holds for a given $t > 0$, and expand $\delta_{t+1}^{(i)}(t)$ to show that it is positive. Using that $\delta_{t}^{(i)}(t) = \frac{\alpha_t}{p_i}$, we write:
\begin{align*} 
    \delta_{t+1}^{(i)}(t) = &\frac{\alpha_t(1 - \beta_t)}{1 - \alpha_t \beta_t}\left(1 - \frac{1}{p_i}\right) + \frac{\alpha_t }{p_i} \left(1 - \frac{\alpha_t \beta_t }{p_i}\right)\frac{(1 - \alpha_t)}{1 - \alpha_t \beta_t} \\
    &= \frac{\alpha_t}{1 - \alpha_t \beta_t}\left[(1 - \beta_t) \left(1 - \frac{1}{p_i}\right) + \frac{(1 - \alpha_t) }{p_i}\left(1 - \frac{\alpha_t \beta_t }{p_i}\right)\right]\\
    &= \frac{\alpha_t}{1 - \alpha_t \beta_t}\left[1 - \beta_t - \frac{1}{p_i} + \frac{\beta_t }{p_i} + \frac{1}{p_i} - \frac{\alpha_t }{p_i} - (1 - \alpha_t)\frac{\alpha_t \beta_t}{p_i^2}\right]\\
    &= \frac{\alpha_t}{1 - \alpha_t \beta_t}\left[\left(1 - \beta_t - \frac{\alpha_t }{p_i}\right) + \frac{\beta_t }{p_i}\left(1 - (1 - \alpha_t)\frac{\alpha_t }{p_i}\right)\right].
\end{align*}
We conclude that $\delta_{t+1}^{(i)}(t) \geq 0$ since $1 - \beta_t - \frac{\alpha_t }{p_i} \geq 0$. Note that this condition can be weakened to $1 - \frac{\alpha_t^2 }{p_i^2} \geq 0$ when $\beta_t = \alpha_t$ or when $\beta_t = 0$. We also deduce from the form of $x_{t+1}^{(i)}$ that for $l < t$, the only coefficients on $v_l^{(i)}$ in the development of $x_{t+1}^{(i)}$ come from the $x_t^{(i)}$ term and so:
\begin{equation}
    \delta_{t+1}^{(i)}(l) = \left(1 - \frac{\alpha_t \beta_t }{p_i}\right)\frac{(1 - \alpha_t)}{1 - \alpha_t \beta_t} \delta_t^{(i)}(l),
\end{equation}
so these coefficients are positive as well. Since they also sum to $1$, it implies that $x_t^{(i)}$ is a convex combination of the $v_l^{(i)}$ for $l \leq t$, and we use the convexity of $\psi_i$ to write: $$\psi_i(x_t^{(i)}) = \psi_i\left(\sum_{l=0}^t \delta_t^{(i)}(l) v_l^{(i)}\right) \leq \sum_{l=0}^t \delta_t^{(i)}(l) \psi_i(v_l^{(i)}) =  \hat{\psi}^{(i)}_t.$$ 
Now, we can properly express $\hat{\psi}^{(i)}_{t+1}$ using the decomposition of $x_{t+1}^{(i)}$ in terms of $\delta_{t+1}^{(i)}$:
\begin{align*}
    \esp{\hat{\psi}^{(i)}_{t+1}} &= \esp{\frac{\alpha_t }{p_i}\psi_i(v_{t+1}^{(i)})} + \frac{\alpha_t(1 - \beta_t)}{1 - \alpha_t \beta_t}\left(1 - \frac{1}{p_i}\right)\psi_i(v_t^{(i)})\\
    &+ \left(1 - \frac{\alpha_t \beta_t }{p_i}\right)\frac{1 - \alpha_t}{1 - \alpha_t \beta_t}\sum_{l=0}^t \delta_t^{(i)}(l) \psi_i(v_l^{(i)})\\
    &= \alpha_t \psi_i(\tilde v_{t+1}^{(i)}) + \left(1 - p_i\right)\frac{\alpha_t}{p_i}\psi_i(w_{t}^{(i)})  + \frac{\alpha_t(1 - \beta_t)}{1 - \alpha_t \beta_t}\left(1 - \frac{1}{p_i}\right)\psi_i(v_t^{(i)})\\
    &+ \left(1 - \frac{\alpha_t \beta_t }{p_i}\right)\frac{1 - \alpha_t}{1 - \alpha_t \beta_t}\hat{\psi}^{(i)}_{t}\\
\end{align*}
At this point, we use the convexity of $\psi_i$ to develop $\psi_i(w_{t}^{(i)})$ and then $\psi_i(y_{t}^{(i)})$ in the following way:
\begin{align*}
     \psi_i(w_{t}^{(i)}) &\leq (1 - \beta_t)\psi_i(v_{t}^{(i)}) + \beta_t \psi_i(y_{t}^{(i)}) \\
     &\leq (1 - \beta_t)\psi_i(v_{t}^{(i)}) + \frac{\beta_t}{1 - \alpha_t \beta_t}\left[(1 - \alpha_t) \psi_i(x_{t}^{(i)}) + \alpha_t(1 - \beta_t) \psi_i(v_{t}^{(i)})\right]\\
     &= \frac{1 - \beta_t}{1 - \alpha_t \beta_t}\psi_i(v_{t}^{(i)}) + \frac{\beta_t(1 - \alpha_t)}{1 - \alpha_t \beta_t} \psi_i(x_{t}^{(i)}).
\end{align*}

If we plug these expressions into the development of $\esp{\hat{\psi}^{(i)}_{t+1}}$, the $\psi_i(v_{t}^{(i)})$ terms cancel and we obtain:

\begin{align*}
    \esp{\hat{\psi}^{(i)}_{t+1}} &\leq \alpha_t \psi_i(\tilde v_{t+1}^{(i)}) + \alpha_t\left(\frac{1}{p_i} - 1\right)\frac{\beta_t(1 - \alpha_t)}{1 - \alpha_t \beta_t} \psi_i(x_{t}^{(i)}) + \left(1 - \frac{\alpha_t \beta_t }{p_i}\right)\frac{1 - \alpha_t}{1 - \alpha_t \beta_t}\hat{\psi}^{(i)}_{t}\\
\end{align*}

We now use the fact that $\psi_i(x_{t}^{(i)}) \leq \hat{\psi}^{(i)}_{t}$ (by convexity of $\psi_i$) to get:

\begin{align*}
    \esp{\hat{\psi}^{(i)}_{t+1}} &\leq \alpha_t \psi_i(\tilde v_{t+1}^{(i)}) + \frac{1 - \alpha_t}{1 - \alpha_t \beta_t}\left[\alpha_t \beta_t \left(\frac{1}{p_i} - 1\right) + \left(1 - \frac{\alpha_t \beta_t }{p_i}\right)\right]\hat{\psi}^{(i)}_{t}\\
    &\leq \alpha_t \psi_i(\tilde v_{t+1}^{(i)}) + (1 - \alpha_t)\hat{\psi}^{(i)}_{t}
\end{align*}

This holds for any coordinate $i$ and so $\esp{\hat{\psi}_{t+1}} \leq \alpha_t \psi(\tilde v_{t+1} + (1 - \alpha_t)\hat{\psi}_{t}$ for all $t \geq 0$, which finishes the proof of the lemma.
\end{proof}

\subsection{Proof of the corollaries}
Now that that we have proven the main result, we show how specific choices of parameters lead to fast algorithms. 

\begin{proof}[Proof of Corollary~\ref{corr:sc_apcg}]
If $\sigma_A > 0$, then the parameters can be chosen as $\alpha_t = \beta_t = \rho = \frac{\sqrt{\sigma_A}}{S}$, with $A_t = (1 - \rho)^{-t}$ and $B_t = \sigma_A A_t$. These expressions can then be plugged into the recursion to verify that they do satisfy it. This choice of keeping a constant $\alpha_t$ is classic and slightly suboptimal for small values of $t$ compared with the choice made~\citet{nesterov2017efficiency}.
\end{proof}

\begin{proof}[Proof of Corollary~\ref{corr:cvx_apcg}]
We first prove that $\alpha_t$ can actually be obtained by a simple recursion. This comes from the (well-known) fact that the recursions in~\citet{lin2015accelerated} and~\citet{nesterov2017efficiency} are actually the same. If $\sigma_A = 0$ then we have to choose $\beta_t = 0$ for all $t$. Then, we can choose $B_t = B_0$ for any $B_0 > 0$. This allows to write $(A_{t+1} - A_t)^2S^2 = A_tB_0$ for all $t$, which is a second degree polynomial in the variable $A_{t+1}$. We choose the positive root in order to have $a_{t+1} \geq 0$, which yields: $$A_{t+1} = A_t + \frac{B_0}{2S^2}\left(1 + \sqrt{1 + 4S^2 B_0^{-1} A_t}\right).$$
Coefficients $(a_t)$ can be computed using
$$a_{t+1} = A_{t+1} - A_t = \frac{B_0}{2S^2}\left(1 + \sqrt{1 + 4S^2 B_0^{-1} A_t}\right),$$
and so we use the fact that $a_{t+1}S^2 = A_{t+1}B_{t+1}$, which can be rewritten as $\alpha_t = \frac{B_0}{a_{t+1}S^2}$. to obtain the sequence $(\alpha_t)$ as:
$$\alpha_t = \frac{2}{1 + \sqrt{1 + 4S^2B_0^{-1}A_t}}.$$ 
In particular, $$A_t = \left[\left(\frac{2}{\alpha_t} - 1\right)^2 - 1\right]\frac{B_0}{4S^2}.$$
This expression for $A_t$ and $A_{t+1}$ can be substituted in the relation $A_{t+1} = A_t + \frac{B_0}{a_{t+1} S^2}$, which yields after some simplifications: $$\alpha_{t+1}^{-2} - \alpha_{t+1}^{-1} - \alpha_t^{-2} = 0,$$ which is a second degree polynomial in the variable $\alpha_{t+1}^{-1}$. Solving for $\alpha_t$ leads to
$$\alpha_{t+1} = \frac{2}{1 + \sqrt{1 + 4\alpha_t^{-2}}} = \frac{\sqrt{\alpha_t^4 + 4 \alpha_t^2} - \alpha_t^2}{2},$$
%Multiplying and dividing by $1 - \sqrt{1 + 4\alpha_t^{-2}}$, we get
which is the exact same recursion as in~\citet{lin2015accelerated} and~\citet{fercoq2015accelerated}. In particular, only the value of $\alpha_0$ matters and only the sequence $\alpha_t$ actually needs to be computed, since the only coefficients needed are the $\alpha_t$ and $\frac{a_{t+1}}{B_{t+1}} = \frac{1}{\alpha_t S^2}$.

We would like to choose the highest possible $\alpha_0$, such that $1 - \alpha_0 / p_{\min} \geq 0$, so we take $\alpha_0 = p_{\min}$ where $p_{\min} = \min_i p_i$ where the minimum is over all coordinates such that $\psi_i \neq 0$. This is enough to respect the condition $\alpha_t \leq p_{\min}$ since $(\alpha_t)$ is a decreasing sequence. This leads to $$A_0 = \left[\left(\frac{2}{p_{\rm min}} - 1\right)^2 - 1\right]\frac{B_0}{4S^2} \leq \frac{B_0}{p_{\rm min}^2 S^2}.$$ Since $A_0 \geq 0$, a direct recursion yields $A_t \geq \frac{B_0 t^2}{4S^2}$. We call $r_t^2 = \|v_0 - \theta^\star_A\|^2_{A^\dagger A} - \mathbb{E}[\|v_t - \theta^\star_A\|^2_{A^\dagger A}]$, and $\Delta F_t =  \mathbb{E}[q_A(x_t) + \psi(x_t)] - q_A(\theta^\star_A) + \psi_A(\theta^\star_A)$, then:
$$F_t \leq \frac{1}{2A_t}\left(B_0r_t^2 + 2A_0F_0\right) = \frac{B_0}{2A_t}\left(r_t^2 + \frac{2}{S^2 p_{\min}^2}F_0\right) \leq \frac{2S^2}{t^2}\left(r_t^2 + \frac{2}{S^2 p_{\min}^2}F_0\right),$$
which finishes the proof of the rate.

\end{proof}

\section{Algorithm Performances}
\label{app:algo_perfs}
The linear convergence rate of \adfs~is a direct consequence of the generalized APCG convergence theorem. Yet, it is not straightforward to derive hyperparameters that lead to a rate that is fast and that can be easily interpreted. The goal of this section is to choose such parameters when the functions $f_{i,j}$ are smooth, and detail the rate in this case. 

\subsection{Strong convexity of the augmented problem}

The number of iterations required to solve the augmented problem only depends on the conditioning of the augmented problem. The Hessian of $q_A$ is equal to $A^\top  \Sigma^{-1} A$ and the rate of Accelerated Proximal Coordinate Gradient depends on $\lambda_{\max}((A^\dagger A)U_b P^{-1} A^\top  \Sigma^{-1} A P^{-1} U_b (A^\dagger A))$. We study in this section the smallest eigenvalue of $A^\top  \Sigma^{-1} A$, and in particular prove Lemma~\ref{lemma:main_sigma_A_lb}. 
We start by proving a first lemma.

\begin{lemma}\label{lemma:ker_u_v}
Let $U, V \in \R^d$ be two symmetric positive semi-definite matrices. Let $x \in \Ker(U - V)$, that can be decomposed into $x= x_+ + x_\perp$, with $x_\perp \in \Ker(U)$ and $x_+ \in \Ker(U)^\perp$. Then, $x_+^\top U x_+ \leq x_+^\top V x_+$, and if $x_+ = 0$ then $x_\perp \in \Ker(V)$.
\end{lemma}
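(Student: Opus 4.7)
The approach will be to combine the hypothesis $(U-V)x = 0$ with the orthogonal splitting $x = x_+ + x_\perp$ through $\Ker(U)$, exploiting symmetry of $U$ and positive semi-definiteness of $V$ in complementary ways. The key observation is that the decomposition is taken with respect to $\Ker(U)$ (not $\Ker(V)$), so all $U$-cross-terms in any quadratic form involving $x_\perp$ will vanish.

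First I would use symmetry of $U$ to note that $x_\perp \in \Ker(U)$ gives both $U x_\perp = 0$ and $x_\perp^\top U = 0$. Substituting $x = x_+ + x_\perp$ into $(U-V)x = 0$ then isolates the pivotal relation
\begin{equation*}
(U - V)\, x_+ \;=\; V x_\perp,
\end{equation*}
from which everything else will follow by two scalar pairings.

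Next I would test this relation on the left against $x_\perp$ and against $x_+$ separately. Pairing with $x_\perp$ makes the $U$-term drop out and yields $-x_+^\top V x_\perp = x_\perp^\top V x_\perp$. Pairing with $x_+$ yields $x_+^\top U x_+ - x_+^\top V x_+ = x_+^\top V x_\perp$. Substituting the first identity into the right-hand side of the second collapses it to $-\,x_\perp^\top V x_\perp$, which is $\leq 0$ because $V \succeq 0$. This gives the first claim, with equality precisely when $V x_\perp = 0$. The second claim is then immediate: if $x_+ = 0$ then $x = x_\perp \in \Ker(U)$, so $Ux = 0$, and $Ux = Vx$ forces $V x_\perp = 0$.

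I do not anticipate any real obstacle, since the whole argument is two one-line pairings and a single use of $V \succeq 0$. The only point requiring a bit of care is to split $x$ through $\Ker(U)$ rather than through $\Ker(V)$ or $\Ker(U-V)$, which is what makes the $U$-cross-terms vanish in both pairings; any other choice of splitting would leave uncontrolled cross-terms and break the calculation.
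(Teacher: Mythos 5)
Your proof is correct and follows essentially the same route as the paper's: both arguments hinge on the two identities $x_\perp^\top V x_+ = -x_\perp^\top V x_\perp$ and $x_+^\top U x_+ - x_+^\top V x_+ = x_+^\top V x_\perp$, combined with $V \succeq 0$. You obtain them by isolating $(U-V)x_+ = V x_\perp$ and pairing with $x_\perp$ and $x_+$, whereas the paper expands the quadratic form $x_+^\top U x_+ = x^\top U x = x^\top V x$ and then eliminates the cross term — a cosmetic reorganization of the same computation.
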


\begin{proof}
Let $x \in \Ker(U-V)$. We write:
$$x_+^\top U x_+ = x^\top U x =  x^\top V x = x_+^\top V x_+ + 2 x_\perp^\top V x_+ + x_\perp^\top V x_\perp.$$
Besides, $x_\perp^\top (U - V) x = 0$, and so $x_\perp^\top V x_+ = - x_\perp^\top V x_\perp \leq 0$. Therefore, 
$$x_+^\top U x_+ = x_+^\top V x_+ +  x_\perp^\top V x_+ \leq x_+^\top V x_+.$$
Finally, if $x_+ = 0$ then $x_\perp^\top V x_\perp = - x_\perp^\top V x_+ = 0$, and so $x_\perp \in \Ker(V)$.
\end{proof}

\begin{proof}[Proof of Lemma~\ref{lemma:main_sigma_A_lb}]
For any rectangular matrix $Q$, all non-zero singular values of the matrix $Q^\top Q$ are also non-zero singular values of the matrix $QQ^\top $, so we can analyze the spectrum of the matrix $\tilde{L} = \Sigma^{-1/2} A A^\top  \Sigma^{-1/2}$ instead of the spectrum of $A^\top \Sigma^{-1} A$. Recall that $A$ writes:
\begin{equation}
    A = \begin{pmatrix}
    A_\comm \otimes I_d & D_\mu \\
    0 & - D_\mu^{\rm diag}
    \end{pmatrix}
\end{equation}
Then, if we denote $L_\comm$ the Laplacian matrix of the original true graph, the rescaled Laplacian matrix of the augmented graph writes:
\begin{equation}
\tilde{L} = \Sigma^{-1/2}\begin{pmatrix} L_\comm \otimes I_d + D_\mu D_\mu^\top & - D^\mu D_\mu^{\rm diag} \\
- D_\mu^{\rm diag} D_\mu^\top  & (D_\mu^{\rm diag})^2 \end{pmatrix}\Sigma^{-1/2}.
\end{equation}
We define 
\begin{align*}
    P_{\perp} =
\begin{pmatrix}
     P_{11} & & 0 \\
     & \cdots &  \\
     0 & & P_{nm}
\end{pmatrix} \in \R^{nmd \times nmd}.
\end{align*}
If we split $\Sigma$ into two diagonal blocks $\Sigma_\comm = {\rm diag}(\sigma_1, \cdots, \sigma_n) \otimes I_d$ (for the communication nodes), and $\Sigma_\comp$ (for the computation nodes) and apply the block determinant formula, we obtain:
\begin{align*}
    \det&(\tilde{L} - \lambda I_{n(m+1)d}) = \det(\Sigma_\comp^{-\frac{1}{2}} (D_\mu^{\rm diag})^2 \Sigma_\comp^{-\frac{1}{2}} - \lambda I_{nmd})\\
    &\times \det(\Sigma_\comm^{-\frac{1}{2}}[ (L_\comm \otimes I_d) + D_\mu D_\mu^\top - 
    %\lambda A^\dagger A \Sigma_\comm A^\dagger A 
    \lambda \Sigma_\comm -\\
    &D_\mu D_\mu^{\rm diag}  \Sigma_\comp^{-\frac{1}{2}} \left(\Sigma_\comp^{-\frac{1}{2}} (D_\mu^{\rm diag})^2 \Sigma_\comp^{-\frac{1}{2}}- \lambda I_{nmd} \right)^\dagger \Sigma_\comp^{-\frac{1}{2}} D_\mu^{\rm diag} D_\mu^\top] \Sigma_\comm^{-\frac{1}{2}}).
\end{align*}
Now, we use that $\mu_{ij}^2 = \alpha L_{ij}$ for some $\alpha > 0$, and note that:
$$ (\Sigma_\comp^{-\frac{1}{2}} (D_\mu^{\rm diag})^2 \Sigma_\comp^{-\frac{1}{2}})_{ij} = (D_\mu^{\rm diag} \Sigma_{\comp}^{-1} D_\mu^{\rm diag})_{ij} = \mu_{ij}^2 P_{ij} / L_{ij} = \alpha P_{ij}.$$
This can be used along with the fact that $D_\mu D_\mu^\top = D_\mu P_\perp D_\mu^\top$ to rewrite the previous determinant as:
\begin{align*}
    \det&(\tilde{L} - \lambda I_{n(m+1)d}) = \det(\alpha P_\perp - \lambda I_{nmd}) \times \\
    &\det(L_\comm \otimes I_d - \lambda \Sigma_\comm - \alpha D_\mu P_\perp (\alpha P_\perp - \lambda I_{nmd})^\dagger D_\mu^\top).
\end{align*}
Note that since $P_\perp$ is a projector,
$$P_\perp (\alpha P_\perp - \lambda I_{nmd})^\dagger =  ((\alpha P_\perp - \lambda I_{nmd})P_\perp)^\dagger = (\alpha - \lambda)^{-1} P_\perp.$$
Therefore, the non-zero eigenvalues of $A^\top \Sigma^{-1} A$ are the $\lambda$ that satisfy the following equation:
\begin{equation}\label{eq:lambda_det_equation}
    0 =\det(\alpha P_\perp - \lambda I_{nmd}) \det\left( L_\comm \otimes I_d - \lambda \left(\Sigma_\comm + \frac{1}{\alpha - \lambda}D_\mu D_\mu^\top\right)\right).
\end{equation}
We now consider $0 < \lambda \leq \alpha / 2$, so that:
$\Sigma_\comm + \frac{1}{\alpha - \lambda}D_\mu D_\mu^\top \preccurlyeq \tilde{D}_M \otimes I_d$, with 
$$(\tilde{D}_M)_{ii} = \lambda_{\max}\left(\sigma_i I_d + \frac{2}{\alpha}\sum_{j=1}^m \mu_{ij}^2 P_{ij}\right) = \sigma_i + 2 \lambda_{\max}\left(\sum_{j=1}^m L_{ij} P_{ij}\right).$$
Therefore, for any $y \in \Ker(L_\comm \otimes I_d)^\perp$ such that $y \neq 0$, 
$$y^\top(L_\comm \otimes I_d - \Delta_\lambda) y > y^\top ( (L_\comm - \lambda \tilde{D}_M) \otimes I_d) y.$$
In particular we have that if $0 < \lambda < \lambda_{\min}^+(\tilde{D}_M^{-\frac{1}{2}} L_\comm \tilde{D}_M^{-\frac{1}{2}})$ then
\begin{equation}\label{eq:lambda_min_tilde}
    y^\top(L_\comm \otimes I_d - \Delta_\lambda) y > 0.
\end{equation}
Let $x \in {\rm Ker}(L_\comm \otimes I_d - \Delta_\lambda)$ then Lemma~\ref{lemma:ker_u_v} tells us that $x = x_+ + x_\perp$ with $x^+ \in \Ker(L_\comm \otimes I_d)^\perp$ and $x_+^\top (L_\comm \otimes I_d - \Delta_\lambda) x_+ \leq 0$. If $x_+ \neq 0$ then this contradicts Equation~\eqref{eq:lambda_min_tilde} so $x_+ = 0$, meaning that $x \in \Ker(\Delta_\lambda)$ using the second part of Lemma~\ref{lemma:ker_u_v}, and so $x = 0$.

Therefore, if $\lambda$ is such that $0 < \lambda < \min(\lambda_{\min}^+(\tilde{D}_M^{-\frac{1}{2}} L_\comm \tilde{D}_M^{-\frac{1}{2}}), \alpha / 2)$ then by using the eigenvalue characterization given by Equation~\eqref{eq:lambda_det_equation}, $\lambda$ is not an eigenvalue of $A^\top \Sigma^{-1} A$ since ${\rm Ker}(L_\comm - \Delta_\lambda) = \{0\}$, so in particular $\lambda_{\min}^+(A^\top \Sigma^{-1} A) \geq \min(\tilde{D}_M^{-\frac{1}{2}} L_\comm \tilde{D}_M^{-\frac{1}{2}}, \alpha / 2)$.

Besides, $\lambda_{\min}^+(\tilde{D}_M^{-\frac{1}{2}} L_\comm \tilde{D}_M^{-\frac{1}{2}}) = \lambda_{\min}^+(A^\top_\comm \tilde{D}_M^{-1} A_\comm)$ and $A^\top_\comm \tilde{D}_M^{-1} A_\comm$ is independent of $\alpha$ since $\alpha$ only affects the $\mu_{ij}$ weights when $(i,j)$ is a computation edge, and so we can choose $\alpha = 2\lambda_{\min}^+(A_\comm^\top \tilde{D}_M^{-1} A_\comm)$, so that $\lambda_{\min}^+(A^\top \Sigma^{-1} A) \geq \lambda_{\min}^+(A^\top_\comm \tilde{D}_M^{-1} A_\comm)$.
Finally, $\Ker(A^\top  \Sigma^{-1} A) = \Ker(A)$ and so 
\begin{equation*}
    \|x\|^2_{A^\top \Sigma^{-1} A} \geq \lambda_{\min}^+(A^\top \Sigma^{-1} A) \|x\|^2_{A^\dagger A} \geq  \lambda_{\min}^+(A_\comm^\top \tilde{D}_M^{-1} A_\comm) \|x\|^2_{A^\dagger A},
\end{equation*}
which finishes the proof. 
\end{proof}

\subsection{Smoothness of the augmented problem}
The goal of this section is to prove Lemma~\ref{lemma:rate_adfs} by analyzing $\lambda_{\max}((A^\dagger A)^\top  P_b^\dagger M P_b^\dagger A^\dagger A)$ for any block $b$.

\begin{proof}[Proof of Lemma~\ref{lemma:rate_adfs}]
The proof is split into three parts. We first bound the value of $\lambda_{\max}((A^\dagger A)^\top  P_b^\dagger A^\top \Sigma^{-1}A P_b^\dagger A^\dagger A)$ depending on whether $b$ is a communication or a computation block, and then we give a bound on the rate $\rho$.\\

\textbf{Communication blocks.} Under the sampling of Assumption~\ref{assumption:synchronous_sampling}, all coordinates have the same probability $p_e$ of being selected at each step. In this case, $P_b^\dagger = \frac{1}{p_b} U_b$ where $U_b$ is the projector on communication edges that are in $b$ (all the communication edges for Assumption~\ref{assumption:synchronous_sampling}. We denote $V_b \in \R^{(m +1)nd \times (m+1)nd}$ the projector on $\{i, \ \exists j / (i,j) \in b\}$, the set of nodes for which one of their vertices is updated, and write:
\begin{align*}
    P_b^\dagger A^\top  \Sigma^{-1} A P_b^\dagger &=  \frac{1}{p_e^2}U_b A^\top  \Sigma^{-1} A U_b = \frac{1}{p_e^2}U_b A^\top  V_b \Sigma^{-1} V_b A U_b
\end{align*}
In this case, we note $L_b$ the Laplacian of the subgraph defined by the edges in $b$, which is such that $L_b \otimes I_d = A U_b A^\top$, and we use the fact that $\lambda_{\max}(A U_b A^\top ) = \lambda_{\max}(U_b A^\top  A U_b)$ to write:
\begin{equation}
\label{eq:comm_smoothness_general}
    \lambda_{\max}((A^\dagger A)^\top  P_b^\dagger A^\top \Sigma^{-1}A P_b^\dagger A^\dagger A) \leq \frac{\lambda_{\max}(L_b)}{\sigma_{\min} p_e^2}.
\end{equation}

In particular, Equation~\eqref{eq:comm_smoothness_general} allows to consider dynamically changing graphs for which we know that all edges have the same probability of appearing at each step and for which we can bound the Laplacian matrix of any subgraph. This allows to consider a complete underlying communication graph while taking advantage of communications on subgraphs only. If we take $p_e = \pcomm$ (i.e. all communication edges are sampled at each communication step) then this becomes:
$$\lambda_{\max}((A^\dagger A)^\top  P_b^\dagger A^\top \Sigma^{-1}A P_b^\dagger A^\dagger A) \leq \frac{\lambda_{\max}(L)}{\sigma_{\min} \pcomm^2},$$
with $L$ the Laplacian matrix of the original communication graph.\\

\textbf{Computation blocks.} We start with the case in which each node only samples the coordinate associated with one virtual edge. In this case, we take $\lambda \in \R^{E + nmd}$ and write:
\begin{align*}
  &(e_b \otimes \lambda)^\top P_b^\dagger A^\top  \Sigma^{-1} A P_b^\dagger (e_b \otimes \lambda)\\
  &= \sum_{i=1}^n \sum_{j, (i,j) \in b} e_{ij}^\top  A^\top  \sum_{u \in V} \Sigma_{uu}^{-1}  e_u e_u^\top  \sum_{i^\prime=1}^n \sum_{j^\prime, (i^\prime,j^\prime) \in b} A e_{i^\prime j^\prime} \times \lambda_{ij}^\top P_{ij} P_{i^\prime j^\prime} \lambda_{i^\prime j^\prime}\\
  &= \sum_{i=1}^n \sum_{j, (i,j) \in b} \sum_{i^\prime=1}^n \sum_{j^\prime, (i^\prime,j^\prime) \in b} \frac{\mu_{ij}}{p_{ij}} \sum_{u \in V} \Sigma_{uu}^{-1} (e_i - e_j)^\top  e_u e_u^\top  (e_i^\prime - e_j^\prime) \lambda_{ij}^\top P_{ij} P_{i^\prime j^\prime} \lambda_{i^\prime j^\prime}  \\
  &= \sum_{i=1}^n \sum_{j, (i,j) \in b} \frac{\mu_{ij}^2(\sigma_i^{-1} + L_{i,j}^{-1})}{p_{ij}^2}  \times  \|\lambda_{ij}\|^2_{P_{ij}}.% P_{i^\prime j^\prime} \lambda_{i^\prime j^\prime}.
\end{align*}
We deduce that if only one coordinate is sampled per node then we have:
$$\lambda_{\max}(P_b^\dagger A^\top  \Sigma^{-1} A P_b^\dagger) \leq \max_{i,j} \frac{\mu_{ij}^2(\sigma_i^{-1} + L_{i,j}^{-1})}{p_{ij}^2}.$$\\

\textbf{Rate of convergence.}
Recall that the rate of convergence of Synch-ADFS can be written as:
$$\rho^2 = \min_b \frac{\sigma_A}{\lambda_{\max}((A^\dagger A)^\top  P_b^\dagger M P_b^\dagger A^\dagger A)} = \min_b \frac{\lambda_{\min}^+(A_\comm^\top \tilde{D}_M^{-1} A_\comm)}{\lambda_{\max}((A^\dagger A)^\top  P_b^\dagger A^\top \Sigma^{-1} A P_b^\dagger A^\dagger A)},$$
where $(\tilde{D}_M)_{ii} = \sigma_i + 2\lambda_{\max}\left(\sum_{j=1}^m L_{i,j} P_{ij}\right)$. If we take $b$ to be the set of all communication edges, then we obtain: 
\begin{align*}
&\rho_{\rm comm}^2 = \frac{\lambda_{\min}^+(A_\comm^\top \tilde{D}_M^{-1} A_\comm)}{\lambda_{\max}(A_\comm^\top \Sigma_\comm^{-1} A_\comm)}\pcomm^2 = \frac{\gamma \pcomm^2}{\kappa_\comm}, \\
&\rho_{\rm comp}^2 = \min_{ij} \frac{p_{ij}^2 \sigma_A}{\mu_{ij}^2(\sigma_i^{-1} + L_{ij}^{-1})} = \min_{ij} \frac{p_{ij}^2}{2(1 + L_{ij}\sigma_i^{-1})},
\end{align*}
where we used in the second equation that $\mu_{ij}^2 = \alpha L_{ij}$ and $\alpha = 2 \sigma_A$. The constraint on $\rho_{\rm comp}^2$ is that all $p_{ij}$ are normalized separately, i.e. $\sum_{j} p_{ij} = 1$ for each node $i$. Indeed, exactly one sample per node is chosen at each step, and so:
\begin{equation} \label{eq:rho_comp_lb}
    \rho_{\rm comp}^2 = \frac{\pcomp^2}{2 S_i^2} \geq \frac{\pcomp^2}{2 S_{\max}^2}.
\end{equation}
We finally use the concavity of the square root with Jensen inequality to get: $$S_i = \sum_{j=1}^m\sqrt{1 + L_{ij}\sigma_i^{-1}} \leq m\sqrt{\sum_{j=1}^m\frac{1}{m}(1 + L_{ij}\sigma_i^{-1})} =  m\sqrt{1 + \sum_{j=1}^m\frac{L_{ij}}{m\sigma_i}},$$
which yields $S_{\max}^2 \leq m^2 + m\kappa_s$, or $S_{\max} \leq m + \sqrt{m\kappa_s}$.
\end{proof}

\subsection{Execution time}

\begin{proof}[Proof of Theorem~\ref{thm:adfs_speed_precise}]
The execution time of the algorithm $T(K)$ verifies the following bound:
\begin{equation}
    \mathbb{E}[T(K)] = \left(p_{\rm comp} +  \tau p_{\rm comm}\right) K
\end{equation}
Algorithm~\ref{algo:sc_adfs} requires $ - \log(1 / \varepsilon) / \log(1 - \rho)$ iterations to reach error $\varepsilon$. Using that $\log(1 + x) \leq x$ for any $x > -1$, we get that using $K_\varepsilon = \log(1 / \varepsilon) \rho^{-1}$ instead also guarantees to make error less than $\varepsilon$. We now optimize the bound in $\rho$:
\begin{equation}
    \frac{\esp{T(K_\varepsilon)}}{\log\left(\varepsilon^{-1}\right)} = \rho^{-1} \left(p_{\rm comp} + \tau p_{\rm comm}\right)
\end{equation}
If we rewrite this in terms of $\rho_{\rm comm}$ and $\rho_{\rm comp}$, we obtain:
\begin{equation}
     \frac{\esp{T(K_\varepsilon)}}{\log\left(\varepsilon^{-1}\right)} = \max \left(T_1(p_{\rm comm}), T_2(p_{\rm comm})\right)
\end{equation}
\begin{align}
\label{eq:T1_pcomm}
    &T_1(p_{\rm comm}) = \rho_{\rm comm}^{-1} (p_{\rm comp} + \tau p_{\rm comm}) = C_\comm \left(\tau - 1 + \frac{1}{p_{\rm comm}}\right),\\
    &T_2(p_{\rm comm}) = \rho_{\rm comp}^{-1} (\pcomp + \tau p_{\rm comm}) = C_\comp \left(1 + \tau \frac{\pcomm}{1 - \pcomm}\right).
\end{align}
with $C_\comm^2 = \frac{\kappa_\comm}{\gamma}$ and $C_\comp^2 = 2S^2_{\max}$ which are independent of $p_\comp$ and $p_\comm$.
$T_1$ is a continuous decreasing function of $p_{\rm comm}$ with $T_1 \rightarrow \infty$ when $p_{\rm comm} \rightarrow 0$. Similarly, $T_2$ is a continuous increasing function of $p_{\rm comm}$ such that $T_2 \rightarrow \infty$ when $p_{\rm comm} \rightarrow 1$. Therefore, the best upper bound on the execution time is given by taking $p_{\rm comm} = p^*$ where $p^*$ is such that $T_1(p^*) = T_2(p^*)$ and so $\rho_{\rm comm}(p^*) = \rho_{\rm comp}(p^*)$.
\begin{equation}
    \frac{\esp{T(K_\varepsilon)}}{\log\left(\varepsilon^{-1}\right)} = T_1(p^*)
\end{equation}
Then, $p^*$ can be found by finding the root in $]0, 1[$ of a second degree polynomial. In particular, $p^*$ is the solution of:
\begin{equation}
    p_{\rm comp}^2 = p_\comm^2 \frac{C_\comp}{C_\comm} = (1 - p_{\rm comm})^2
\end{equation}
which leads to
$$p^* = \left( 1 + \frac{C_\comp}{C_\comm}\right)^{-1}.$$
Plugging it back into Equation~\eqref{eq:T1_pcomm}, we get:
\begin{equation}
    \frac{\esp{T(K_\varepsilon)}}{\log\left(\varepsilon^{-1}\right)} = C_\comp + C_\comm \tau ,
\end{equation}
and so:
$$\frac{\esp{T(K_\varepsilon)}}{\log\left(\varepsilon^{-1}\right)} = \sqrt{2}(m + \sqrt{m\kappa_s}) + \tau \sqrt{\frac{\kappa_\comm}{\gamma}},$$
which completes our proof.
\end{proof}

\subsection{Optimality of ADFS}
\label{app:adfs_line_graph}
We know that in the homogeneous setting it is possible to recover $\kappa_\comm = \kappa_b$ and so ADFS is optimal. Yet, the worst-case function used in the proof of Theorem~\ref{thm:lb_general} is such that
\begin{align*}
    &\Sigma_\comm \succcurlyeq \frac{\sigma}{3n}D_n \hbox{ and } \tilde{D}_M \preccurlyeq \frac{L}{n}D_n, \hbox{ with } D_n = {\rm Diag}(n, 2, \cdots, 2) \in \R^n.
\end{align*}
Note that this control on $\tilde{D}_M$ is quite loose for the nodes that are not at the end of the line, but actually yields rather tight results. For a line graph, $A_\comm^\top D_n^{-1} A_\comm = (A_\comm^\top A_\comm - \mu_\comm^2(1 - 2n^{-1})e_{12}e_{12}^\top) / 2$. Therefore, 
\begin{equation}\label{eq:lambda_max_line}
    \lambda_{\max}(A_\comm^\top \Sigma_\comm^{-1} A_\comm) \leq \frac{3n\lambda_{\max}(A_\comm^\top A_\comm)}{2\sigma}.    
\end{equation}
For the second part, we note $P_{n, \alpha}$ the characteristic polynomial of the matrix $A_\comm^\top {\rm Diag}(\alpha, 1, \cdots, 1)A_\comm$, which is such that 
\begin{equation} \label{eq:recursion_P_lambda}
    P_{n+1, \alpha}(\lambda) = (1 + \alpha - \lambda) P_{n, 1}(\lambda) - P_{n-1, 1}(\lambda).    
\end{equation}
Unrolling the recursion for $\alpha = 1$, one can verify that $P_{n,1}(\lambda)$ is of the form:
\begin{equation}
    P_{n, 1}(\lambda) =  \frac{\sin(n\theta)}{\sin \theta}, \hbox{ with } \cos(\theta) = 1 - \frac{\lambda}{2},
\end{equation}
where we recall that $n$ is the number of nodes of the graph. Therefore, $P_{n,1}(\lambda)$ has a simple expression, and its roots recover the standard eigenvalues for the line graph, which are $\lambda_k = 2(1 - \cos(k \pi/n))$ for $0 \leq k < n$. We are interested in the roots of $P_{n, \alpha}$, which we express as:
\begin{align*}
    P_{n, \alpha}(\lambda) &= P_{n, 1}(\lambda) - (1 - \alpha) P_{n-1, 1}(\lambda)\\
    &= \frac{\sin(n\theta) - (1 - \alpha) \sin((n-1)\theta)}{\sin(\theta)}.
\end{align*}
Recall that we consider $0 \leq \alpha \leq1$, so if $\theta$ is a root of $P_{n, \alpha}$ then $\sin(n\theta) \leq \sin((n-1)\theta)$. Yet, the $\sin$ function is increasing on $[0, \pi /2]$, and thus we deduce that the roots of $P_{n, \alpha}$ are such that $\theta \geq \frac{\pi}{2n}$. Therefore, $\theta_{\min}^+(\alpha) \geq \theta_{\min}^+(1) / 2$, which implies that:
\begin{equation}\label{eq:lambda_min_line}
    \lambda_{\min}^+(A_\comm^\top \tilde{D}_M^{-1} A_\comm) = O\left(n \frac{\lambda_{\min}^+(A_\comm^\top A_\comm)}{L}\right).
\end{equation}
In the end, we combine Equations~\eqref{eq:lambda_max_line} and~\eqref{eq:lambda_min_line} and obtain that $\kappa_\comm = O\left(\kappa_b\right)$, and so the lower bound is matched up to constants in this setting. Note that we obtain $\kappa_b$ and not $\kappa_s$ because 
$$\max_{i \in \{1, \cdots, n\}} \lambda_{\max}\left(\sum_{j=1}^m L_{ij} P_{ij}\right) = \max_{i \in \{1, \cdots, n\}, j \in \{1, \cdots, m\}} L_{ij}$$
in this case.
\section{Efficient versions of ADFS}
\label{app:adfs_efficient}
We present in this section the efficient versions of Algorithm~\ref{algo:sc_adfs} and Algorithm~\ref{algo:ns_adfs}. These versions get rid of the convex combinations that can be very costly in a high dimensional setting. Instead, all the local updates can take advantage of the sparsity of the updates. Yet, full-dimensional updates are still required for communications. 

At first glance, a full dimensional operation seems to be required to perform the gradient step of Line 4. Yet, in Algorithm~\ref{algo:generalized_apcg_eff_sc}, $g_t$ is only used inside of the proximal term of Line $10$. Since $f_{ij}^*(x) = + \infty$ if $x \notin {\rm Ker}(f_{i,j})^\perp$ then only the projection of $g_t^{(i,j)}$ onto ${\rm Ker}(f_{i,j})^\perp$ actually matter. In particular, only $X_{i,j}^\top  g_t$ matters if $f_{i,j}(x) = \ell(X_{i,j}^\top x)$. Therefore, computations can dramatically be reduced if $X_{i,j}$ is sparse.

\begin{algorithm}
\caption{ADFS-EFFICIENT$\left(A, (\sigma_i), (L_{i,j}), (\mu_{k\ell}), (p_{k\ell}), \rho \right)$}
\label{algo:sc_adfs_eff}
\begin{algorithmic}[1]
\STATE $\sigma_A = \lambda_{\min}^+(A^\top \Sigma^{-1}A)$, $\tilde{\eta}_{k\ell} = \frac{\rho}{\sigma_A}$, $W_b = AP_b^\dagger A^\top $, $\tilde{W}_b = AP_b^\dagger A^\dagger$ $\phi = $\COMMENT{Initialization}
\STATE $x_0 = y_0 = z_0 = 0^{(n + nm) \times d}$ 
\FOR[Run for $K$ iterations]{$t=0$ to $K-1$}
\STATE Sample set of edges $b$ \COMMENT{Edges sampled from the augmented graph}
\STATE $g_t = \eta W_b \Sigma^{-1}(\phi^{t+1} u_t + z_t)$ \COMMENT{Communication abstracted by the matrix $W_b$.}
\IF{$b$ is a set of virtual edges}
\FOR{$i = 1$ to $n$}
\FOR{$j$ such that $(i,j) \in b$}
\STATE $w_t^{(i,j)} = -\phi^{t+1}u_t^{(i,j)} + z_t^{(i,j)}$
\STATE $h_{t+1}^{(i,j)} = {\rm prox}_{\eta  \mu_{ij}^2 p_{ij}^{-1} \tilde{f^*_{ij}}}\left(w_t^{(i,j)} - g_t^{(i,j)}\right) - w_t^{(i,j)}$ 
\STATE $h_{t+1}^{(i)} = - \sum_{j, (i,j) \in b} h_{t+1}^{(i,j)}$ \COMMENT{Center node update}
\ENDFOR 
\ENDFOR
\ELSE
\STATE $h_t = - g_t$
\ENDIF

\STATE $u_{t+1} = u_t - \frac{I - \rho \tilde{W}_b}{2\phi^{t+1}}h_t, \ \ \ z_{t+1} = z_t + \frac{I + \rho \tilde{W}_b}{2}h_t$
\ENDFOR
\STATE \textbf{return} $\theta_K =  \Sigma^{-1} (\phi^{K+1}u_K + z_K)$ \COMMENT{Return primal parameter}
\end{algorithmic}
\end{algorithm}

\begin{algorithm}
\caption{NS-ADFS}
\label{algo:ns_adfs_eff}
\begin{algorithmic}[1]
\STATE $u_0 = v_0 = 0^{(n + nm) \times d}$, $t = 0$, $\alpha_0 = \min_{i, \psi_i \neq 0} p_i$, $\eta_t = \frac{1}{\alpha_t S^2}$, $W_b = AP_b^\dagger A^\top $, $\tilde{W}_b = AP_b^\dagger A^\dagger$ \COMMENT{Initialization} 
\FOR[Run for $K$ iterations]{$t=0$ to $K-1$}
\STATE Sample set of edges $b$ \COMMENT{Edges sampled from the augmented graph}
\STATE $g_t = \eta_t W_b \Sigma^{-1}(\alpha_t^2 u_t + z_t)$ \COMMENT{Communication abstracted by the matrix $W_b$.}
\IF{$b$ is a set of virtual edges}
\FOR{$i = 1$ to $n$}
\FOR{$j$ such that $(i,j) \in b$}
\STATE $h_{t+1}^{(i,j)} = {\rm prox}_{\eta_t \mu_{ij}^2 p_{ij}^{-1} \tilde{f^*_{ij}}}\left(z_t^{(i,j)} - g_t^{(i,j)}\right) - z_t^{(i,j)}$ 
\STATE $h_{t+1}^{(i)} = - \sum_{j, (i,j) \in b} h_{t+1}^{(i,j)}$ \COMMENT{Center node update}
\ENDFOR 
\ENDFOR
\ELSE
\STATE $h_t = - g_t$
\ENDIF

\STATE $u_{t+1} = u_t - \frac{I - \alpha_t \tilde{W}_b}{2\alpha_t^2}h_t, \ \ \ z_{t+1} = z_t + h_t$
\STATE $\alpha_{t+1} = \frac{\sqrt{\alpha_t^4 + 4\alpha_t^2} - \alpha_t^2}{2}$
\ENDFOR
\STATE \textbf{return} $\theta_K =  \Sigma^{-1} (\alpha_t^2 u_K + z_K)$ \COMMENT{Return primal parameter}

\end{algorithmic}
\end{algorithm}

\section{Experimental setting}
\label{app:experimental_setting}

We detail in this section the exact experimental setting in which simulations were made. All algorithms used out-of-the-box parameters given by theory. Batch algorithms were given the exact $\kappa_b$. The datasets we used are the first million samples of the Higgs dataset (11 million samples and 28 attributes) and the Covtype.binary.scale dataset (581,012 samples and 54 attributes). Both datasets are available at \url{https://www.csie.ntu.edu.tw/~cjlin/libsvmtools/datasets/binary.html}. To obtain the local dataset $X_i \in \mathbb{R}^{m \times d}$ of each node, we drew $m$ samples at random from the base dataset, so that datasets of different nodes may overlap. We used the logistic loss with quadratic regularization, meaning that the function at node~$i$ is:
$$f_i: \theta_i \mapsto \sum_{j=1}^m \log\left(1 + \exp(-l_{i,j} X_{i,j}^\top \theta_i)\right) + \frac{\sigma_i}{2}\|\theta_i \|^2,$$
where $l_{i,j} \in \{-1, 1\}$ is the label associated with $X_{i,j}$, the $k$-th sample of node $i$. We chose $m = 10^4$ and $\sigma = 1$ for all simulations. Note that local functions are not normalized (not divided by $m$) so this actually corresponds to a regularization value of $\sigma_i = 10^{-4}$ with usual formulations. Computation delays were chosen constant equal to $1$ and communication delays constant equal to $5$.

Plots are shown for \emph{idealized times} in order to abstract implementation details as well as ensure that reported timings were not impacted by the cluster status (available bandwidth for example). This means that we counted $1$ unit of time for each computation step and $\tau$ for each communication step. The same setting as described in~\citet{hendrikx2019accelerated} was used for the locally synchronous version of ADFS, so nodes perform a schedule and are considered free to start the next iteration as soon as they send their a gradient as long as they already received the neighbor's gradient (non-blocking send). Note that although Algorithm~\ref{algo:sc_adfs} returns vector $\Sigma^{-1}v_t$ to compute the error, we used the vector $\Sigma^{-1}y_t$ instead. Both have similar asymptotic convergence rates but the error was more stable using $\Sigma^{-1}y_t$. The error that we plot is the average error over all nodes at a given time. More specifically, all nodes compute the error at specific iteration number as $F(\Sigma^{-1}y_t)$. Then, we average all these errors and the time reported is the time at which the last node finishes this iteration.

Similarly to Table~\ref{fig:table_speeds}, we assume that computing the dual gradient of a function $f_i$ is as long as computing $m$ proximal operators of $f_{i,j}$ functions. This greatly benefits to MSDA since in the case of logistic regression, the proximal operator for one sample has no analytic solution but can be efficiently computed by solving a one-dimensional optimization problem~\citep{shalev2013stochastic}, for example using Newton Method. The inner problem corresponding to computing $\nabla f_i^*$ was solved by performing $1000$ steps of accelerated gradient descent. For Point-SAGA, ADFS and DSBA, 1D prox were computed using 10 steps of Newton's method (in one dimension). Both used warm-starts, \emph{i.e.} the initial parameter for these inner problems was the solution for the last time the problem was solved. The step-size $\alpha$ of DSBA was chosen as $1 / (4L_{\max})$ instead of $1 / (24L_{\max})$ where $L_{\max} = \max_{i,j} L_{i,j}$ (it was unstable for larger $\alpha$).

\end{document}